\newcommand{\bbC}{{\mathbb{C}}}
\newcommand{\bbN}{{\mathbb{N}}}
\newcommand{\bbR}{{\mathbb{R}}}
\newcommand{\calM}{{\mathcal M}}
\newcommand{\calN}{{\mathcal N}}
\newcommand{\calZ}{{\mathcal Z}}
\newcommand{\la}{\left\langle}
\newcommand{\ra}{\right\rangle}
\newcommand{\supp}{\text{\rm{supp}}}
\newcommand{\beq}{\begin{equation}}
\newcommand{\eeq}{\end{equation}}
\newcommand{\ba}{\begin{align*}}
\newcommand{\ea}{\end{align*}}
\newcommand{\abs}[1]{\lvert#1\rvert}
\DeclareFontFamily{U}{mathx}{}
\DeclareFontShape{U}{mathx}{m}{n}{<-> mathx10}{}
\DeclareSymbolFont{mathx}{U}{mathx}{m}{n}
\DeclareMathAccent{\widehat}{0}{mathx}{"70}
\DeclareMathAccent{\widecheck}{0}{mathx}{"71}
\numberwithin{equation}{section}
\newtheorem{thm}{Theorem}[section]
\newtheorem{lem}[thm]{Lemma}
\newtheorem{lemma}[thm]{Lemma}
\newtheorem{cor}[thm]{Corollary}
\newtheorem{prop}[thm]{Proposition}
\newtheorem{rem}[thm]{Remark}
\newtheorem{defn}[thm]{Definition}
\begin{document}
\title[Zeros of MOPRL: location and interlacing]{Zeros of multiple orthogonal polynomials: location and interlacing}
\author{Rostyslav Kozhan and Marcus Vaktnäs}
\date{\today}
\begin{abstract}
	We prove a criterion on the possible locations of zeros of type I and type II multiple orthogonal polynomials in terms of normality of degree $1$ Christoffel transforms. 
    We provide another criterion in terms of degree $2$ Christoffel transforms for establishing zero interlacing of the neighbouring  multiple orthogonal polynomials of type I and type II.



    We apply these criteria to establish zero location and interlacing of type I multiple orthogonal polynomials for Nikishin systems. Additionally, we  recover the known results on zero location and interlacing for type I multiple orthogonal polynomials for Angelesco systems, as well as for type II multiple orthogonal polynomials for Angelesco and AT systems. 

    Finally, we demonstrate that normality of the higher order Christoffel transforms naturally related to the zeros of the Wronskians of consecutive orthogonal polynomials.
\end{abstract}
\maketitle
\section{Introduction}

Given a set $\Gamma$ on the real line $\bbR$, 
let  $\calM(\Gamma)$ be the set of all finite Borel measures $\mu$ of constant sign, 
with all the moments finite, and with an infinite support  $\supp\,\mu\subseteq \Gamma$. 
For any $\mu \in\mathcal{M}(\bbR)$, its orthogonal polynomial of degree $n\in \bbN:=\{0,1,2,\ldots\}$ is defined to be the monic polynomial $P_n$ of exact degree $n$ satisfying
\begin{equation}\label{eq:OPRL}
		\int P_{n}(x)x^k d\mu(x) = 0,\qquad k = 0,1,\dots,n-1.
\end{equation} 
It is easy to see that such a polynomial always exists and is unique. The two basic yet fundamental properties of the zeros of orthogonal polynomials $P_n$ are:
\begin{enumerate}[(i)]
    \item All zeros of $P_n$ are real, simple, and belong to the interior of the convex hull of $\supp\,\mu$;
    \item Zeros of two consecutive polynomials $P_n$ and $P_{n+1}$ interlace, that is, between every two consecutive zeros of one of the polynomials there lies exactly one zero of the other polynomial.
\end{enumerate}
For more details on the theory of orthogonal polynomials on the real line, see, e.g.,~\cites{Chihara,Ismail,SimonL2,OPUC1,SzegoBook}. 

\smallskip

Let us now define multiple orthogonal polynomials.  For an introduction to the theory, see \cites{Aptekarev,Ismail,Nikishin,Applications}.
Let $\bm{\mu} = (\mu_1,\dots,\mu_r)$, $r\ge 1$, be a system of (potentially complex) measures with 
all finite moments.
We write $\bm{n}$ for multi-indices $(n_1,\dots,n_r) \in \bbN^r$, and $\abs{\bm{n}} = n_1 + \ldots + n_r$. 

A type I multiple orthogonal polynomial at a location $\bm{n}\in\bbN^r$ is a non-zero vector of polynomials $\bm{A}_{\bm{n}} = (A_{\bm{n}}^{(1)},\dots,A_{\bm{n}}^{(r)})$ such that $\deg{A_{\bm{n}}^{(j)}} \leq n_j - 1$, $j=1,\ldots,r$, and
\begin{equation}\label{eq:moprlI}
	\sum_{j = 1}^r \int {A_{\bm{n}}^{(j)}(x) x^k} d\mu_j(x)
    =
    \begin{cases}
            0, & \mbox{for } k = 0,1,\dots,\abs{\bm{n}}-2, \\
            1, & \mbox{for } k = \abs{\bm{n}}-1
        \end{cases}
\end{equation}
(if $r=1$, then this reduces to $P_{n-1}$ from~\eqref{eq:OPRL} but with a non-monic normalization). 

A type II multiple orthogonal polynomial for $\bm{\mu}$ at the location $\bm{n}$ is a monic polynomial $P_{\bm{n}}(x)$ 
        of exact degree $\abs{\bm{n}}$ such that
	\begin{equation}\label{eq:moprlII}
		\int P_{\bm{n}}(x)x^k d\mu_j(x) = 0,\qquad k = 0,1,\dots,n_j-1 ,\qquad j = 1,\dots,r
	\end{equation} 
(if $r=1$, then this reduces to $P_n$ in~\eqref{eq:OPRL}).
    
We say that an index $\bm{n}$ is normal with respect to $\bm{\mu}$ if $P_{\bm{n}}$ exists and is unique. It is easy to show that this is equivalent to the existence and uniqueness of $\bm{A}_{\bm{n}}$ (see Section~\ref{ss:MOPRLnormality} for details). A system is called perfect if every $\bm{n}\in\bbN^r$ is normal. 

If $r\ge 2$, it is not easy to identify perfect systems, but there are several wide classes of systems which are known to be perfect. These are Angelesco, AT, and Nikishin (see Sections~\ref{ss:Angelesco}, ~\ref{ss:AT}, ~\ref{ss:Nikishin}, for the definitions). In particular, all the known multiple orthogonality analogues of the classical orthogonal polynomials fall within these categories.

We are interested in studying properties of the zeros of type I and type II multiple orthogonal polynomials, specifically their possible locations and interlacing properties. The notion of the (multiple) Christoffel transform is crucial for our analysis. 

Given a measure $\mu$ and a point $z_0\in\bbC$, a one-step Christoffel transform of $\mu$ is the new 
measure $\widehat{\mu}$ defined by 
\begin{equation}\label{eq:Christoffel}
    \int f(x) d\widehat\mu(x) = 
    \int f(x) ({x-z_0}) d\mu(x).
\end{equation}
We denote such a measure by $(x-z_0)\mu$.

Our first main result is the criterion (Theorem~\ref{thm:zerosII}) that says that $z_0$ is a zero of $P_{\bm{n}}$ if and only if the index $\bm{n}$ is not normal for the system
\begin{equation}\label{eq:mutipleChr1}
    ((x-z_0)\mu_1,\dots,(x-z_0)\mu_r).
\end{equation}
Similarly (Theorem~\ref{thm:zerosI}), $z_0$ is a zero of $A^{(1)}_{\bm{n}}$ if and only if the index $\bm{n}$ is not normal for the system
\begin{equation}\label{eq:mutipleChr2}
    ((x-z_0)\mu_1,\mu_2\dots,\mu_r)
\end{equation}
(and similarly for any other $A_{\bm{n}}^{(j)}$). 

Normality of indices for the systems~\eqref{eq:mutipleChr1} and~\eqref{eq:mutipleChr2} are often easy to check, which provides the set free from zeros of the orthogonal polynomials. 
In particular, it can be applied to establish the possible locations of zeros of Angelesco (for type I and type II polynomials), AT (for type II polynomials only, as for type I nothing general can be true), and Nikishin systems (for type I and type II polynomials). All of these results are well known, except for the case of type I polynomials for Nikishin systems (Theorem~\ref{thm:NikishintypeI}), which seems to be new (but see~\cite{LMF} for a related study on real-rootedness). The distinctive feature is that these zeros all appear on a set  disjoint from the supports of the orthogonality measures, which is unlike any other case.


The second result of the paper is the criterion (Theorem~\ref{thm:interlacing type II})  that says that two neighbouring polynomials $P_{\bm{n}}$ and $P_{\bm{n}+\bm{e}_\ell}$ have interlacing zeros
if and only if the index $\bm{n}$ is normal for the system
\begin{equation}
    ((x-z_0)^2\mu_1,\dots,(x-z_0)^2\mu_r)
\end{equation}
for every $z_0\in\bbR$ (assuming real-rootedness of $P_{\bm{n}}$
). We would like to mention that one side of this equivalence was already observed by~\cite{HanVA}, and our proof is in essence distilled from their paper.

A dual result to this is Theorem~\ref{thm:interlacing type I}, which states that $A^{(1)}_{\bm{n}}$ and $A^{(1)}_{\bm{n}-\bm{e}_\ell}$ have interlacing zeros if and only if the index $\bm{n}-2\bm{e}_1$ is normal for the system
\begin{equation}
    ((x-z_0)^2\mu_1,\mu_2\dots,\mu_r)
\end{equation}
for every $z_0\in\bbR$ (assuming real-rootedness of $A_{\bm{n}}^{(1)}$), and similarly for $A_{\bm{n}}^{(j)}$ with any other $j=2,\ldots,r$. 

We then apply these criteria to Angelesco and Nikishin systems to obtain interlacing for both type I and type II polynomials. To the best of our knowledge interlacing for type I neighbouring polynomials for Nikishin systems (Theorem~\ref{thm:NikishintypeInterlacingI}) is new. 

\smallskip

The organization of the paper is as follows. Section~\ref{ss:prelim} contains a collection of results that are needed for the proof. In Section~\ref{ss:MOPRL} we remind the reader the definitions of Angelesco, AT, and Nikishin systems. In Section~\ref{ss:location} we deal with location of zeros and in Section~\ref{ss:interlacing} with interlacing. Section~\ref{ss:Wronskians} has a short discussion on the Christoffel transforms of higher order and their connection to Wronskians of consecutive orthogonal polynomials.

In order to keep the paper self-contained and friendly for the general audience, we present in Section~\ref{ss:MOPRL} elementary proofs of perfectness for Angelesco and AT systems that does not involve zero counting, inspired by the arguments of Kuijlaars~\cite{Kui} and Coussement--Van Assche~\cite{CouVA}. For this we use a generalization of the Andreief identity~\eqref{eq:Andreief} from~\cite{KVMLOPUC}.


Interlacing property of the zeros of the neighbouring type II polynomials for Angelesco systems was shown in~\cite{HanVA} 
(and more generally for any systems with positive $a_{\bm{n},j}$-recurrence coefficients), see also~\cite{ADY20}. Interlacing for type I was proved in~\cite{FMM}, see also~\cite{DenYat}. Interlacing of the zeros of the neighbouring type II polynomials for Nikishin systems was shown in~\cite{FidIllLop04}, and for AT systems in~\cites{Ker70,FidIllLop04,HanVA}. Some other types of interlacing properties of multiple orthogonal polynomials were investigated in~\cites{FLLS,ALR,AKVI,KV1,dosSan,MarMorPer,MarMor24,FMM,Lop21}.


\subsubsection*{Acknowledgements} The authors are grateful to A. Mart\'{i}nez-Finkelshtein for the help with some of the references.

\section{Preliminaries}\label{ss:prelim}
\subsection{A generalization of the Andreief identity}
\hfill\\

In Section~\ref{ss:MOPRL} we use the following generalization of Andreief identity (which is the special case of~\eqref{eq:Andreief} with $M=N$). It is particularly well suited for matrices with block structure which results in simple proofs of perfectness of Angelesco and AT systems. It can also be applied to Nikishin systems~\cite{CouVA,Kui,KNikishin}, as well as Angelesco, AT, and Nikishin systems on the unit circle, see~\cite{KVMLOPUC,KNikishin}.

\begin{prop}[\cite{KVMLOPUC}]\label{prop:Andreief}
    Let $\phi_j,\psi_j \in L^2(\mu)$ for some probability measure on a measure space $(X,\Sigma,\mu)$.  Then for any $N\ge M\ge 1$  and any $(N-M)\times N$ matrix $\bm{A}$:
    \begin{multline}\label{eq:Andreief}
        \det\left(
        \begin{array}{@{}c@{}}
        \bm{A} \\
        \hline
        \begin{pmatrix}
            \int_X \phi_j(x) \psi_k(x)\,d\mu(x)
        \end{pmatrix}_{1\le j \le M, 1\le k\le N}
        \end{array}
        \right) =
        \\
        \tfrac{1}{M!}
         \int_{X^M}
        \det\left(
        \begin{array}{@{}c@{}}
        \bm{A} \\
        \hline 
        \begin{pmatrix}
            \psi_k(x_j)
        \end{pmatrix}_{1\le j \le M, 1\le k\le N}
        \end{array}
        \right)
        \det\left(\phi_l(x_j)\right)_{1\le l,j \le M} \,d^M\mu(\bm{x}),
    \end{multline}
    where $d^M\mu(\bm{x}):=d\mu(x_1)\ldots d\mu(x_M)$. 
\end{prop}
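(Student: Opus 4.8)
The plan is to prove the generalized Andreief identity~\eqref{eq:Andreief} by expanding the right-hand side using multilinearity of the determinant in the rows coming from the integrand, and then recognizing the resulting sum as a signed permutation sum that reassembles into the left-hand side. Concretely, I would first expand the Vandermonde-type factor $\det\left(\phi_l(x_j)\right)_{1\le l,j\le M} = \sum_{\sigma\in S_M}\sgn(\sigma)\prod_{j=1}^{M}\phi_{\sigma(j)}(x_j)$, and likewise expand the big determinant on the right along its last $M$ rows via the Laplace/cofactor expansion or, more cleanly, via the Leibniz formula restricted to those rows. Plugging both expansions into the $M$-fold integral, Fubini lets me pull each integration $\int_X \phi_{\sigma(j)}(x_j)\,\psi_k(x_j)\,d\mu(x_j)$ inside, which manufactures exactly the matrix entries $\bigl(\int_X\phi_j(x)\psi_k(x)\,d\mu(x)\bigr)$ appearing on the left-hand side — modulo the bookkeeping of which column index each $\psi_k$ carries.

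The key combinatorial step is to organize this: after integrating, the right-hand side becomes $\tfrac{1}{M!}\sum_{\sigma\in S_M}\sgn(\sigma)\,\det(\ldots)$ where the determinant now has $\bm{A}$ on top and, on the bottom, rows whose $(j,k)$ entry is $\int_X \phi_{\sigma(j)}(x)\psi_k(x)\,d\mu(x)$. But permuting the $M$ integration variables (equivalently, relabeling the bottom rows by $\sigma^{-1}$) shows every term in the $\sigma$-sum equals the same determinant — the one with $(j,k)$ entry $\int_X\phi_j(x)\psi_k(x)\,d\mu(x)$ — up to a sign $\sgn(\sigma)$ that is cancelled by the row permutation's sign, so the $M!$ identical terms collapse against the $\tfrac{1}{M!}$ prefactor. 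This yields precisely the left-hand side of~\eqref{eq:Andreief}. An alternative, perhaps cleaner packaging: apply the classical Cauchy–Binet / Andreief identity in the form $\tfrac{1}{M!}\int_{X^M}\det(f_k(x_j))\det(g_l(x_j))\,d^M\mu = \det\bigl(\int f_k g_l\,d\mu\bigr)$ blockwise, treating the top block $\bm{A}$ as constant rows that pass through the expansion unchanged; since this special case ($M=N$, no $\bm{A}$) is stated as known, one could cite it and only verify the extension handling the rectangular block $\bm{A}$.

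The main obstacle — really the only non-routine point — is the careful sign and index bookkeeping when the top block $\bm{A}$ is present and rectangular: one must check that expanding the determinant "along the last $M$ rows" (Laplace expansion on the $M$-row band) produces signs consistent with what the Leibniz formula gives, and that the cofactors of $\bm{A}$ (which are $(N-M)\times(N-M)$ minors) match up correctly between the two sides. The cleanest way to sidestep delicate Laplace-sign arithmetic is to invoke multilinearity directly: regard both sides as multilinear alternating functions of the $M$ "integrand rows" (each row being a vector indexed by $k=1,\dots,N$, namely $(\psi_k(x_j))_k$ on the right after integration becomes $(\int\phi_j\psi_k\,d\mu)_k$), note that an alternating multilinear functional of $M$ vectors in the presence of a fixed complementary block $\bm{A}$ is determined by its values on standard basis tuples, and check the identity on those — where it reduces to the known $\bm{A}$-free case. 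I would verify the claim holds on basis inputs and let multilinearity finish the proof, remarking that the $\tfrac{1}{M!}$ is exactly the normalization needed because the right-hand integral sums over all $M!$ orderings of the variables.
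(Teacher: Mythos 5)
The paper does not actually prove Proposition~\ref{prop:Andreief}; it is quoted verbatim from~\cite{KVMLOPUC}, so there is no in-paper proof to compare against. Judged on its own merits, your argument is correct and is the standard Andreief-type proof: expand $\det(\phi_l(x_j))_{1\le l,j\le M}=\sum_{\sigma\in S_M}\sgn(\sigma)\prod_{j}\phi_{\sigma(j)}(x_j)$, use the fact that the big determinant under the integral is multilinear in its $M$ bottom rows, each of which depends only on the single variable $x_j$, integrate row by row via Fubini to produce the entries $\int_X\phi_{\sigma(j)}(x)\psi_k(x)\,d\mu(x)$ with the block $\bm{A}$ passing through untouched, and then observe that undoing the row permutation contributes a factor $\sgn(\sigma)$ which cancels the sign from the Leibniz expansion, so all $M!$ terms coincide and absorb the prefactor $\tfrac{1}{M!}$. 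You correctly identify that the only delicate point is the sign bookkeeping in the presence of $\bm{A}$, and your resolution --- invoke multilinearity in the bottom rows directly rather than a Laplace expansion along the $M$-row band --- is exactly the clean way to do it. The only part I would not lean on is the side remark about checking the identity ``on standard basis tuples'': the bottom rows are constrained to have the form $\bigl(\int_X\phi_j\psi_k\,d\mu\bigr)_k$, so that reduction is not literally available; but since your primary argument is complete, this alternative packaging is dispensable.
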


The matrix on the left-hand side of ~\eqref{eq:Andreief} is the $N\times N$ matrix whose upper  $(N-M)\times N$ block is $\bm{A}$ and whose lower
    $M\times N$ block is the matrix $\begin{pmatrix}
            \int_X \phi_j(x) \psi_k(x)\,d\mu(x)
        \end{pmatrix}_{1\le j \le M, 1\le k\le N}$.



\subsection{m-functions and Christoffel 
transforms}\label{ss:mFunctions}
\hfill\\

The $m$-function of $\mu$ is an analytic function on $\bbC\setminus\supp\,\mu$ defined by
\begin{equation}\label{eq:m}
	m_\mu(z) := \int_{\bbR} \frac{d\mu(x)}{x-z}.
\end{equation}
Assuming $\mu$ is a probability measure, $m$ satisfies the following relation, see e.g.,~\cite[Thm 3.2.4]{SimonL2},
\begin{equation}\label{eq:strippingM}
	\frac{1}{m_{\mu}(z)} = b_1-z- a_1^2 m_{\mu^{(1)}}(z),
\end{equation}
for some $a_1,b_1 \in \bbR$ with $a_1>0$, where
\begin{equation}
	m_{\mu^{(1)}}(z) = \int_{\bbR} \frac{d\mu^{(1)}(x)}{x-z} 
\end{equation}
is the $m$-function of another probability measure $\mu^{(1)}$ (which is the spectral measure of the Jacobi operator of $\mu$ but with the first row and column stripped). Using \eqref{eq:strippingM}, it is easy to show that if $\supp\,\mu$ belongs to some interval $\Gamma$ then $\supp\,\mu^{(1)}$ also belongs to $\Gamma$. 

It is an easy exercise to see that the $m$-function ~\eqref{eq:m} $m_{\widehat{\mu}}(z)$ of the Christoffel transform $\widehat{\mu}=(x-z_0)\mu$ (recall~\eqref{eq:Christoffel}) satisfies
\begin{equation}\label{eq:mChristoffel}
    m_{\widehat{\mu}}(z)
    =
    \int \frac{(x-z_0) d\mu(x)}{x-z} = \int d\mu(x) + (z-z_0) m_\mu(z).
\end{equation}



\subsection{Zeros, real-rootedness, and interlacing}
\hfill\\

Given a polynomial $p(z)$, let $\calZ[p]$ stand for the set of its zeros. 
We say that a polynomial is real-rooted if 
$\calZ[p]\subset\bbR$. 

We say that zeros of two real polynomials $p(x)$ and $q(x)$ interlace, and we write $p(x)\sim q(x)$, if 
all zeros of $p(x)$ and $q(x)$ are real, simple, and between every two consecutive zeros of (any) one of the polynomials there lies exactly one zero of the other (here we mean {\it strict} interlacing, i.e., $\calZ[p] \cap \calZ[q] = \varnothing$). 

Recall that a Wronskian of $\ell$ polynomials $Q_1(x),\ldots,Q_\ell(x)$ is 
\begin{equation}\label{eq:wronskian}
    W(Q_1,\ldots,Q_\ell;x):=
        \det \begin{pmatrix}
            Q_{1}(x) & Q_{2}(x) & \ldots & Q_{
            \ell}(x) \\
            Q'_{1}(x) & Q'_{2}(x) & \ldots & Q'_{\ell}(x) \\
            \vdots & \vdots & \ddots & \vdots \\
            Q^{(\ell-1)}_{1}(x) & Q^{(\ell-1)}_{2}(x) & \ldots & Q^{(\ell-1)}_{\ell}(x) 
        \end{pmatrix}.
\end{equation}

To check interlacing we need to following standard result.

\begin{lem}\label{lem:wronskian interlacing}
    Let $Q$ be a real-rooted polynomial and $P$ be any real polynomial. Then $P \sim Q$ if and only if $W(P,Q;z_0) \neq 0$ for all $z_0 \in \bbR$ and $\deg P\le \deg Q+1$. 
\end{lem}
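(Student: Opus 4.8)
The plan is to work throughout with the explicit form $W(P,Q;x)=P(x)Q'(x)-P'(x)Q(x)$ together with the identity
\[
    \left(\frac{P}{Q}\right)'(x)=-\frac{W(P,Q;x)}{Q(x)^2},\qquad Q(x)\ne 0.
\]
Since a real polynomial with no real zero has a fixed sign on $\bbR$, the condition ``$W(P,Q;z_0)\ne 0$ for all $z_0\in\bbR$'' is equivalent to ``$W(P,Q;\cdot)$ has a constant nonzero sign on $\bbR$'', and by the displayed identity this is in turn equivalent to $P/Q$ being \emph{strictly} monotone on each open interval of $\bbR\setminus\calZ[Q]$. This equivalence is the engine of both directions.

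For $(\Leftarrow)$, assume $W(P,Q;\cdot)\ne 0$ on $\bbR$ and $\deg P\le\deg Q+1$. First, $Q$ has only simple zeros: a multiple real zero $x_0$ would give $Q(x_0)=Q'(x_0)=0$, hence $W(P,Q;x_0)=0$. Write $\calZ[Q]=\{x_1<\dots<x_n\}$, $n=\deg Q$. From $W(P,Q;x_i)=P(x_i)Q'(x_i)$ we get $P(x_i)\ne 0$, so $P/Q$ has a genuine simple pole at each $x_i$. On each bounded interval $(x_i,x_{i+1})$ the function $P/Q$ is strictly monotone and blows up at both ends; a strictly monotone function cannot tend to $+\infty$ (or to $-\infty$) at \emph{both} ends of an interval, so the limits are $+\infty$ and $-\infty$, and $P/Q$ has exactly one zero there. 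This yields $n-1$ distinct real zeros of $P$, one per gap, and every real zero of $P$ is simple (else $P(z_0)=P'(z_0)=0$ forces $W(P,Q;z_0)=0$). It remains to locate the at most two remaining zeros according to whether $\deg P$ is $n-1$, $n$, or $n+1$; this is done by the same monotonicity principle on the unbounded intervals $(-\infty,x_1)$ and $(x_n,\infty)$, using that $P/Q$ tends to a finite nonzero limit when $\deg P=n$, to $0$ when $\deg P=n-1$, and to $\pm\infty$ when $\deg P=n+1$. In each case one checks the number of real zeros of $P$ found equals $\deg P$, so $P$ is real-rooted, its zeros strictly interlace those of $Q$, and strictness is ensured by $P(x_i)\ne 0$ and simplicity.

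For $(\Rightarrow)$, assume $P\sim Q$. Strict interlacing of real-rooted polynomials already forces $\deg P\le\deg Q+1$, so that part of the claim is free. At a zero $x_i$ of $Q$ we have $W(P,Q;x_i)=P(x_i)Q'(x_i)\ne 0$ (simplicity plus strictness), and symmetrically $W\ne 0$ at the zeros of $P$. Off $\calZ[P]\cup\calZ[Q]$, use the partial fraction expansion $P/Q=(ax+b)+\sum_i\frac{r_i}{x-x_i}$ (linear part present only when $\deg P=\deg Q+1$), with $r_i=P(x_i)/Q'(x_i)$. Strict interlacing makes both $P(x_i)$ and $Q'(x_i)$ alternate in sign with $i$, so all residues $r_i$ share one sign $\epsilon$; and when the linear part is present, $P/Q$ changes sign on $(x_n,\infty)$ (it has a zero there) from $\epsilon\cdot\infty$ to $\operatorname{sign}(a)\cdot\infty$, forcing $\operatorname{sign}a=-\epsilon$. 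Then $(P/Q)'=a-\sum_i\frac{r_i}{(x-x_i)^2}$ is a sum of terms all of sign $-\epsilon$, hence never $0$, so $W=-Q^2(P/Q)'$ is nonzero off $\calZ[Q]$, completing the proof.

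The only real friction is the case analysis in $(\Leftarrow)$ over the three admissible values of $\deg P$, and the matching point in $(\Rightarrow)$ of pinning down $\operatorname{sign}a$ relative to the common residue sign $\epsilon$; both collapse to the elementary fact that a strictly monotone function takes opposite infinite limits at the two ends of an interval.
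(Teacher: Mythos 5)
Your proof is correct, and it rests on the same identity $W(P,Q;x)=-Q(x)^2(P/Q)'(x)$ as the paper's, but both directions are carried out by slightly different means. In the direction assuming $W\neq 0$ on $\bbR$, the paper reads off from the constant sign of $W(P,Q;x_j)=P(x_j)Q'(x_j)$ that $P$ has an odd number of zeros in each gap between consecutive zeros of $Q$, and then needs a role-swapping parity argument plus a separate asymptotic sign computation of $W$ to reduce the count to exactly one per gap and to place the extra zeros when $\deg P=\deg Q+1$; you instead use strict monotonicity of $P/Q$ on each component of $\bbR\setminus\calZ[Q]$, which yields \emph{exactly} one zero per bounded gap in one stroke and handles the unbounded intervals by the same principle (the residual three-way case analysis on $\deg P$ closes correctly, since monotonicity caps each unbounded interval at one zero and parity of non-real zeros supplies existence). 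In the converse direction, the paper works under $\deg Q\ge\deg P$ and disposes of $\deg P=\deg Q+1$ by swapping $P$ and $Q$ (using $W(Q,P)=-W(P,Q)$), whereas you retain the linear part $ax+b$ of $P/Q$ and pin down the sign of $a$ as $-\epsilon$ from the sign change of $P/Q$ on $(x_n,\infty)$, so that $(P/Q)'=a-\sum_i r_i(x-x_i)^{-2}$ is a sum of terms of a single sign. The two routes are of comparable length; yours is a bit more self-contained in that it avoids both role-swaps, at the price of the explicit endpoint and linear-term case analysis.
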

\begin{proof}
    

    Without loss of generality we may assume $P$ and $Q$ to be monic. Suppose $W(P,Q;x) \neq 0$ on $\bbR$, with $\deg Q = N$, and $\deg P\le N+1$. Since $W(P,Q)$ is continuous it keeps the same sign for all $x \in \bbR$. If $x_j$ is a zero of $Q$, we have $W(P,Q;x_j) = P(x_j)Q'(x_j)$, so all zeros of $Q$ are simple and different from the zeros of $P$. We write $x_1>x_2>\ldots>x_N \in \bbR$ for the zeros of $Q$.
    
    Since $Q'(x_j)$ alternates sign, we obtain that between each two consecutive zeros of $Q$ there must be an odd number of zeros of $P$ (counting multiplicities). Switching the roles of $P$ and $Q$, we obtain that there is exactly one zero of $P$ on each interval $(x_j,x_{j+1})$. This determines the location of $N-1$ zeros of $P$ and proves $P\sim Q$ unless  $\deg P=N+1$. If $\deg P= N+1$ then we claim there is one zero of $P$ on $(x_1,\infty)$. If not then $P(x_1)>0$ and therefore $W(P,Q;x_1) = P(x_1) Q'(x_1)>0$. But  this contradicts $W(P,Q;x)\sim x^{N+1}(N x^{N-1})-(n+1) x^N x^N = -x^{2N}$ as $x\to\infty$. Similarly one shows that $P$ must have a zero on $(-\infty,x_N)$. This completes the proof of $P\sim Q$.

    For the converse, 
    assume $\deg{Q} \geq \deg{P}$ and write 
    \begin{equation}
        W(P,Q;x) = -Q(x)^2(P/Q)'(x) = Q(x)^2\sum_{j = 1}^{N}\frac{\lambda_j}{(x-x_j)^2},
    \end{equation}
    where $\lambda_j$ is the residue of $P/Q$ at $x_j$. 
    Interlacing $P\sim Q$ implies that all $\lambda_j$'s are of the same sign, and then the last equality proves that $W(P,Q;x)$ does not vanish on $\bbR$.
    The case $\deg Q \le \deg P$ can be handled by swapping the roles of $P$ and $Q$.
\end{proof}

\section{Multiple orthogonal polynomials}\label{ss:MOPRL}
\subsection{Normality}\label{ss:MOPRLnormality}
\hfill\\



Let $r\ge 1$ and consider a system of measures  on the real line $\bm{\mu} = (\mu_1,\dots,\mu_r)$. 
$\bm{\mu} = (\mu_1,\dots,\mu_r) \in \mathcal{M}(\bbR)^r$. 
Recall the definition of the type I and type II multiple orthogonal polynomials~\eqref{eq:moprlI}, \eqref{eq:moprlII}.

It is not hard to verify that existence and uniqueness of either type I or type II polynomials is equivalent to the condition $\det H_{\bm{n}}[\bm{\mu}]\ne 0$, where
\begin{equation}
	\label{eq:D}
	H_{\bm{n}} [\bm{\mu}]= 
  \begin{pmatrix}
c_0^{(1)} & c_1^{(1)} & \cdots & c_{\abs{\bm{n}}-1}^{(1)} \\
c_1^{(1)} & c_2^{(1)} & \cdots & c_{\abs{\bm{n}}}^{(1)} \\
\vdots & \vdots & \ddots & \vdots \\
c_{n_1-1}^{(1)} & c_{n_1}^{(1)} & \cdots & c_{\abs{\bm{n}}+n_1-2}^{(1)} \\
\hline
& & \vdots \\
\hline
c_0^{(r)} & c_1^{(r)} & \cdots & c_{\abs{\bm{n}}-1}^{(r)} \\
c_1^{(r)} & c_2^{(r)} & \cdots &  c_{\abs{\bm{n}}}^{(r)} \\
\vdots & \vdots & \ddots & \vdots \\
c_{n_r-1}^{(r)} & c_{n_r}^{(r)} & \cdots & c_{\abs{\bm{n}}+n_r-2}^{(r)} \\
\end{pmatrix}.
\end{equation}
Here 
$c^{(j)}_k = \int x^k d\mu_j(x)$.
If this happens then we say that $\bm{n}$ is normal with respect to $\bm{\mu}$. $\bm{\mu}$ is perfect if every $\bm{n}\in\bbN^r$ is normal.




\begin{rem}\label{rem:normality} It is easy to see that $\bm{n}$ is normal if and only if there is no non-zero solution to \eqref{eq:moprlII} with $\deg{P_{\bm{n}}} < \abs{\bm{n}}$. Similarly, $\bm{n}$ is normal if and only if there is no non-zero vector $\bm{A}_{\bm{n}}$ with $\deg{A_{\bm{n}}^{(j)}} \leq n_j - 1$ and 
\begin{equation}
    \sum_{j = 1}^r\int A_{\bm{n}}^{(j)}(x)x^k d\mu_j(x) = 0, \qquad k = 0,\dots,\abs{\bm{n}}-1.
\end{equation}
\end{rem}

\subsection{Angelesco systems and their perfectness}\label{ss:Angelesco}
\hfill\\

\begin{defn}\label{def:Angelesco}
    For each $1\le j \le r$, let $\mu_j\in\mathcal{M}(\bbR)$. We call $(\mu_j)_{j=1}^r$ an Angelesco system if there exist intervals $\Gamma_j$, $1\le j \le r$, such that $\supp\,\mu_j\subseteq \Gamma_j$ and for each $j\ne k$, $\Gamma_j\cap \Gamma_k$ is either empty or consists of a single point.
\end{defn}

\begin{thm}\label{thm:Angelesco}
    Angelesco systems are perfect.
\end{thm}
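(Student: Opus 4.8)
The plan is to compute $\det H_{\bm n}[\bm\mu]$ explicitly as a multidimensional integral with a sign‑definite integrand, using the generalized Andreief identity of Proposition~\ref{prop:Andreief}, and conclude that it cannot vanish. The starting observation is that the $j$-th block of $H_{\bm n}[\bm\mu]$ has $(i,k)$-entry $c^{(j)}_{i+k}=\int x^i x^k\,d\mu_j(x)=\int \phi_i(x)\psi_k(x)\,d\mu_j(x)$ with $\phi_i(x)=\psi_i(x)=x^i$, which is precisely the shape required by~\eqref{eq:Andreief}. So the first step is to apply Proposition~\ref{prop:Andreief} once, with $\mu=\mu_r$, $M=n_r$, $N=\abs{\bm n}$, and $\bm A$ equal to the first $r-1$ blocks; this replaces the $\mu_r$-block by the $n_r\times\abs{\bm n}$ "Vandermonde rows" $\big((x^{(r)}_i)^{k}\big)$ and extracts the factors $\tfrac{1}{n_r!}$ and $\Delta(x^{(r)}_1,\dots,x^{(r)}_{n_r})=\prod_{i<i'}(x^{(r)}_{i'}-x^{(r)}_i)$.

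The second step is to iterate: move the newly created Vandermonde rows aside (to the top, at the cost of a sign), so that the $\mu_{r-1}$-block becomes the bottom block, apply Proposition~\ref{prop:Andreief} again with $\mu=\mu_{r-1}$, $M=n_{r-1}$, and so on for all $r$ blocks. After all $r$ applications the remaining determinant is that of the full $\abs{\bm n}\times\abs{\bm n}$ matrix whose rows are $\big((y_a)^{k}\big)_{0\le k\le\abs{\bm n}-1}$ with $y_a$ running over \emph{all} $\abs{\bm n}$ integration variables, i.e. an ordinary Vandermonde determinant. Splitting it into within‑block and cross‑block differences, relabelling the measures so that the intervals are linearly ordered $\Gamma_1\le\Gamma_2\le\cdots\le\Gamma_r$ (which is automatic: intervals pairwise intersecting in at most a point are linearly ordered) and arranging the row order of the final Vandermonde to match, we arrive, up to a global sign, at
\begin{equation*}
  \det H_{\bm n}[\bm\mu]=\frac{\pm 1}{\prod_{j=1}^r n_j!}\int\cdots\int
  \Bigg(\prod_{j=1}^r\prod_{1\le i<i'\le n_j}(x^{(j)}_{i'}-x^{(j)}_{i})^2\Bigg)
  \Bigg(\prod_{1\le j<j'\le r}\prod_{i,i'}(x^{(j')}_{i'}-x^{(j)}_{i})\Bigg)\prod_{j=1}^r d^{n_j}\mu_j .
\end{equation*}

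The final step is to read off non‑vanishing. Replacing $\mu_j$ by $-\mu_j$ where necessary (this only flips the sign of $\det H_{\bm n}[\bm\mu]$), we may assume every $\mu_j$ is a positive measure. Then the within‑block factors are squares, hence $\ge 0$, and every cross‑block factor $x^{(j')}_{i'}-x^{(j)}_i$ is $\ge 0$ since $x^{(j)}_i\in\Gamma_j$ lies to the left of $x^{(j')}_{i'}\in\Gamma_{j'}$ for $j<j'$; so the integrand is nonnegative, and it is strictly positive whenever all $\abs{\bm n}$ points are distinct and avoid the finitely many shared endpoints. Because each $\mu_j$ has infinite support, the product measure gives positive mass to that set, so the integral is strictly positive and $\det H_{\bm n}[\bm\mu]\ne 0$; hence every $\bm n\in\bbN^r$ is normal, i.e. $\bm\mu$ is perfect. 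The one place demanding care is the bookkeeping in the iterated application of the Andreief identity — keeping track of which rows currently constitute $\bm A$, which permutations bring the next block to the bottom, and verifying that the determinant left at the end is genuinely the Vandermonde in all the integration variables in an order compatible with the chosen ordering of the $\Gamma_j$. Everything after that (the sign normalization and the positivity‑on‑a‑set‑of‑positive‑measure argument using infiniteness of $\supp\mu_j$) is routine.
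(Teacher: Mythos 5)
Your proposal is correct and follows essentially the same route as the paper: apply the generalized Andreief identity of Proposition~\ref{prop:Andreief} block by block with $N=\abs{\bm n}$ and $M=n_j$, reduce $\det H_{\bm n}[\bm\mu]$ to an integral of the full Vandermonde times the product of the within-block Vandermondes, and observe that the resulting integrand $\prod_{i<j}\Delta(\bm x_i,\bm x_j)\prod_j \Delta(\bm x_j)^2$ is sign-definite because the intervals $\Gamma_j$ are ordered. Your extra care about the row bookkeeping, the sign normalization, and the positive-measure set on which the integrand is strictly positive (using infiniteness of each $\supp\mu_j$) only makes explicit what the paper leaves implicit.
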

\begin{proof}
Write $H^{(j)}_{n_j,|\bm{n}|}$ for the $j$-th block in \eqref{eq:D}. Apply~\eqref{eq:Andreief} $r$ times to each of the $H^{(j)}_{n_j,|\bm{n}|}$ sequentially (use $N=|\bm{n}|$ and $M=n_j$) and use the disjointness of the supports of $\mu_j$ to obtain
\begin{multline*}
    \det H_{\bm{n}} = \frac{1}{n_1!\ldots n_r!}\int_{\Gamma_1^{n_1}}\!\!\ldots\! \int_{\Gamma_r^{n_r}} \Delta_{|\bm{n}|}(\bm{x}_1,\ldots,\bm{x}_r)\prod_{j=1}^r \Delta_{n_j}(\bm{x}_j) \, d^{n_j} \mu_j(\bm{x}_j),
\end{multline*}
where $\bm{x}_j = (x_{j,1},\dots,x_{j,n_j})$, and $\Delta_k(x_1,\dots,x_k) = \prod_{i<j}(x_j-x_i)$ is the $k\times k$ Vandermonde determinant. We end up with the integral
\begin{equation}
    \frac{1}{n_1!\ldots n_r!}\int_{\Gamma_1^{n_1}}\!\!\ldots\! \int_{\Gamma_r^{n_r}}
    \prod_{i<j} \Delta(\bm{x}_i,\bm{x}_j)
    \prod_{j=1}^r \Delta(\bm{x_j})^2 \, d^{n_j} \mu_j(\bm{x}_j),
\end{equation}
where $\Delta(\bm{x}_i,\bm{x}_j) := \prod_{\alpha,\beta} (x_{j,\alpha}-x_{i,\beta})$. Clearly the integrand does not change sign, so cannot be zero, and we conclude that $\bm{n}$ is normal. 
\end{proof}

\begin{rem}\label{rem:signAngelesco}
    Assuming intervals $\Gamma_j$ are ordered according to $\Gamma_i\le \Gamma_j$ for all $i<j$, the above proof also shows that $\det H_{\bm{n}} > 0$. {This fact can be used to show that all the nearest-neighbour recurrence coefficients~\cite{NNRR} $a_{\bm{n},j}$ for Angelesco systems are positive}.
\end{rem}

\subsection{AT systems and their perfectness}\label{ss:AT}
\hfill\\

Let $\Gamma$ be a closed interval of $\bbR$. 
 A collection $(u_j(t))_{j=1}^n$ of continuous real-valued functions on 
 $\Gamma$ is called a Chebyshev system on $\Gamma$ if the determinant
    \begin{equation}\label{eq:Chebyshev}
        U_n(\bm{x}):=\det
        \begin{pmatrix}
            u_1(x_1) & u_1(x_2) & \cdots & u_1(x_n) \\
            u_2(x_1) & u_2(x_2) & \cdots & u_2(x_n) \\
            \vdots & \vdots & \ddots & \vdots \\
            u_n(x_1) & u_n(x_2) & \cdots & u_n(x_n) \\
        \end{pmatrix} 
    \end{equation}
    is non-zero and has a constant sign
    for any $x_1<x_2<\ldots <x_n$ on $\Gamma$. See~\cite{KarStu} to more details about the Chebyshev property and its equivalent form.  
    


    \begin{defn}\label{def:AT}
        For each $1\le j \le r$, let $d\mu_j(x)=w_j(x)d\mu(x)$ be a  measure that is absolutely continuous with respect to some $\mu\in\mathcal{M}({\Gamma})$. We call $(\mu_j)_{j=1}^r$ an AT system on $\Gamma$ for the multi-index $\bm{n} = (n_1,\ldots,n_r)$ if the functions
        \begin{equation}\label{eq:AT}
	\{w_1, xw_1,\ldots,x^{n_1-1}w_1,w_2,xw_2,\ldots,x^{n_2-1}w_2,\ldots,w_r,xw_r,\ldots,x^{n_r-1}w_r\}
\end{equation}
        form a Chebyshev system on $\Gamma$.
    \end{defn}

    \begin{thm}\label{thm:ATperfect}
        If $(\mu_j)_{j=1}^r$ is AT for the index $\bm{n}$ then $\bm{n}$ is normal.
    \end{thm}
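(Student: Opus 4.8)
The plan is to argue by contradiction, following the same philosophy as the proof of Theorem~\ref{thm:Angelesco}: reduce normality to showing that a certain determinant is nonzero, and make the Chebyshev property do the work that disjointness of supports did in the Angelesco case. Concretely, by Remark~\ref{rem:normality}, suppose $\bm{n}$ is not normal; then there is a non-zero vector of polynomials $\bm{A}_{\bm{n}} = (A^{(1)}_{\bm{n}},\dots,A^{(r)}_{\bm{n}})$ with $\deg A^{(j)}_{\bm{n}}\le n_j-1$ and
\begin{equation*}
    \sum_{j=1}^r \int A^{(j)}_{\bm{n}}(x) x^k\, d\mu_j(x) = 0, \qquad k=0,1,\dots,\abs{\bm{n}}-1.
\end{equation*}
I would first observe that the function $Q(x):=\sum_{j=1}^r A^{(j)}_{\bm{n}}(x) w_j(x)$ is a non-trivial linear combination of the $\abs{\bm{n}}$ functions listed in~\eqref{eq:AT}, since not all $A^{(j)}_{\bm{n}}$ vanish. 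The orthogonality relations say exactly that $\int Q(x) x^k\, d\mu(x)=0$ for $k=0,\dots,\abs{\bm{n}}-1$.

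The key step is then the standard fact that a nonzero element $Q$ of the span of a Chebyshev system of $N=\abs{\bm{n}}$ functions on $\Gamma$ has at most $N-1$ sign changes (equivalently, at most $N-1$ zeros where $Q$ changes sign) on $\Gamma$. I would prove this inside the argument rather than cite it: if $Q$ changed sign at points $t_1<\dots<t_N$ in the interior of $\Gamma$, one could interpolate — pick a function $R(x)=\sum_{k=0}^{N-1}\beta_k x^k$ (a polynomial of degree $\le N-1$, hence with the same sign-change pattern controlled by a Vandermonde-type argument) that changes sign at exactly $t_1,\dots,t_{N-1}$; then $\int Q(x) R(x)\, d\mu(x)$ would be a nonzero integral of a function of constant sign, contradicting the orthogonality of $Q$ to all polynomials of degree $\le N-1$. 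To make the sign-change count rigorous, I would use the defining property~\eqref{eq:Chebyshev}: evaluating the Chebyshev determinant $U_N$ at the $N$ hypothetical sign-change points, together with the coefficients of $Q$, shows $Q$ cannot change sign $N$ times while being orthogonal to the polynomial span of dimension $N$; alternatively one invokes the Andreief identity~\eqref{eq:Andreief} to express $\int Q(x) R(x)\,d\mu$ against a Vandermonde and $U_N$, both of constant sign, forcing it to be nonzero.

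Finally, having established that $Q$ has at most $\abs{\bm{n}}-1$ sign changes, I would derive the contradiction. Take $R(x)=\prod(x-t_i)$ over the actual sign-change points $t_i$ of $Q$ (there are $m\le \abs{\bm{n}}-1$ of them), so $QR$ has constant sign on $\Gamma$ and is not identically zero; since $\deg R=m\le\abs{\bm{n}}-1$, the orthogonality relations give $\int Q(x)R(x)\,d\mu(x)=0$, contradicting that the integrand is of one sign with $\mu$ having infinite support. Hence no such $\bm{A}_{\bm{n}}$ exists and $\bm{n}$ is normal. The main obstacle I anticipate is the clean passage from "nonzero element of a Chebyshev span" to "at most $N-1$ sign changes": one must be careful that $\mu$ is a general measure in $\mathcal{M}(\Gamma)$ (not assumed to have a density), so the interpolation/sign-change argument has to be phrased in terms of sign changes on the support rather than continuity alone, and one should make sure the Chebyshev hypothesis is used in the form~\eqref{eq:Chebyshev} with the strict ordering $x_1<\dots<x_N$ — this is exactly where the constant-sign determinant condition is essential, and it is the step that replaces the "integrand does not change sign" observation from the Angelesco proof.
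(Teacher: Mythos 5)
Your argument is correct in substance, but it is not the paper's proof: it is the classical zero-counting argument for AT systems (reduce non-normality to a nontrivial type I vector via Remark~\ref{rem:normality}, form $Q=\sum_j A^{(j)}_{\bm n}w_j$, note that a nontrivial element of the span of a Chebyshev system of $N=\abs{\bm n}$ functions has at most $N-1$ zeros, then multiply by the polynomial vanishing at the sign-change points to contradict orthogonality). The paper deliberately avoids zero counting, as stated in its introduction: it applies the Andreief identity~\eqref{eq:Andreief} with $M=N=\abs{\bm n}$ to write
\begin{equation*}
\det H_{\bm n}[\bm\mu]=\frac{1}{\abs{\bm n}!}\int_{\Gamma^{\abs{\bm n}}}U_{\abs{\bm n}}(\bm x)\,\Delta_{\abs{\bm n}}(\bm x)\,d^{\abs{\bm n}}\mu(\bm x)
\end{equation*}
and observes that the integrand is sign-definite because $U_{\abs{\bm n}}$ and the Vandermonde $\Delta_{\abs{\bm n}}$ each keep a constant sign on ordered tuples and change sign simultaneously under permutations. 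That route is shorter, needs no auxiliary lemma about zeros of Chebyshev spans, and yields the sign of $\det H_{\bm n}$ as a by-product (used in the subsequent remark on the nearest-neighbour recurrence coefficients). Your route is the standard textbook argument and is more elementary in that it does not invoke the Andreief identity.

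One local criticism: of the two justifications you offer for ``a nonzero element of the span has at most $N-1$ sign changes,'' the interpolation one is broken --- if $Q$ changes sign at $t_1<\dots<t_N$ and $R$ changes sign at exactly $t_1,\dots,t_{N-1}$, then $QR$ still changes sign at $t_N$, so it is not of constant sign and no contradiction results. The correct justification is the one you give second: sign changes of the continuous function $Q$ are zeros, and $N$ distinct zeros of a nontrivial combination would make the matrix $\bigl(u_i(x_k)\bigr)$ singular, contradicting the nonvanishing of $U_N$ in~\eqref{eq:Chebyshev}. With that fixed, and with the closing observation that $QR$ is continuous with finitely many zeros while $\supp\mu$ is infinite (so the sign-definite integrand cannot integrate to zero), your proof is complete.
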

    \begin{proof}
        Applying the Andreief identity (\eqref{eq:Andreief} with $M=N=|\bm{n}|$) and performing elementary row operations we arrive to
        \begin{equation}
            \det H_{\bm{n}}[\bm{\mu}] = \frac{1}{|\bm{n}|!} \int_{{\Gamma}^{|\bm{n}|}} U_{|\bm{n}|}(\bm{x}) \Delta_{|\bm{n}|}(\bm{x})  \, d^{|\bm{n}|} \mu(\bm{x}),
        \end{equation}
        where $W$ is~\eqref{eq:Chebyshev} for the system~\eqref{eq:AT}, and $\Delta$ is again the Vandermonde determinant. Since both $U_{|\bm{n}|}$ and $\Delta_{|\bm{n}|}$ preserve sign on $x_1<\ldots<x_{|\bm{n}|}$, and simultaneously change sign when we permute the $x_j$'s, we see that the integrand does not change sign. Hence $\det H_{\bm{n}}\ne 0$, so that $\bm{n}$ is normal. 
    \end{proof}


\begin{rem}
    Similarly to Remark~\ref{rem:signAngelesco}, we can see that $\operatorname{sgn} \det H_{\bm{n}} =\operatorname{sgn} U_{\bm{n}}$. The collection of signs of $U_{\bm{n}}$ therefore determines the signs of the nearest-neighbour recurrence coefficients $a_{\bm{n},j}$ for AT systems.
\end{rem}

\subsection{Nikishin systems}\label{ss:Nikishin}
\hfill\\



For any set $S\subseteq\bbR$, 
we denote  $\mathring{S}$ to be the interior of $S$ in the topology of $\bbR$. 
Given two closed intervals $\Gamma_1$ and $\Gamma_2$ with $\mathring{\Gamma}_1\cap\mathring{\Gamma}_2=\varnothing$, and two measures $\sigma_1\in\calM(\Gamma_1)$,  $\sigma_2\in\calM(\Gamma_2)$, denote $\la  \sigma_1,\sigma_2 \ra$ to be the measure in $\calM(\Gamma_1)$ given by
\begin{equation}\label{eq:brackets}
	d \la  \sigma_1,\sigma_2 \ra (x) := m_{\sigma_2}(x) d\sigma_1(x), 
\end{equation}
where $m_{\sigma_2}$ is the $m$-function of $\sigma_2$ (see Section \ref{ss:mFunctions}). Note that 
the right-hand side of~\eqref{eq:brackets} always defines a finite sign-definite measure on $\Gamma_1$ assuming that $\Gamma_1$ and $\Gamma_2$ are disjoint.
If $\Gamma_1$ and $\Gamma_2$ share a common endpoint however, then by writing $\la  \sigma_1,\sigma_2 \ra$, we implicitly {\it assume} that ~\eqref{eq:brackets} defines a finite measure, as this is no longer automatic and depends on the behavior of $\sigma_1$ and $\sigma_2$ at that point. 

\begin{defn}\label{def:Nikishin}
We say that $\bm{\mu}=(\mu_1,\ldots,\mu_r)\in\calM(\bbR)^r$ forms a Nikishin system generated by $(\sigma_1,\ldots,\sigma_r)$ (we then write $\bm{\mu}=\calN(\sigma_1,\ldots,\sigma_r)$), if there is
a collection of closed intervals $\Gamma_j$, $j=1,\ldots,r$ such that
\begin{equation}
	\mathring{\Gamma}_j \cap \mathring{\Gamma}_{j+1} = \varnothing, \qquad j=1,\ldots,r-1,
\end{equation}
and measures $\sigma_j \in\calM(\Gamma_j)$, $j=1,\ldots,r$, so that
\begin{equation}
	\mu_1 = \sigma_1,  \mu_{2} = \la \sigma_1, \sigma_{2}\ra, \mu_{3} = \la \sigma_1, \la \sigma_{2},\sigma_3\ra \ra, \ldots, \mu_r = \la \sigma_1, \la \sigma_{2},\la \sigma_3,\ldots,\sigma_r\ra\ra \ra.
\end{equation}
\end{defn}


It is well-known that Nikishin systems are perfect. The proof for $r=2$ 
appeared in~\cite{DriSta94}, and for any $r\ge 2$ this was shown in~\cite{FidLop11,FidLop11b}. See also~\cite{Nik80,BBFL,FidIllLop04,FidLop02,CouVA,Kui,Lop21} for related results. 


\section{Zero location}\label{ss:location}
\subsection{Zero location for Type II multiple orthogonal polynomials}\label{ss:locationII}
\hfill\\

The following theorem provides a very simple yet useful tool for locating zeros of the type II multiple orthogonal polynomials. Recall that the notation $(x-z_0)\mu$ stands for the Christoffel transform of $\mu$, see Section~\ref{ss:mFunctions}. Let us also write $(x-z_0)\bm{\mu}$ for  $((x-z_0)\mu_1,\dots,(x-z_0)\mu_r)$.

\begin{thm}\label{thm:zerosII}
    Let $\bm{n}$ be normal with respect to $\bm{\mu}$. Then 
    \begin{equation}
        \calZ[P_{\bm{n}}] = \left\{
        z_0\in\bbC: \bm{n} \mbox{ is not normal for } (x-z_0)\bm\mu \right\}.
    \end{equation}
\end{thm}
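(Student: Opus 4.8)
The plan is to characterize the zeros of $P_{\bm n}$ via a determinantal argument that relates $P_{\bm{n}}$ evaluated at $z_0$ to the normality of $(x-z_0)\bm\mu$. First I would recall from Remark~\ref{rem:normality} that $\bm n$ is \emph{not} normal for a system precisely when there is a nonzero monic-or-lower-degree solution of the type II orthogonality conditions with $\deg P < |\bm n|$; equivalently, when the Hankel-type matrix $H_{\bm n}$ for that system is singular. So the statement to prove is: $P_{\bm n}(z_0)=0$ if and only if $\det H_{\bm n}[(x-z_0)\bm\mu]=0$.

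The key computation is to express the moments of $(x-z_0)\mu_j$ in terms of those of $\mu_j$: if $c_k^{(j)}=\int x^k\,d\mu_j$, then the $k$-th moment of $(x-z_0)\mu_j$ is $c_{k+1}^{(j)}-z_0 c_k^{(j)}$. Thus each row of $H_{\bm n}[(x-z_0)\bm\mu]$ is obtained from the ``shifted'' Hankel data of $\bm\mu$ by the column operation ``next column minus $z_0$ times current column.'' The cleanest way to package this is to observe that $H_{\bm n}[(x-z_0)\bm\mu]$ is obtained from the $|\bm n|\times(|\bm n|+1)$ matrix of moments $\bigl(c_{k}^{(j)}\bigr)$ (with columns indexed $k=0,\dots,|\bm n|$, rows as in \eqref{eq:D} but with one extra column) by right-multiplication by the $(|\bm n|+1)\times|\bm n|$ bidiagonal matrix with $-z_0$ on the diagonal and $1$ on the subdiagonal. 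Expanding $\det H_{\bm n}[(x-z_0)\bm\mu]$ by the Cauchy–Binet formula over this rectangular product, the minors of the bidiagonal factor are easily computed: dropping column $i$ from the rectangular moment matrix contributes a minor equal (up to sign) to $z_0^{\,i}$ (more precisely, a monomial pattern), so that $\det H_{\bm n}[(x-z_0)\bm\mu]$ becomes a polynomial in $z_0$ of degree $|\bm n|$ whose coefficients are exactly the maximal minors of the rectangular moment matrix of $\bm\mu$. By Cramer's rule, that same polynomial (up to the nonzero constant $\det H_{\bm n}[\bm\mu]$, which is where normality of $\bm n$ for $\bm\mu$ is used) equals $\det H_{\bm n}[\bm\mu]\cdot P_{\bm n}(z_0)$: indeed the monic type II polynomial $P_{\bm n}$ has coefficients given by Cramer ratios of precisely those maximal minors.

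An alternative, perhaps more transparent route avoids Cauchy–Binet: one shows directly that $P_{\bm n}(z_0)=0$ iff the orthogonality system for $(x-z_0)\bm\mu$ at $\bm n$ has a nonzero degree-$<|\bm n|$ solution. If $P_{\bm n}(z_0)=0$, write $P_{\bm n}(x)=(x-z_0)R(x)$ with $R$ monic of degree $|\bm n|-1$; then for each $j$ and $k\le n_j-1$, $\int R(x)x^k\,d[(x-z_0)\mu_j](x)=\int P_{\bm n}(x)x^k\,d\mu_j(x)=0$, so $R$ is a nonzero solution of \eqref{eq:moprlII} for $(x-z_0)\bm\mu$ of degree $|\bm n|-1<|\bm n|$, whence $\bm n$ is not normal for $(x-z_0)\bm\mu$. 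Conversely, if $\bm n$ is not normal for $(x-z_0)\bm\mu$, take a nonzero $Q$ with $\deg Q<|\bm n|$ satisfying the orthogonality for $(x-z_0)\bm\mu$; then $(x-z_0)Q(x)$ satisfies \eqref{eq:moprlII} for $\bm\mu$ with $\deg<|\bm n|+1$, hence lies in the one-dimensional solution space spanned by $P_{\bm n}$ (using normality of $\bm n$ for $\bm\mu$), so $(x-z_0)Q(x)=c\,P_{\bm n}(x)$; comparing degrees forces $c\ne0$ and then $P_{\bm n}(z_0)=0$. One must also check the boundary bookkeeping: that the degree constraints $\deg Q < |\bm n|$ are exactly right so that multiplication by $(x-z_0)$ lands in the correct solution space, and that $Q\neq 0$ forces $(x-z_0)Q\ne0$.

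The main obstacle is the converse direction's degree accounting—ensuring that a nonzero witness $Q$ of non-normality for $(x-z_0)\bm\mu$ really produces, after multiplication by $(x-z_0)$, an element of the \emph{one-dimensional} solution space for $\bm\mu$ (which requires $\deg((x-z_0)Q)\le|\bm n|$, i.e. $\deg Q\le |\bm n|-1$, matching exactly the ``$\deg P<|\bm n|$'' form of non-normality in Remark~\ref{rem:normality}), and then ruling out $c=0$. This is genuinely a one-line argument once set up, but it is the only place where something could go wrong, so I would state it carefully. The direct route also cleanly gives the set equality as stated (an equality of subsets of $\bbC$, with $z_0$ ranging over all of $\bbC$), with no separate treatment needed for real versus complex $z_0$.
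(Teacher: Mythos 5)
Your ``alternative, more transparent route'' is exactly the paper's proof: factor $P_{\bm n}(x)=(x-z_0)R(x)$ to produce a degree-$<|\bm n|$ witness of non-normality for $(x-z_0)\bm\mu$, and conversely multiply a non-normality witness $Q$ by $(x-z_0)$ and use normality of $\bm n$ for $\bm\mu$ to force $(x-z_0)Q=cP_{\bm n}$ with $c\ne 0$; your degree bookkeeping for the converse is correct. The preliminary Cauchy--Binet sketch is not needed and is not what the paper does, but the direct argument you give is complete and matches the paper's.
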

\begin{rem}\label{rem:multiplicityII}
    The same proof shows that $z_0$ being a zero of $P_{\bm{n}}$ of multiplicity $\ge 2$ implies that $\bm{n}$ is not normal for $(x-z_0)^2 \bm\mu$.
\end{rem}
\begin{rem}\label{rem:complexZerosII}
    Suppose each $\mu_j\in\mathcal{M}(\bbR)$ and let $z_0\in\bbC\setminus\bbR$. Then the same proof shows that $P_{\bm{n}}(z_0)=0$ implies that $\bm{n}$ is not normal for $|x-z_0|^2 \bm\mu$. 
\end{rem}
\begin{proof}
    If $P_{\bm{n}}(z_0) = 0$ then $Q(x) = P_{\bm{n}}(x)/(x-z_0)$ satisfies every orthogonality relation for the index $\bm{n}$ with respect to $(x-z_0)\bm{\mu}$. Since $\deg{Q} < \abs{\bm{n}}$, we obtain that $\bm{n}$ is not normal for $(x-z_0)\bm{\mu}$ by Remark \ref{rem:normality}. 

    Conversely, if $\bm{n}$ is not normal for $(x-z_0)\bm{\mu}$ then by Remark \ref{rem:normality} there is some polynomial $Q \neq 0$ with $\deg{Q} < \abs{\bm{n}}$ and
    \begin{equation}
        \int Q(x)x^k (x-z_0)d\mu_j(x) = 0, \qquad k = 0,\dots,n_j-1.
    \end{equation}
    Since $\bm{n}$ is normal, we then necessarily have $P_{\bm{n}}(x) = Q(x)(x-z_0)$, so $P_{\bm{n}}(z_0) = 0$. 
\end{proof}

\begin{cor}\label{cor:ATtypeII}
    Let $\bm{\mu}$ be an AT system on a closed 
    interval 
    $\Gamma$ 
    for an index $\bm{n}\in\bbN^r$. Then $\calZ[P_{\bm{n}}]\subset \mathring{\Gamma}$, and each zero is simple.
\end{cor}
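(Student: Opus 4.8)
The plan is to combine Theorem~\ref{thm:zerosII} with the perfectness criterion for AT systems (Theorem~\ref{thm:ATperfect}). By Theorem~\ref{thm:ATperfect} the index $\bm{n}$ is normal for $\bm{\mu}$, so $P_{\bm{n}}$ exists and is unique, and Theorem~\ref{thm:zerosII} applies: we must show that for every $z_0\in\bbC\setminus\mathring{\Gamma}$ the index $\bm{n}$ remains normal for $(x-z_0)\bm{\mu}$, and that for $z_0\in\mathring{\Gamma}$ the index is normal for $(x-z_0)^2\bm{\mu}$ (the latter gives simplicity, via Remark~\ref{rem:multiplicityII}). Both reduce to checking that a modified weight system is again a Chebyshev system on $\Gamma$.

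First I would treat the real case $z_0\in\bbR\setminus\mathring{\Gamma}$. Here the new weights are $\widetilde{w}_j(x)=(x-z_0)w_j(x)$, and $x-z_0$ has constant sign on $\Gamma$ (it does not vanish in the interior). Multiplying every function in a Chebyshev system~\eqref{eq:AT} by a common factor of constant sign multiplies the determinant~\eqref{eq:Chebyshev} by $\prod_k (x_k-z_0)$, which has constant sign for $x_1<\cdots<x_{|\bm{n}|}$ in $\Gamma$; hence $(x-z_0)\bm{\mu}$ is again AT for $\bm{n}$, so $\bm{n}$ is normal for it. For complex $z_0\in\bbC\setminus\bbR$ I would instead invoke Remark~\ref{rem:complexZerosII}: the relevant weight becomes $|x-z_0|^2 w_j(x)$, and $|x-z_0|^2$ is strictly positive on all of $\Gamma$, so the same constant-sign argument shows $|x-z_0|^2\bm{\mu}$ is AT for $\bm{n}$; thus $P_{\bm{n}}$ cannot vanish off $\bbR$. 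For simplicity of the real zeros $z_0\in\mathring{\Gamma}$, I would use Remark~\ref{rem:multiplicityII}: the weight $(x-z_0)^2 w_j(x)$ has $(x-z_0)^2\ge 0$, and more to the point strictly positive away from the single point $z_0$, so the Chebyshev determinant for the squared-factor system picks up $\prod_k (x_k-z_0)^2>0$ whenever the $x_k$ are distinct; hence $\bm{n}$ is normal for $(x-z_0)^2\bm{\mu}$, and so $z_0$ cannot be a double zero.

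The only genuine subtlety — the step I expect to need the most care — is the behaviour at the endpoints of $\Gamma$ and the degenerate configurations in the Chebyshev determinant. One must be careful that multiplying by $(x-z_0)$ when $z_0$ is an endpoint of $\Gamma$, or by $(x-z_0)^2$ when $z_0\in\mathring{\Gamma}$, does not destroy the Chebyshev property: the determinant $U_{|\bm{n}|}(\bm{x})$ is only required to be nonzero and of constant sign for \emph{strictly} ordered points $x_1<\cdots<x_{|\bm{n}|}$, so the vanishing factor $(x_k-z_0)$ (or its square) never actually occurs, and constancy of sign is preserved. One should also note that $\Gamma$ being a closed interval and $z_0\notin\mathring{\Gamma}$ forces $x-z_0$ to be of one sign throughout the open interval, with at worst a single boundary zero, which is harmless for the same reason. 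Collecting these observations, Theorem~\ref{thm:zerosII} and Remark~\ref{rem:multiplicityII} then yield $\calZ[P_{\bm{n}}]\subset\mathring{\Gamma}$ with all zeros simple.
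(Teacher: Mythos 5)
Your overall architecture matches the paper's: invoke Theorem~\ref{thm:ATperfect} together with Theorem~\ref{thm:zerosII}, Remark~\ref{rem:complexZerosII}, and Remark~\ref{rem:multiplicityII}, after checking that the transformed systems are still AT on $\Gamma$ for $\bm{n}$. The gap is in how you check this. You absorb the factor into the weights, setting $\widetilde{w}_j=(x-z_0)w_j$ (resp. $|x-z_0|^2w_j$, $(x-z_0)^2w_j$), so that the determinant \eqref{eq:Chebyshev} picks up $\prod_k(x_k-z_0)$ (resp. its modulus squared or square). You then claim that this factor ``never actually occurs'' as zero because the points are strictly ordered. That is false: strict ordering only forbids $x_j=x_k$ for $j\neq k$; it does not forbid $x_k=z_0$. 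If $z_0$ is an endpoint of the closed interval $\Gamma$, the configuration $x_1=z_0$ is admissible and kills the determinant of your modified system; and in the simplicity step, where $z_0\in\mathring{\Gamma}$, any configuration with some $x_k=z_0$ does the same. So the modified weight collection is \emph{not} a Chebyshev system on $\Gamma$ in the sense of the paper's definition (which demands a nonzero determinant for \emph{all} strictly ordered tuples in $\Gamma$), and Theorem~\ref{thm:ATperfect} cannot be cited verbatim.

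The conclusion is salvageable in two ways. The paper's route: do not touch the weights at all, and instead replace the reference measure $\mu$ in Definition~\ref{def:AT} by $(x-z_0)\mu$ (resp. $|x-z_0|^2\mu$, $(x-z_0)^2\mu$); since $x-z_0$ has constant sign on $\Gamma$ when $z_0\in\bbR\setminus\mathring{\Gamma}$, and the quadratic factors are nonnegative for any $z_0$, the new reference measure is again in $\mathcal{M}(\Gamma)$, the Chebyshev system \eqref{eq:AT} is literally unchanged, and Theorem~\ref{thm:ATperfect} applies with no caveats. Alternatively, you could keep your computation but go back inside the proof of Theorem~\ref{thm:ATperfect}: the integral representation of $\det H_{\bm{n}}$ only needs the integrand $U_{|\bm{n}|}\Delta_{|\bm{n}|}$ to have constant sign and not vanish $\mu^{\otimes|\bm{n}|}$-a.e., which your extra factor does not spoil. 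Either fix is short, but as written the step you yourself flag as the delicate one is justified by an incorrect assertion.
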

\begin{proof}
    Let $z_0\in\bbR\setminus \mathring{\Gamma}$. $\bm{\mu}$ being AT on $\Gamma$ at $\bm{n}$ implies that $(x-z_0)\bm{\mu}$ is also AT on $\Gamma$ at $\bm{n}$: just replace the reference measure $\mu$ in Definition~\ref{def:AT} with $(x-z_0)\mu$.
    By Theorems~\ref{thm:ATperfect} and~\ref{thm:zerosII}, we get that $z_0$ is not a zero of $P_{\bm{n}}$. 

    Let $z_0\in\bbC\setminus \bbR$. Again, it is elementary to see that $|x-z_0|^2 \bm\mu$ is AT on $\Gamma$ for $\bm{n}$. By Theorem~\ref{thm:ATperfect} and Remark~\ref{rem:complexZerosII}, we get that $z_0$ is not a zero of $P_{\bm{n}}$. 

    That each zero must be simple follows from Remark~\ref{rem:multiplicityII} and AT property of $(x-z_0)^2\bm{\mu}$ on $\Gamma$ for $\bm{n}$ for any $z_0\in\bbR$.
\end{proof}
\begin{rem}
    An analogous statement holds for any Nikishin system by the same argument.
\end{rem}

Note that if $\bm{\mu}$ is an Angelesco system then both $(x-z_0)\bm{\mu}$ (for $z_0\notin \bbR\setminus \bigcup_{j=1}^r \mathring{\Gamma}_j$) and $|x-z_0|^2 \bm\mu$ (for $z_0\in\bbC\setminus \bbR$) are also Angelesco. Therefore applying the exact same argument as for AT, one obtains $\calZ[P_{\bm{n}}]\subset \bigcup_{j=1}^r \mathring{\Gamma}_j$ with each zero simple. This statement can be improved using the following well-known result. 





\begin{prop}\label{prop:quasiorthogonal}
    Let $\Gamma$ be an interval in $\bbR$ and $\mu\in\mathcal{M}(\Gamma)$. If  a polynomial $P(x)$ satisfies 
    \begin{equation}\label{eq:quasiorthogonal}
         \int P(x) x^k d {\mu}(x) = 0, \qquad k = 0,\dots,n-1,
     \end{equation}
     then it has at least $n$ zeros on $\mathring{\Gamma}$.
    \end{prop}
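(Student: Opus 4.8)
The plan is the classical sign-change (quasi-orthogonality) argument. First I would dispose of the trivial case $P\equiv 0$ (then every point of $\mathring{\Gamma}$ is a zero, and $\mathring{\Gamma}$ is a nondegenerate interval since $\supp\,\mu$ is infinite), so assume $P\not\equiv 0$. Let $x_1<x_2<\ldots<x_m$ be the points of $\mathring{\Gamma}$ at which $P$ changes sign, i.e.\ the zeros of $P$ in $\mathring{\Gamma}$ of odd multiplicity. If $m\ge n$ we are done, because these are already $n$ distinct zeros of $P$ in $\mathring{\Gamma}$; so suppose toward a contradiction that $m\le n-1$.

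Next I would introduce the polynomial $Q(x):=\varepsilon\prod_{j=1}^m(x-x_j)$, with the empty product read as $1$ when $m=0$, and $\varepsilon\in\{+1,-1\}$ chosen so that $P(x)Q(x)\ge 0$ on $\Gamma$. The point is that $PQ$ has no sign change on $\mathring{\Gamma}$: on each open subinterval determined by consecutive $x_j$'s (as well as on the two end pieces), both $P$ and $Q$ have constant sign, while at each $x_j$ the function $P$ reverses sign and $Q$ has a simple zero and also reverses sign, so $PQ$ retains a single fixed sign on all of $\mathring{\Gamma}\setminus\{x_1,\ldots,x_m\}$. By continuity this sign (or the value $0$) persists at the endpoints of $\Gamma$, so $PQ\ge 0$ on $\Gamma\supseteq\supp\,\mu$ after the sign normalization.

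Then, since $\deg Q=m\le n-1$, expanding $Q$ into monomials and invoking the orthogonality relations \eqref{eq:quasiorthogonal} gives $\int P(x)Q(x)\,d\mu(x)=0$. On the other hand $PQ$ is a nonzero polynomial, hence vanishes at only finitely many points, whereas $\supp\,\mu$ is infinite; therefore $PQ>0$ on a set of positive $\mu$-measure, forcing $\int PQ\,d\mu>0$ — a contradiction. Hence $m\ge n$, which yields the claimed $n$ distinct zeros of $P$ in $\mathring{\Gamma}$.

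The only mildly delicate step is the behavior of $PQ$ at the endpoints of $\Gamma$, since $\mu$ may carry mass there; this is handled by continuity of $PQ$ together with its established constant sign on the adjacent open subinterval of $\mathring{\Gamma}$. The rest is routine bookkeeping about sign changes and zero multiplicities.
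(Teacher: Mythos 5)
Your proof is correct, but it takes a genuinely different route from the paper's. You run the classical quasi-orthogonality sign-change argument: assuming $P$ has only $m\le n-1$ sign changes $x_1<\dots<x_m$ in $\mathring{\Gamma}$, you test against $Q(x)=\varepsilon\prod_{j}(x-x_j)$ of degree $m\le n-1$ and contradict $\int P(x)Q(x)\,d\mu(x)=0$ with the sign-definiteness of $PQ$ on $\supp\,\mu$ and the infinitude of $\supp\,\mu$. (One cosmetic point: since $\mathcal{M}(\Gamma)$ consists of measures of constant sign, not necessarily positive ones, the punchline should be that the integral is \emph{nonzero} rather than positive; this is a harmless normalization.) The paper instead stays inside its Christoffel-transform framework: it takes $Q$ to be the monic polynomial whose zeros are the zeros of $P$ in $\mathring{\Gamma}$, observes that $\widehat{\mu}=\frac{P(x)}{Q(x)}\mu$ again lies in $\mathcal{M}(\Gamma)$ because $P/Q$ has no zeros in $\mathring{\Gamma}$, rewrites \eqref{eq:quasiorthogonal} as the orthogonality relations for $Q$ with respect to $\widehat{\mu}$, and concludes $\deg Q\ge n$ from the fact that the $n$-th orthogonal polynomial of $\widehat{\mu}$ cannot have degree less than $n$. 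The paper's version is shorter and thematically consistent with the rest of the article, delegating the sign-change counting to the standard existence and uniqueness theory for the transformed measure; yours is more elementary and self-contained, and it actually delivers a slightly sharper conclusion -- at least $n$ \emph{distinct} zeros of odd multiplicity in $\mathring{\Gamma}$ -- which is the form implicitly used in Corollary~\ref{cor:angelesco zero location} to deduce simplicity of the zeros.
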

    \begin{proof}
        Let $Q(x)$ be the monic polynomial whose zeros are the zeros of $P$ that lie in $\mathring{\Gamma}$. Then the Christoffel transform $\widehat{\mu}:=\frac{P(x)}{Q(x)} \mu$ of $\mu$ belongs to $\mathcal{M}(\Gamma)$, and therefore it has a unique orthogonal polynomial of degree $n$. But~\eqref{eq:quasiorthogonal} implies
        \begin{equation}
        \int Q(x) x^k d\widehat{\mu}(x) = 0, \qquad k = 0,\dots,n-1.
        \end{equation}
        The $n$-th orthogonal polynomial of $\widehat{\bm{\mu}}$ cannot have degree less than $n$, which implies $\deg Q \ge n$.
    \end{proof}


\begin{cor}\label{cor:angelesco zero location}
    Let $\bm{\mu}$ be an Angelesco system and $\bm{n}\in\bbN^r$. Then $P_{\bm{n}}$ has exactly $n_j$ zeros on each $\mathring{\Gamma}_j$, and each zero is simple.
 \end{cor}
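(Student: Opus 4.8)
The plan is to combine the zero-location-by-Christoffel criterion with the counting lower bound of Proposition~\ref{prop:quasiorthogonal}, exactly as the discussion preceding the statement suggests. First I would record the upper bound: since $\bm{\mu}$ is Angelesco, so is $(x-z_0)\bm{\mu}$ for any $z_0\in\bbR\setminus\bigcup_j\mathring\Gamma_j$ (multiplying every $\mu_j$ by the single real linear factor $x-z_0$ keeps all supports inside the same disjoint intervals $\Gamma_j$), and likewise $|x-z_0|^2\bm\mu$ is Angelesco for $z_0\in\bbC\setminus\bbR$. By Theorem~\ref{thm:Angelesco} these indices are all normal, so Theorem~\ref{thm:zerosII} together with Remarks~\ref{rem:multiplicityII} and~\ref{rem:complexZerosII} forces every zero of $P_{\bm{n}}$ to lie in $\bigcup_{j=1}^r\mathring\Gamma_j$ and to be simple; in particular the number of zeros in $\mathring\Gamma_j$ is at most $\abs{\bm n}=\sum_k n_k$, but more usefully $P_{\bm n}$ has no zeros outside $\bigcup_j\mathring\Gamma_j$, hence exactly $\sum_{j=1}^r(\#\text{ zeros in }\mathring\Gamma_j)=\abs{\bm n}$ counting all of them.

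Next I would produce the matching lower bound on each interval separately. Fix $j$. The type II orthogonality relations~\eqref{eq:moprlII} say in particular that $\int P_{\bm n}(x)x^k\,d\mu_j(x)=0$ for $k=0,\dots,n_j-1$, which is precisely hypothesis~\eqref{eq:quasiorthogonal} with $\mu=\mu_j\in\mathcal M(\Gamma_j)$ and $n=n_j$. Proposition~\ref{prop:quasiorthogonal} then gives at least $n_j$ zeros of $P_{\bm n}$ in $\mathring\Gamma_j$. Doing this for each $j=1,\dots,r$ and using that the $\mathring\Gamma_j$ are pairwise disjoint, $P_{\bm n}$ has at least $\sum_{j=1}^r n_j=\abs{\bm n}$ zeros distributed among the $\mathring\Gamma_j$, at least $n_j$ in each.

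Finally I would reconcile the two counts. The polynomial $P_{\bm n}$ has degree exactly $\abs{\bm n}$, so it has at most $\abs{\bm n}$ zeros counted with multiplicity. The upper bound step shows all zeros lie in $\bigcup_j\mathring\Gamma_j$, and the lower bound step shows there are at least $n_j$ in each $\mathring\Gamma_j$; summing the latter already exhausts the full degree $\abs{\bm n}$, so there can be no further zeros and in fact exactly $n_j$ in each $\mathring\Gamma_j$. Simplicity of each zero was already established in the upper bound step (via $(x-z_0)^2\bm\mu$ being Angelesco, hence normal, for real $z_0$, combined with Remark~\ref{rem:multiplicityII}). This completes the proof. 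The only mildly delicate point is the bookkeeping with shared endpoints: if two intervals $\Gamma_i,\Gamma_j$ meet at a single point $p$, then $p\notin\mathring\Gamma_i\cup\mathring\Gamma_j$ so it contributes to neither count, and the argument goes through unchanged; I do not expect any real obstacle here, since all the analytic content is already packaged in Theorem~\ref{thm:zerosII} and Proposition~\ref{prop:quasiorthogonal}.
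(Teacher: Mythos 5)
Your argument is correct and is essentially the paper's own: the paper establishes $\calZ[P_{\bm n}]\subset\bigcup_j\mathring\Gamma_j$ with simple zeros in the paragraph preceding the corollary (via Theorem~\ref{thm:zerosII} and Remarks~\ref{rem:multiplicityII}, \ref{rem:complexZerosII} applied to the Angelesco systems $(x-z_0)\bm\mu$ and $|x-z_0|^2\bm\mu$), and then the corollary's proof combines $\deg P_{\bm n}=|\bm n|$ with the lower bound of Proposition~\ref{prop:quasiorthogonal} on each disjoint $\mathring\Gamma_j$, exactly as you do. Your write-up is just a more explicit assembly of the same two steps.
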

\begin{proof}
    Since $\bm{n}$ is normal, we have $\deg P_{\bm{n}} =|\bm{n}|$. By Proposition~\ref{prop:quasiorthogonal}, $P_{\bm{n}}$ has at least $n_j$ zeros on each $\mathring{\Gamma}_j$. Since $\mathring{\Gamma}_j$'s are pair-wise disjoint, we obtain the statement.
\end{proof}


\subsection{Zero location for Type I multiple orthogonal polynomials}\label{ss:locationI}
\hfill\\

\begin{thm}\label{thm:zerosI}
    Let $\bm{n}$ be normal with respect to $\bm{\mu}$. Then 
    \begin{equation}
        \calZ[A^{(1)}_{\bm{n}}] = \left\{
        z_0\in\bbC: \bm{n} -\bm{e}_1 \mbox{ is not normal for } ((x-z_0)\mu_1,\mu_2,\ldots,\mu_r) \right\}.
    \end{equation}
    Analogous statements work for $A_{\bm{n}}^{(j)}$ with any $j$.
\end{thm}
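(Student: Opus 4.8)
The plan is to run the same argument as in the proof of Theorem~\ref{thm:zerosII}, replacing the type II relations by the homogeneous form of the type I relations and keeping track of the index shift $\bm{n}\mapsto\bm{n}-\bm{e}_1$ that is forced when one divides the factor $x-z_0$ out of $A^{(1)}_{\bm{n}}$. Throughout I write $\bm{\nu}:=((x-z_0)\mu_1,\mu_2,\ldots,\mu_r)$ and assume $n_1\ge 1$ (when $n_1=0$ the statement is degenerate, since then $A^{(1)}_{\bm{n}}\equiv 0$). The general index $j$ is handled identically after relabelling the measures, so I treat only $j=1$. The tool used repeatedly is Remark~\ref{rem:normality}: an index fails to be normal exactly when the associated homogeneous type I system, with the prescribed degree bounds and orthogonality against $x^k$ up to $k=|\cdot|-1$, has a non-zero solution.

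For the inclusion $\calZ[A^{(1)}_{\bm{n}}]\subseteq\{z_0:\bm{n}-\bm{e}_1\text{ not normal for }\bm{\nu}\}$, I would assume $A^{(1)}_{\bm{n}}(z_0)=0$ and write $A^{(1)}_{\bm{n}}(x)=(x-z_0)B(x)$ with $\deg B\le n_1-2$ (with $B\equiv 0$ if $A^{(1)}_{\bm{n}}\equiv 0$). The vector $(B,A^{(2)}_{\bm{n}},\ldots,A^{(r)}_{\bm{n}})$ is non-zero (it vanishes iff $\bm{A}_{\bm{n}}$ does), meets the degree bounds for the index $\bm{n}-\bm{e}_1$, and for $k=0,\ldots,|\bm{n}|-2$ satisfies $\int B(x)x^k(x-z_0)\,d\mu_1(x)+\sum_{j\ge 2}\int A^{(j)}_{\bm{n}}(x)x^k\,d\mu_j(x)=\sum_{j}\int A^{(j)}_{\bm{n}}(x)x^k\,d\mu_j(x)=0$ by \eqref{eq:moprlI}. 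By Remark~\ref{rem:normality}, $\bm{n}-\bm{e}_1$ is not normal for $\bm{\nu}$.

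For the reverse inclusion, I would start from a non-zero solution $(\tilde A^{(1)},\ldots,\tilde A^{(r)})$ of the homogeneous system for $\bm{n}-\bm{e}_1$ with respect to $\bm{\nu}$ furnished by Remark~\ref{rem:normality}, with $\deg\tilde A^{(1)}\le n_1-2$ and $\deg\tilde A^{(j)}\le n_j-1$ for $j\ge 2$. Then $\bm{C}:=((x-z_0)\tilde A^{(1)},\tilde A^{(2)},\ldots,\tilde A^{(r)})$ is non-zero, satisfies the degree bounds attached to $\bm{n}$, and kills all moments $k=0,\ldots,|\bm{n}|-2$ against $\bm{\mu}$. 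The remaining pairing $v:=\sum_{j}\int C^{(j)}(x)x^{|\bm{n}|-1}\,d\mu_j(x)$ cannot be $0$, for otherwise $\bm{C}$ would witness non-normality of $\bm{n}$ for $\bm{\mu}$, contrary to hypothesis. Hence $v^{-1}\bm{C}$ satisfies \eqref{eq:moprlI}, so by uniqueness of the type I polynomial $v^{-1}\bm{C}=\bm{A}_{\bm{n}}$; in particular $A^{(1)}_{\bm{n}}$ is a scalar multiple of $(x-z_0)\tilde A^{(1)}$ and therefore vanishes at $z_0$.

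The argument is short and essentially forced; the one place I expect to require care is the reverse inclusion, where one must exclude the ``over-orthogonal'' alternative $v=0$ --- this is precisely the step that consumes the hypothesis that $\bm{n}$ is normal for the original system $\bm{\mu}$, exactly as normality was used in Theorem~\ref{thm:zerosII}. A secondary bookkeeping point to verify is that dividing $x-z_0$ out of $A^{(1)}_{\bm{n}}$ drops the allowable first-component degree from $n_1-1$ to $n_1-2$, which matches the degree bound for the first component at the index $\bm{n}-\bm{e}_1$, so the shift is consistent and no normalization is lost beyond the single scalar $v$.
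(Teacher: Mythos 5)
Your proposal is correct and follows essentially the same route as the paper: divide out $x-z_0$ from the first component to pass between solutions of the homogeneous type I system at $\bm{n}-\bm{e}_1$ for the Christoffel-transformed system and the polynomial $\bm{A}_{\bm{n}}$ itself, invoking Remark~\ref{rem:normality} in one direction and normality/uniqueness at $\bm{n}$ in the other. The only difference is that you spell out explicitly why the pairing $v$ against $x^{|\bm{n}|-1}$ is non-zero, a step the paper compresses into ``up to a multiplicative normalization''; this is a correct and welcome clarification, not a deviation.
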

\begin{rem}\label{rem:multiplicityI}
    The same proof shows that if $z_0$ is a zero of $A^{(1)}_{\bm{n}}$ of multiplicity $\ge 2$ then $\bm{n}-2\bm{e}_1$ and $\bm{n}-\bm{e}_1$ are not normal for $((x-z_0)^2 \mu_1,\mu_2,\ldots,\mu_r)$. Analogous statements work for $A_{\bm{n}}^{(j)}$ with any $j$.
\end{rem}
\begin{rem}\label{rem:complexZerosI}
    Suppose each $\mu_j\in\mathcal{M}(\bbR)$ and let $z_0\in\bbC\setminus\bbR$. Then the same proof shows that $A^{(1)}_{\bm{n}}(z_0)=0$ implies that $\bm{n}-2\bm{e}_1$ and  $\bm{n}-\bm{e}_1$ are not normal for $(|x-z_0|^2 \mu_1,\mu_2,\ldots,\mu_r)$. Analogous statements work for $A_{\bm{n}}^{(j)}$ with any $j$.
\end{rem}
\begin{proof}
    Let us denote $((x-z_0)\mu_1,\mu_2,\ldots,\mu_r)$ by $\widehat{\bm{\mu}}=(\widehat{\mu}_1,\ldots,\widehat{\mu}_r)$. Since $\bm{n}$ is $\bm{\mu}$-normal, we have
    \begin{equation}\label{eq:typeIlocationProof}
    \sum_{j = 1}^r\int A_{\bm{n}}^{(j)}(x) x^k d\mu_j(x) = 0, \qquad k = 0,\dots,\abs{\bm{n}}-2.
    \end{equation}
    If $A_{\bm{n}}^{(1)}(z_0) = 0$, then let $\bm{B}(x)$ be  $(A_{\bm{n}}^{(1)}(x)/(x-z_0),A_{\bm{n}}^{(2)}(x),\dots,A_{\bm{n}}^{(r)}(x))$. By~\eqref{eq:typeIlocationProof},
    \begin{equation}\label{eq:typeIlocationProof2}
    \sum_{j = 1}^r\int B^{(j)}(x) x^k d\widehat{\mu}_j(x) = 0, \qquad k = 0,\dots,\abs{\bm{n}}-2.
    \end{equation}
    Since $\deg{B^{(1)}} \leq n_1-2$ and $\deg{B^{(j)}} \leq n_j-1$ for $j = 2,\dots,r$, Remark \ref{rem:normality} shows that $\bm{n}-\bm{e}_1$ is not $\widehat{\bm{\mu}}$-normal. 
    
    Conversely, if $\bm{n}-\bm{e}_1$ is not normal for $\widehat{\bm{\mu}}$ then by Remark \ref{rem:normality} there is a vector $\bm{B} = (B^{(1)},\dots,B^{(r)}) \neq \bm{0}$ with $\deg{B^{(1)}} \leq n_1-2$ and $\deg{B^{(j)}} \leq n_j-1$ for $j = 2,\dots,r$, that satisfies~\eqref{eq:typeIlocationProof2}. Then $\bm{A}_{\bm{n}}(x)$ must be equal to $(B^{(1)}(x)(x-z_0),B^{(2)}(x),\dots,B^{(r)}(x))$, up to a multiplicative normalization, so $A_{\bm{n}}^{(1)}(z_0) = 0$.


    As for the remarks, assume that $A_{\bm{n}}^{(1)}(z_0) = 0$ of multiplicity $\ge 2$. Denote $((x-z_0)^2\mu_1,\mu_2,\ldots,\mu_r)$ by $\widehat{\widehat{\bm{\mu}}}=(\widehat{\widehat{\mu}}_1,\ldots,\widehat{\widehat{\mu}}_r)$ and let $\bm{B}(x)$ be the system $(A_{\bm{n}}^{(1)}(x)/(x-z_0)^2,A_{\bm{n}}^{(2)}(x),\dots,A_{\bm{n}}^{(r)}(x))$. By~\eqref{eq:typeIlocationProof},
    \begin{equation}\label{eq:typeIlocationProof2}
    \sum_{j = 1}^r\int B^{(j)}(x) x^k d\widehat{\widehat{\mu}}_j(x) = 0, \qquad k = 0,\dots,\abs{\bm{n}}-2.
    \end{equation}
    Since $\deg{B^{(1)}} \leq n_1-3$ and $\deg{B^{(j)}} \leq n_j-1$ for $j = 2,\dots,r$, Remark \ref{rem:normality} shows that both $\bm{n}-2\bm{e}_1$ and $\bm{n}-\bm{e}_1$ are not $\widehat{\bm{\mu}}$-normal. 
\end{proof}

\begin{cor}
    Let $\bm{\mu}$ be an Angelesco system. Then $\calZ[A^{(j)}_{\bm{n}}]\subset \mathring{\Gamma}_j$ for any $j=1,\ldots,r$, and each zero is simple.
\end{cor}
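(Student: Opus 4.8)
The plan is to imitate the proof of Corollary~\ref{cor:ATtypeII} and the argument for Angelesco type II polynomials (Corollary~\ref{cor:angelesco zero location}), but working with Theorem~\ref{thm:zerosI} and the asymmetrically-modified system $((x-z_0)\mu_1,\mu_2,\ldots,\mu_r)$ in place of $(x-z_0)\bm{\mu}$. By symmetry it suffices to treat $A^{(1)}_{\bm{n}}$. First I would establish that the zeros avoid $\bbC\setminus\mathring{\Gamma}_1$: if $z_0\in\bbR\setminus\mathring{\Gamma}_1$, then $((x-z_0)\mu_1,\mu_2,\ldots,\mu_r)$ is again an Angelesco system (the factor $x-z_0$ is sign-definite on $\Gamma_1$, so it only rescales $\mu_1$ by a sign-preserving weight and leaves the support inside $\Gamma_1$), hence $\bm{n}-\bm{e}_1$ is normal for it by Theorem~\ref{thm:Angelesco}, and Theorem~\ref{thm:zerosI} gives $z_0\notin\calZ[A^{(1)}_{\bm{n}}]$. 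For $z_0\in\bbC\setminus\bbR$, the same reasoning applied to $(|x-z_0|^2\mu_1,\mu_2,\ldots,\mu_r)$ together with Remark~\ref{rem:complexZerosI} rules $z_0$ out. Thus $\calZ[A^{(1)}_{\bm{n}}]\subset\mathring{\Gamma}_1$, and simplicity follows from Remark~\ref{rem:multiplicityI} and the Angelesco property of $((x-z_0)^2\mu_1,\mu_2,\ldots,\mu_r)$ for every $z_0\in\bbR$.

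At this stage we only know $A^{(1)}_{\bm{n}}$ has \emph{some} zeros, all inside $\mathring{\Gamma}_1$, but we have not yet pinned down their number. The cleanest route is to read off the exact zero count from an orthogonality relation. The key observation is that the combined function $Q(x):=A^{(1)}_{\bm{n}}(x)$, viewed against $d\mu_1$ alone, inherits a quasi-orthogonality property: from~\eqref{eq:moprlI} and the Nikishin/Angelesco structure one expects $A^{(1)}_{\bm{n}}$ to be orthogonal to polynomials of degree up to roughly $n_1-1$ with respect to a sign-definite measure supported on $\Gamma_1$ (obtained from $\mu_1$ by a Christoffel-type transform involving the other $A^{(j)}_{\bm{n}}$ and the measures $\mu_j$, $j\ge 2$). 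Making this precise is the main obstacle: one must produce the right auxiliary measure on $\Gamma_1$ and verify it is of constant sign, then invoke Proposition~\ref{prop:quasiorthogonal} to force $\deg$ of the relevant factor to be at least $n_1-1$. Since $\deg A^{(1)}_{\bm{n}}\le n_1-1$ with equality in the normal case (this needs a brief check that normality forces exact degree $n_1-1$ for at least one component, and in the Angelesco case for the first component specifically), we then conclude $A^{(1)}_{\bm{n}}$ has exactly $n_1-1$ simple zeros, all in $\mathring{\Gamma}_1$.

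Alternatively, and perhaps more in the spirit of the preceding corollaries, one can avoid constructing the auxiliary measure explicitly: Theorem~\ref{thm:zerosI} identifies $\calZ[A^{(1)}_{\bm{n}}]$ as the set of $z_0$ for which $\bm{n}-\bm{e}_1$ fails to be normal for $((x-z_0)\mu_1,\mu_2,\ldots,\mu_r)$, i.e. the set where $\det H_{\bm{n}-\bm{e}_1}[((x-z_0)\mu_1,\mu_2,\ldots,\mu_r)]=0$. By Remark~\ref{rem:signAngelesco} this determinant, as a function of $z_0$, is (up to a nonzero constant) a polynomial in $z_0$ of degree $n_1-1$ whose sign is controlled by the Angelesco integral representation; counting its real zeros via the sign-definiteness of that representation on $\Gamma_1$ yields exactly $n_1-1$ zeros there. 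I would present the argument using whichever of these two is shorter to write cleanly, flagging the degree-and-sign bookkeeping for $\det H_{\bm{n}-\bm{e}_1}$ as the step requiring the most care; the rest is a direct transcription of the AT and Angelesco type II arguments already given.
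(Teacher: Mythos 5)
Your first paragraph is precisely the paper's proof and already constitutes a complete argument for the stated corollary: real $z_0\notin\mathring{\Gamma}_j$ are excluded via Theorem~\ref{thm:zerosI} together with the Angelesco property of $((x-z_0)\mu_1,\mu_2,\ldots,\mu_r)$, non-real $z_0$ via Remark~\ref{rem:complexZerosI} applied to $(|x-z_0|^2\mu_1,\mu_2,\ldots,\mu_r)$, and simplicity via Remark~\ref{rem:multiplicityI} applied to $((x-z_0)^2\mu_1,\mu_2,\ldots,\mu_r)$. The remaining two paragraphs aim at an exact count of $n_1-1$ zeros, which is not part of the statement being proved, so those admittedly unfinished sketches (the auxiliary quasi-orthogonality measure, the degree bookkeeping for $\det H_{\bm{n}-\bm{e}_1}$) should simply be deleted rather than completed.
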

\begin{proof}
    For any $j$ and any $z_0\in\bbR\setminus \mathring{\Gamma}_j$, replacing $\mu_j$ with $(x-z_0)\mu_j$ leads to another Angelesco system. By Theorems~\ref{thm:Angelesco} and~\ref{thm:zerosI}, we get that $z_0$ is not a zero of $A^{(j)}_{\bm{n}}$. 

    Let $z_0\in\bbC\setminus \bbR$. Then Remark~\ref{rem:complexZerosI} together with the Angelesco property of $(|x-z_0|^2 \mu_1,\mu_2,\ldots,\mu_r)$ shows  that $z_0$ is not a zero of any $A^{(j)}_{\bm{n}}$. 

    Simplicity of zeros follows from Remark~\ref{rem:multiplicityI} and the Angelesco property of $((x-z_0)^2 \mu_1,\mu_2,\ldots,\mu_r)$ for $z_0\in\bbR$.
\end{proof}

\subsection{Zero location in Nikishin Systems}\label{ss:locationINikishin}
\hfill\\

Note that if $\bm\mu$ is AT on $\Gamma$, then if we replace $\mu_j$ with $(x-z_0)\mu_j$ with $z_0\in\bbR\setminus\Gamma$, then the new system need not satisfy the AT property anymore. This shows that one should not expect any general results about the zeros of type I multiple polynomials for AT systems. This should not be surprising, since it is known that they do not even have to be real-rooted in general. We can say something about zeros of type I polynomials for Nikishin system, however. For simplicity we only consider the case $r=2$. We will employ the following simple result.

\begin{lemma}\label{lem:perturbation}
	Let $\bm{\mu}:=(\mu_1,\mu_2)$ and $\widetilde{\bm{\mu}}:=(\mu_1,\widetilde{\mu}_2)$, where
	\begin{equation}
		d\widetilde{\mu}_2(x) = d\mu_2(x) + Q(x) d\mu_1(x), \quad Q(x) = \sum_{j=0}^s k_j x^j.
	\end{equation}
	Then for any $\bm{n}=(n_1,n_2)\in\bbN^2$ with $n_2 \le n_1-s$, 
    $\bm{n}$ is normal with respect to $\bm{\mu}$ if and only if it is normal with respect to $\widetilde{\bm{\mu}}$. In this case,
    \begin{align}
        \widetilde{P}_{\bm{n}}(x) &= P_{\bm{n}}(x), \\
        \widetilde{A}_{\bm{n}}^{(1)}(x) &= A_{\bm{n}}^{(1)}(x) - Q(x)A_{\bm{n}}^{(2)}(x),\\
        \widetilde{A}_{\bm{n}}^{(2)}(x) &= A_{\bm{n}}^{(2)}(x),
    \end{align}
    where $\widetilde{P}_{\bm{n}}$ are the type II and $\bm{\widetilde{A}}_{\bm{n}} = (\widetilde{A}_{\bm{n}}^{(1)},\widetilde{A}_{\bm{n}}^{(2)})$ are the type I polynomials with respect to $\widetilde{\bm{\mu}}$.
\end{lemma}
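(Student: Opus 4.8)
The plan is to work directly with the moment matrix $H_{\bm{n}}$ from~\eqref{eq:D}, or equivalently with the orthogonality relations in the form of Remark~\ref{rem:normality}, and exhibit an explicit invertible linear change of variables between solutions for $\bm{\mu}$ and solutions for $\widetilde{\bm{\mu}}$. The key observation is bookkeeping about degrees: the moments of $\widetilde{\mu}_2$ are $\widetilde c^{(2)}_k = c^{(2)}_k + \sum_{j=0}^s k_j c^{(1)}_{k+j}$, so the block of $H_{\bm{n}}[\widetilde{\bm{\mu}}]$ coming from $\widetilde{\mu}_2$ is obtained from the corresponding block of $H_{\bm{n}}[\bm{\mu}]$ by adding a linear combination (with coefficients $k_j$, shifted by $j$ columns) of rows of the $\mu_1$-block. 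For this row operation to stay inside the matrix $H_{\bm{n}}[\bm{\mu}]$ we need enough rows in the $\mu_1$-block, i.e. enough that a $j$-shifted copy of a length-$|\bm{n}|$ row of the $\mu_2$-block is still a row of the $\mu_1$-block; this is exactly where the hypothesis $n_2 \le n_1 - s$ enters. Since row operations do not change the determinant, $\det H_{\bm{n}}[\bm{\mu}] = \pm\det H_{\bm{n}}[\widetilde{\bm{\mu}}]$, giving the equivalence of normality.

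For the explicit formulas I would argue at the level of the defining relations rather than the matrices. For type II: if $P_{\bm{n}}$ satisfies~\eqref{eq:moprlII} for $\bm{\mu}$, then $\int P_{\bm{n}} x^k \, d\widetilde\mu_2 = \int P_{\bm{n}} x^k\, d\mu_2 + \int P_{\bm{n}} x^k Q\, d\mu_1$, and since $\deg(x^k Q) = k + s \le (n_2 - 1) + s \le n_1 - 1$ for $k \le n_2-1$, the second integral vanishes by the $\mu_1$-orthogonality~\eqref{eq:moprlII}; the $\mu_1$-relations themselves are untouched, so $\widetilde P_{\bm{n}} = P_{\bm{n}}$ by uniqueness. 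For type I: given $\bm{A}_{\bm{n}} = (A^{(1)}_{\bm{n}}, A^{(2)}_{\bm{n}})$ for $\bm{\mu}$, set $\widetilde A^{(1)} := A^{(1)}_{\bm{n}} - Q A^{(2)}_{\bm{n}}$ and $\widetilde A^{(2)} := A^{(2)}_{\bm{n}}$; then
\begin{align*}
    \widetilde A^{(1)} \, d\mu_1 + \widetilde A^{(2)}\, d\widetilde\mu_2
    &= \bigl(A^{(1)}_{\bm{n}} - Q A^{(2)}_{\bm{n}}\bigr)\, d\mu_1 + A^{(2)}_{\bm{n}}\, d\mu_2 + Q A^{(2)}_{\bm{n}}\, d\mu_1 \\
    &= A^{(1)}_{\bm{n}}\, d\mu_1 + A^{(2)}_{\bm{n}}\, d\mu_2,
\end{align*}
so all the moment conditions~\eqref{eq:moprlI} are preserved verbatim. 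The degree check is that $\deg \widetilde A^{(1)} \le \max(n_1 - 1, s + n_2 - 1) = n_1 - 1$, again using $n_2 \le n_1 - s$, so $\widetilde{\bm{A}}_{\bm{n}}$ has the correct degree pattern, and by uniqueness it is the type I polynomial for $\widetilde{\bm{\mu}}$. The converse directions follow by running the same substitution with $Q$ replaced by $-Q$ (note $\widetilde{\widetilde{\mu}}_2 = \mu_2$).

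The only genuinely delicate point is the degree bookkeeping — making sure the inequality $n_2 \le n_1 - s$ is used in precisely the right place and that the transformation is invertible (i.e. that applying it to $\widetilde{\bm{\mu}}$ with $-Q$ lands back in the original data), which is immediate here since the map $\bm{A} \mapsto \widetilde{\bm{A}}$ is unipotent lower-triangular in the obvious basis. I expect no real obstacle beyond this; the proof is essentially a direct verification once the right formulation (Remark~\ref{rem:normality}) is in hand.
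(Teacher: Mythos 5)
Your proposal is correct and follows essentially the same route as the paper: the same row-operation argument on the moment matrix $H_{\bm{n}}$ (using $n_2\le n_1-s$ to ensure the shifted moment rows of $\mu_1$ are actually present in the top block), and the same direct verification of the formulas via the identity $\widetilde A^{(1)}d\mu_1+\widetilde A^{(2)}d\widetilde\mu_2=A^{(1)}d\mu_1+A^{(2)}d\mu_2$ together with the degree count $\deg(QA^{(2)}_{\bm{n}})\le s+n_2-1\le n_1-1$. The only cosmetic difference is that you map $\bm{A}_{\bm{n}}\mapsto\widetilde{\bm{A}}_{\bm{n}}$ and invoke uniqueness for $\widetilde{\bm{\mu}}$ while the paper runs the substitution in the reverse direction (and the $\pm$ in your determinant identity should just be $=$, since only row additions are used).
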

\begin{proof}
	Observe that $H_{n_1,|\bm{n}|}^{(1)}$ block is common for both ${H}_{\bm{n}}[\bm\mu]$ and ${{H}}_{\bm{n}} [\widetilde{\bm{\mu}}]$. Now note that the $k$-th row of  $\widetilde{H}_{n_2,|\bm{n}|}^{(2)}$ ($k=1,\ldots,n_2$) is 
	\begin{multline}
		\big(\widetilde{c}_{k-1}^{(2)}, \widetilde{c}_{k}^{(2)},\ldots, \widetilde{c}_{k-2+|\bm{n}|}^{(2)}\big)
		=
		\big({c}_{k-1}^{(2)}, {c}_{k}^{(2)},\ldots, {c}_{k-2+|\bm{n}|}^{(2)}\big)
		\\
		+\sum_{j=0}^s k_j
		\big({c}_{k+j-1}^{(1)}, {c}_{k+j}^{(1)},\ldots, {c}_{k+j-2+|\bm{n}|}^{(1)}\big).
	\end{multline}
	If $n_2 \le n_1-s$, each row of moments in the last sum can be canceled in $\det {{H}}_{\bm{n}} [\widetilde{\bm{\mu}}]$ by subtracting a multiple of the corresponding row of $H_{n_1,|\bm{n}|}^{(1)}$ (use $n_2 \le n_1-s$). This shows that $\det {{H}}_{\bm{n}} [\widetilde{\bm{\mu}}]$  is equal  to $\det {{H}}_{\bm{n}} [{\bm{\mu}}]$.

    Now we write
    \begin{align}
        & \int \widetilde{A}_{\bm{n}}^{(1)}(x)x^p d{\mu}_1(x) + \int \widetilde{A}_{\bm{n}}^{(2)}(x)x^pd\widetilde{\mu}_2(x) \\ & = \int \big(\widetilde{A}_{\bm{n}}^{(1)}(x) 
+ Q(x)\widetilde{A}_{\bm{n}}^{(2)}(x)\big)x^p d\mu_1(x) + \int \widetilde{A}_{\bm{n}}^{(2)}(x)x^pd\mu_2(x).
    \end{align}
    If $n_2\le n_1-s$ then $\deg{QA_{\bm{n}}^{(2)}} \leq n_1-1$. Hence by normality of $\bm{n}$ for $\bm{\mu}$, we see that $\bm{\widetilde{A}}_{\bm{n}} = (A_{\bm{n}}^{(1)} - QA_{\bm{n}}^{(2)},A_{\bm{n}}^{(2)})$. The proof of $\widetilde{P}_{\bm{n}} = P_{\bm{n}}$ is similar.
\end{proof}

\begin{thm}\label{thm:NikishintypeI}
    Let $(\mu_1,\mu_2)=\calN(\sigma_1,\sigma_2)$ be a Nikishin system. 
    \begin{enumerate}[(i)]
        \item Let $\bm{n}\in\bbN^2$ with $n_1 +1 \le n_2$, $n_1\ne 0$. Then $A^{(1)}_{\bm{n}}$ is real-rooted and 
        \begin{equation}\label{eq:NikishinTypeI1}
        \calZ[A^{(1)}_{\bm{n}}]\subset \mathring{\Gamma}_2,
        \end{equation}
        and each zero is simple.
        \item Let $\bm{n}\in\bbN^2$ with $n_1+1 \ge n_2$, $n_2 \ne 0$. Then $A^{(2)}_{\bm{n}}$ is real-rooted and  
        \begin{equation}\label{eq:NikishinTypeI2}
            \calZ[A^{(2)}_{\bm{n}}]\subset \mathring{\Gamma}_2,
        \end{equation}            
        and each zero is simple.
    \end{enumerate}
\end{thm}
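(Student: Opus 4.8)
The plan is to convert each zero‑location statement into a normality statement via Theorem~\ref{thm:zerosI}, and then exploit the Christoffel identities for Nikishin measures, Lemma~\ref{lem:perturbation}, and perfectness of Nikishin systems — with one extra ingredient (the duality of a Nikishin system with its reversal) needed for item~(i). For \textbf{item (ii)} ($A^{(2)}_{\bm n}$, $n_1+1\ge n_2$), Theorem~\ref{thm:zerosI} for the second component identifies $\calZ[A^{(2)}_{\bm n}]$ with the set of $z_0$ for which $\bm n-\bm e_2=(n_1,n_2-1)$ is not normal for $(\sigma_1,(x-z_0)\langle\sigma_1,\sigma_2\rangle)$, so it suffices to show this index is normal for every $z_0\notin\mathring\Gamma_2$ (which covers $z_0\in\mathring\Gamma_1$). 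The algebraic heart is the identity, immediate from~\eqref{eq:mChristoffel},
\begin{equation*}
(x-z_0)\langle\sigma_1,\sigma_2\rangle=\langle\sigma_1,(t-z_0)\sigma_2\rangle-\Big(\textstyle\int(t-z_0)\,d\sigma_2\Big)\,\sigma_1 ,
\end{equation*}
together with its $(x-z_0)^2$‑ and $|x-z_0|^2$‑analogues, in which a polynomial of degree $\le 1$ times $\sigma_1$ is subtracted. For $z_0\notin\mathring\Gamma_2$ the measures $(t-z_0)\sigma_2$, $(t-z_0)^2\sigma_2$, $|t-z_0|^2\sigma_2$ lie in $\calM(\Gamma_2)$, so $\calN(\sigma_1,(t-z_0)\sigma_2)$, etc., are genuine Nikishin systems, hence perfect; and since Lemma~\ref{lem:perturbation} involves only moments it applies verbatim even though $(x-z_0)\langle\sigma_1,\sigma_2\rangle$ is merely a signed measure when $z_0\in\mathring\Gamma_1$. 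Applying it (with $s=0$ for the first identity, $s=1$ for the others) transfers normality of $(n_1,n_2-1)$ from these Nikishin systems to the transformed systems under the hypotheses $n_2-1\le n_1$ and $n_2-2\le n_1-1$ respectively — both being exactly the standing assumption $n_1+1\ge n_2$. Thus the plain identity rules out real zeros off $\mathring\Gamma_2$, the $|x-z_0|^2$‑identity with Remark~\ref{rem:complexZerosI} rules out non‑real zeros, and the $(x-z_0)^2$‑identity with Remark~\ref{rem:multiplicityI} gives simplicity; hence $\calZ[A^{(2)}_{\bm n}]\subset\mathring\Gamma_2$ with simple zeros, and $A^{(2)}_{\bm n}$ is real‑rooted.

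For \textbf{item (i)} ($A^{(1)}_{\bm n}$, $n_1+1\le n_2$) the same route does not close: Theorem~\ref{thm:zerosI} reduces $\calZ[A^{(1)}_{\bm n}]$ to non‑normality of $(n_1-1,n_2)$ for $((x-z_0)\sigma_1,\langle\sigma_1,\sigma_2\rangle)$, and now one is Christoffel‑transforming the \emph{outer} generator $\sigma_1$; the decomposition $\langle\sigma_1,\sigma_2\rangle=\langle(x-z_0)\sigma_1,\tfrac{1}{t-z_0}\sigma_2\rangle+m_{\sigma_2}(z_0)\,\sigma_1$ presents the second measure as a Nikishin second measure over $(x-z_0)\sigma_1$ perturbed by $m_{\sigma_2}(z_0)\sigma_1=\tfrac{m_{\sigma_2}(z_0)}{x-z_0}\cdot(x-z_0)\sigma_1$, which is \emph{not} a polynomial multiple of the new first measure, so Lemma~\ref{lem:perturbation} is inapplicable and the reduction stays circular. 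My plan is instead to pass to the reversed system $\calN(\sigma_2,\sigma_1)$, whose first interval is $\Gamma_2$, and to invoke the duality for type~I multiple orthogonal polynomials of Nikishin systems: the type~I data of $\calN(\sigma_1,\sigma_2)$ at $(n_1,n_2)$ is, up to nonzero scalars, the type~I (and associated second‑kind) data of $\calN(\sigma_2,\sigma_1)$ at the reversed index $(n_2,n_1)$, under which $A^{(1)}_{(n_1,n_2)}$ is carried to a polynomial already located inside $\mathring\Gamma_2$ by the results in hand for $\calN(\sigma_2,\sigma_1)$ — the Nikishin analogue of Corollary~\ref{cor:ATtypeII} for type~II polynomials (zeros simple and in the first interval), or equivalently item~(ii) applied to $\calN(\sigma_2,\sigma_1)$, whose reversed index $(n_2,n_1)$ satisfies $n_2+1\ge n_1$ (automatic from $n_2\ge n_1+1$) and $n_1\ne 0$. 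The non‑real and multiplicity parts are read off the same correspondence.

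The main obstacle is precisely this last ingredient: setting up — or citing with matching conventions — the Nikishin type~I duality and pinning down exactly which object of $\calN(\sigma_2,\sigma_1)$ at $(n_2,n_1)$ is proportional to $A^{(1)}_{(n_1,n_2)}$, including the degree bookkeeping (which is why the naive Christoffel reduction must fail for item~(i)). Everything else is routine: the Christoffel identities are direct consequences of~\eqref{eq:mChristoffel}; the index constraints in Lemma~\ref{lem:perturbation} match the stated hypotheses on the nose; and the only mild technicality — that the comparison Nikishin systems stay genuine and the identities stay valid finite‑measure identities when $z_0\in\mathring\Gamma_1$ or $z_0$ is a common endpoint of $\Gamma_1,\Gamma_2$ — holds because only the inner generator $\sigma_2$ is Christoffel‑transformed, and $(t-z_0)\sigma_2$ stays sign‑definite precisely for $z_0\notin\mathring\Gamma_2$.
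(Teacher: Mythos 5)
Your part (ii) is, up to notation, exactly the paper's argument: the reduction through Theorem~\ref{thm:zerosI} applied to the second component, the identity $(x-z_0)m_{\sigma_2}(x)=m_{(t-z_0)\sigma_2}(x)-c$ coming from \eqref{eq:mChristoffel} (your constant should be $\int d\sigma_2$ rather than $\int (t-z_0)\,d\sigma_2$, but only its constancy is used), perfectness of the comparison Nikishin systems $\calN(\sigma_1,(t-z_0)\sigma_2)$, and Lemma~\ref{lem:perturbation} with $s=0$, resp.\ $s=1$, for the $(x-z_0)^2$- and $|x-z_0|^2$-variants; the index arithmetic matches $n_1+1\ge n_2$ on the nose. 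That half is correct.

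Part (i) has a genuine gap, which you yourself flag as the ``main obstacle'': the reversal duality you want to invoke is neither proved nor, as stated, capable of giving the result. If $A^{(1)}_{(n_1,n_2)}$ of $\calN(\sigma_1,\sigma_2)$ were proportional to a type II polynomial of $\calN(\sigma_2,\sigma_1)$ at $(n_2,n_1)$, the degrees (at most $n_1-1$ versus exactly $n_1+n_2$) could not match; if it were proportional to $A^{(2)}_{(n_2,n_1)}$ of $\calN(\sigma_2,\sigma_1)$, then part (ii) applied to that system would locate its zeros in the interior of the interval of the \emph{second} generator of $\calN(\sigma_2,\sigma_1)$, which is $\mathring{\Gamma}_1$, not the claimed $\mathring{\Gamma}_2$. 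The paper avoids any such duality: it uses only the trivial relabeling $(\mu_1,\mu_2)\mapsto(\mu_2,\mu_1)$ of the two orthogonality measures (not of the generators), under which $A^{(1)}_{(n_1,n_2)}=\widetilde{A}^{(2)}_{(n_2,n_1)}$ exactly, and then the stripping relation \eqref{eq:strippingM}, $1/m_{\sigma_2}(z)=b_1-z-a_1^2\,m_{\sigma_2^{(1)}}(z)$, to write $\mu_1=(b_1-x)\mu_2-a_1^2\,m_{\sigma_2^{(1)}}(x)\,\mu_2$. Hence $(\mu_2,\mu_1)$ is a degree-one polynomial perturbation (Lemma~\ref{lem:perturbation} with $s=1$, valid precisely when $n_1\le n_2-1$) of the genuine Nikishin system $(\mu_2,\,m_{\sigma_2^{(1)}}\mu_2)$ with outer interval $\Gamma_1$ and inner interval $\Gamma_2$, to which part (ii) applies and yields zeros in $\mathring{\Gamma}_2$. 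This use of \eqref{eq:strippingM} --- that $1/m_{\sigma_2}$ is, up to an affine polynomial, again an $m$-function of a measure supported in $\Gamma_2$ --- is the missing idea; your correct diagnosis that the naive Christoffel reduction fails for the outer generator does not by itself supply a substitute, so part (i) is not established in your proposal.
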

\begin{rem}\label{rem:simulationsNikishin}
    Our simulations clearly show that 
    the above statement is optimal  in the sense that ~\eqref{eq:NikishinTypeI1} does not have to hold if $n_1+1>n_2$ and~\eqref{eq:NikishinTypeI2} does not have to hold if $n_1+1<n_2$. However, real-rootedness for $A^{(1)}_{\bm{n}}$ also holds if $n_1=n_2$, and  for $A^{(2)}_{\bm{n}}$ if $n_1=n_2-2$, see Remark~\ref{rem:oneMoreDiagonal}. Beyond these locations, one typically should expect an appearance of complex zeros. 
\end{rem}
\begin{rem}    
    Theorem~\ref{thm:NikishintypeI} and Remark  ~\ref{rem:oneMoreDiagonal} show that all the type I polynomials are real-rooted along the step-line indices (which are the indices of the form $(n,n)$ and $(n,n+1)$).
\end{rem}
\begin{proof}
   \textit{(ii)} Let $z_0 \in \bbR\setminus \mathring{\Gamma}_2$.
   Note that 
    \begin{equation}\label{eq:proofNikZero}
        (\mu_1,(x-z_0)\mu_2) = (\mu_1,(x-z_0)m_{\sigma_2}(x)\mu_1).
        \end{equation}
        By~\eqref{eq:mChristoffel}, 
        \begin{equation}
            (x-z_0) m_{\sigma_2}(x) = m_{\widehat{\sigma}_2}(x) - c
        \end{equation}
        for some $c\in\bbR$, which means~\eqref{eq:proofNikZero} can be represented as
        \begin{equation}\label{eq:proofNikZero2}
            (\mu_1,m_{\widehat{\sigma}_2}(x) \mu_1 - c\mu_1).
        \end{equation}     
        Note that $\supp \,\widehat{\sigma}_2 \subseteq \supp\,{\sigma_2}\subseteq \Gamma_2$, so that $(\mu_1,m_{\widehat{\sigma}_2}(x) \mu_1)$ is Nikishin, and therefore it has every index normal. 
   Applying Lemma~\ref{lem:perturbation} (with $s=0$), we obtain that ~\eqref{eq:proofNikZero2} has every $(n_1,n_2)$ with $n_2\le n_1$  normal. Then  Theorem~\ref{thm:zerosI} proves that $z_0$ is not a zero of $A^{(2)}_{\bm{n}}$ for any $(n_1,n_2)$ with $n_2-1\le n_1$.


         Now let $z_0\in\bbC\setminus\bbR$. Consider $\widetilde{\mu}_2 = \abs{x-z_0}^2\mu_2 = (x-z_0)(x-\bar{z}_0)m_{\sigma_2}(x) \mu_1$. Apply \eqref{eq:mChristoffel} twice to get
         \begin{equation}\label{eq:doubleChristoffel}
             (\mu_1,\widetilde{\mu}_2) = (\mu_1,m_{\widetilde{\sigma}_2}(x)\mu_1 + (cx+d)\mu_1)
         \end{equation}
         for some $c \in \bbR$ and $d \in \bbC$. Since $(\mu_1,\widetilde{\mu}_2)$ is Nikishin, it is perfect. Applying Lemma \ref{thm:zerosI} (with $s=1$), we obtain that~\eqref{eq:doubleChristoffel} has every $(n_1,n_2)$ with $n_2\le n_1-1$ normal. Then Remark~\ref{rem:complexZerosI} shows that $z_0$ is not a zero of $A^{(2)}_{\bm{n}}$ for any $(n_1,n_2)$ with $n_2-2\le n_1-1$. This completes the proof of~\eqref{eq:NikishinTypeI2}. To show simplicity of zeros, apply the exact same argument but with Remark~\ref{rem:multiplicityI} instead of Remark~\ref{rem:complexZerosI}.
         

          \textit{(i)} 
          Let $\widetilde{\bm{\mu}} = (\mu_2,\mu_1) = (\widetilde{\mu}_1,m_{\sigma_2}(x)^{-1}\widetilde{\mu}_1)$, where $\widetilde{\mu}_1 = m_{\sigma_2}(x)\mu_1$. Note that $A_{n_1,n_2}^{(1)} = \widetilde{A}_{n_2,n_1}^{(2)}$. By the stripping relation~\eqref{eq:strippingM}, we have
          \begin{equation}
              \widetilde{\bm{\mu}} = (\widetilde{\mu}_1,(b-x)\widetilde{\mu}_1-am_{{\sigma}^{(1)}_2}\widetilde{\mu}_1).
          \end{equation}
          By Lemma \ref{lem:perturbation}, we then have that $(\widetilde{A}_{n_2,n_1}^{(1)},\widetilde{A}_{n_2,n_1}^{(2)})$ are the type I polynomials for $(n_2,n_1)$ with respect to $(\widetilde{\mu}_1,m_{{\sigma}^{(1)}_2}\widetilde{\mu}_1)$ if $n_1 \leq n_2-1$, and this system is Nikishin. Hence (ii) shows that for such indices $\widetilde{A}_{n_2,n_1}^{(2)}$ has all zeros in $\mathring{\Gamma}_2$, and they are all simple. This proves (i).
\end{proof}

\section{Zero interlacing}\label{ss:interlacing}

\subsection{Zero interlacing for Type II multiple orthogonal polynomials}\label{ss:interlacingII}
\hfill\\

Let $(x-z_0)^2\bm\mu$ stand for the system $((x-z_0)^2\mu_1,\dots,(x-z_0)^2\mu_r)$, where $(x-z_0)^2\mu_j$ is the double Christoffel transform of $\mu$, see Section~\ref{ss:mFunctions}. 

\begin{thm}\label{thm:interlacing type II} 
    Assume $\bm{n}$ and $\bm{n}+\bm{e}_j$ are normal for $\bm{\mu}$. Then 
    \begin{equation}
        \calZ[W(P_{\bm{n}+\bm{e}_j},P_{\bm{n}})] = \left\{
        z_0\in\bbC: \bm{n} \mbox{ is not normal for } (x-z_0)^2\bm\mu \right\}.
    \end{equation}
    In particular, if $P_{\bm{n}}$ 
    {is} real-rooted, then $P_{\bm{n}+\bm{e}_j} \sim P_{\bm{n}}$ if and only if $\bm{n}$ is normal with respect to $(x-z_0)^2\bm\mu$ for every $z_0 \in \bbR$.
\end{thm}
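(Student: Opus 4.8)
The plan is to deduce the ``in particular'' clause from the displayed equality together with Lemma~\ref{lem:wronskian interlacing}, and to prove the equality itself by a short linear-algebra argument: vanishing of $W(P_{\bm{n}+\bm{e}_j},P_{\bm{n}};z_0)$ will be matched with the existence of a polynomial having a double zero at $z_0$ inside $\spann\{P_{\bm{n}},P_{\bm{n}+\bm{e}_j}\}$. Throughout write $P:=P_{\bm{n}+\bm{e}_j}$ and $Q:=P_{\bm{n}}$; these are monic with $\deg Q=\abs{\bm{n}}$ and $\deg P=\abs{\bm{n}}+1$ (the latter exact because $\bm{n}+\bm{e}_j$ is normal), and $W(P,Q;x)=P(x)Q'(x)-P'(x)Q(x)$ has degree $2\abs{\bm{n}}$, so it is not identically zero and $\calZ[W(P,Q)]$ is finite. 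For the reduction: when $Q$ is real-rooted, $\deg P=\deg Q+1$ lets us invoke Lemma~\ref{lem:wronskian interlacing}, which says $P\sim Q$ if and only if $W(P,Q;z_0)\neq0$ for every $z_0\in\bbR$; by the displayed equality this is exactly normality of $\bm{n}$ for $(x-z_0)^2\bm{\mu}$ at every real $z_0$.

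The core step is to identify a distinguished two-dimensional space. Put
\[
    V:=\bigl\{T\in\bbC[x]:\ \deg T\le\abs{\bm{n}}+1,\ \textstyle\int T(x)x^k\,d\mu_i(x)=0\ \text{for }k=0,\dots,n_i-1,\ i=1,\dots,r\bigr\}.
\]
Inside the $(\abs{\bm{n}}+2)$-dimensional space of polynomials of degree $\le\abs{\bm{n}}+1$, the space $V$ is defined by $\abs{\bm{n}}$ linear conditions, so $\dim V\ge2$. On the other hand, sending $T\in V$ to its pair of leading coefficients (of $x^{\abs{\bm{n}}}$ and $x^{\abs{\bm{n}}+1}$) is injective: the kernel consists of $T\in V$ with $\deg T\le\abs{\bm{n}}-1$, hence is $\{0\}$ by normality of $\bm{n}$ and Remark~\ref{rem:normality}. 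Therefore $\dim V=2$; and since $Q,P\in V$ with $\deg Q\neq\deg P$, we get $V=\spann\{P,Q\}$.

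Now fix $z_0\in\bbC$. Since $W(P,Q;z_0)=P(z_0)Q'(z_0)-P'(z_0)Q(z_0)$ is the determinant of the matrix with columns $(P(z_0),P'(z_0))$ and $(Q(z_0),Q'(z_0))$, we have $W(P,Q;z_0)=0$ if and only if there is $(\alpha,\beta)\neq(0,0)$ such that $T:=\alpha P+\beta Q$ satisfies $T(z_0)=T'(z_0)=0$. Such a $T$ is nonzero (as $P,Q$ are linearly independent), so $T=(x-z_0)^2S$ with $S\neq0$ and $\deg S\le\abs{\bm{n}}-1$; and $T\in V$ reads, after dividing by $(x-z_0)^2$, as $\int S(x)x^k\,d\bigl((x-z_0)^2\mu_i\bigr)(x)=0$ for $k=0,\dots,n_i-1$ and all $i$, which by Remark~\ref{rem:normality} is precisely the statement that $\bm{n}$ is not normal for $(x-z_0)^2\bm{\mu}$. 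Conversely, a nonzero witness $S$ ($\deg S<\abs{\bm{n}}$) of that non-normality yields $T:=(x-z_0)^2S\in V\setminus\{0\}=\spann\{P,Q\}\setminus\{0\}$, vanishing to order $\ge2$ at $z_0$; writing $T=\alpha P+\beta Q$ with $(\alpha,\beta)\neq(0,0)$ and reading off $T(z_0)=T'(z_0)=0$ shows the two columns above are dependent, i.e.\ $W(P,Q;z_0)=0$. This proves the displayed equality, and with it the theorem.

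I do not expect a genuine obstacle: the argument is purely formal once one sees that $\spann\{P_{\bm{n}},P_{\bm{n}+\bm{e}_j}\}$ equals the full degree-$(\abs{\bm{n}}+1)$ solution space $V$ of the plain $\bm{n}$-orthogonality relations --- this is the single place where \emph{both} normality hypotheses are used (normality of $\bm{n}$ for the dimension count and for Remark~\ref{rem:normality}, and normality of $\bm{n}+\bm{e}_j$ to know that $P\in V$ has degree $\abs{\bm{n}}+1\neq\deg Q$). The only point needing care is keeping the degree bookkeeping honest: after factoring out $(x-z_0)^2$ one is left with a polynomial of degree $\le\abs{\bm{n}}-1$, exactly what Remark~\ref{rem:normality} requires in order to detect non-normality of the index $\bm{n}$ for the Christoffel-transformed system.
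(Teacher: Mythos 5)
Your proof is correct and follows essentially the same route as the paper's: both identify $W(P_{\bm{n}+\bm{e}_j},P_{\bm{n}};z_0)=0$ with the existence of a nonzero combination $aP_{\bm{n}+\bm{e}_j}+bP_{\bm{n}}$ divisible by $(x-z_0)^2$, whose quotient is exactly a witness (via Remark~\ref{rem:normality}) for non-normality of $\bm{n}$ with respect to $(x-z_0)^2\bm{\mu}$, and both then invoke Lemma~\ref{lem:wronskian interlacing} for the interlacing statement. Your explicit identification of the two-dimensional space $V=\spann\{P_{\bm{n}},P_{\bm{n}+\bm{e}_j}\}$ is just a tidier packaging of the paper's ``pick $a$ so that $(x-z_0)^2Q-aP_{\bm{n}+\bm{e}_j}$ has degree $\le\abs{\bm{n}}$ and compare orthogonality relations'' step, not a different argument.
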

\begin{rem}\label{rem:type II interlacing}
    Similarly, if $\bm{n}+\bm{e}_j$, $\bm{n}+\bm{e}_k$ and $\bm{n}+\bm{e}_j+\bm{e}_k$ are normal for $\bm{\mu}$, $j \neq k$, then
    \begin{equation}
        \qquad \calZ[W(P_{\bm{n}+\bm{e}_j},P_{\bm{n}+\bm{e}_k})] = \left\{
        z_0\in\bbC: \bm{n} \mbox{ is not normal for } (x-z_0)^2\bm\mu \right\}.
    \end{equation}
    In particular, if $P_{\bm{n}+\bm{e}_j}$ 
    {is} real-rooted, then $P_{\bm{n}+\bm{e}_j} \sim P_{\bm{n}+\bm{e}_k}$ if and only if $\bm{n}$ is normal with respect to $(x-z_0)^2\bm\mu$ for every $z_0 \in \bbR$. 
\end{rem}
\begin{proof}
    First note that $W(P_{\bm{n}+\bm{e}_j},P_{\bm{n}};z_0) = 0$ if and only if the system of equations
    \begin{equation}
        \begin{cases}
            a P_{\bm{n}+\bm{e}_j}(z_0) + bP_{\bm{n}}(z_0) = 0 \\
            aP_{\bm{n}+\bm{e}_j}'(z_0) + bP_{\bm{n}}'(z_0) = 0
        \end{cases}
    \end{equation}
holds for some $(a,b) \neq (0,0)$. This is equivalent to $aP_{\bm{n}+\bm{e}_j}(x) + bP_{\bm{n}}(x) = (x-z_0)^2Q(x)$ for some polynomial $Q \neq 0$. Such a $Q$ would satisfy 
\begin{equation}\label{eq:orthogonality double christoffel}
    \int Q(x)x^p(x-z_0)^2d\mu_j(x) = 0, \qquad p = 0,\dots,n_j-1,
\end{equation}
but $\deg{Q} < \abs{\bm{n}}$, which contradicts to the normality of $\bm{n}$ for $(x-z_0)^2\bm{\mu}$. 

Conversely, if $\bm{n}$ is not normal for $(x-z_0)^2\bm{\mu}$, then there is some $Q \neq 0$ with $\deg{Q} < \abs{\bm{n}}$ satisfying \eqref{eq:orthogonality double christoffel}. Pick $a$ such that $(x-z_0)^2Q(x)-a P_{\bm{n}+\bm{e}_j}$ has at most degree $\abs{\bm{n}}$. By comparing orthogonality relations we find that $(x-z_0)^2Q(x)-a P_{\bm{n}+\bm{e}_j} = bP_{\bm{n}}$, assuming $\bm{n}$ is normal with respect to $\bm{\mu}$. Since $Q \neq 0$ we must have $(a,b) \neq (0,0)$, so $W(P_{\bm{n}+\bm{e}_j},P_{\bm{n}};z_0) = 0$. This concludes the proof, if we note that the interlacing follows from Lemma \ref{lem:wronskian interlacing}.

For Remark \ref{rem:type II interlacing}, the same proof works if we note that $P_{\bm{n}+\bm{e}_j}$ and $P_{\bm{n}+\bm{e}_k}$ are linearly independent when $\bm{n}+\bm{e}_j+\bm{e}_k$ is normal (the converse also holds). To see this, assume $P_{\bm{n}+\bm{e}_j}$ is a non-zero multiple of $P_{\bm{n}+\bm{e}_k}$ or vice versa. Then $P_{\bm{n}+\bm{e}_j}$ and $P_{\bm{n}+\bm{e}_k}$ satisfy all the orthogonality relations for the index $\bm{n}+\bm{e}_j+\bm{e}_k$, but $\deg{P_{\bm{n}+\bm{e}_j}} < \abs{\bm{n}+\bm{e}_j+\bm{e}_k}$, so $\bm{n}+\bm{e}_j+\bm{e}_k$ cannot be normal. 
Note that the linear independence of $P_{\bm{n}}$ and $P_{\bm{n}+\bm{e}_j}$ was immediate since they have different degrees.

\end{proof}

\begin{cor}
    Let $\bm{\mu}$ be an Angelesco system. 
    Then $P_{\bm{n}+\bm{e}_j} \sim P_{\bm{n}}$ and $P_{\bm{n}+\bm{e}_j} \sim P_{\bm{n}+\bm{e}_k}$ for any $\bm{n}$ and $j$, and any $k \neq j$. 
\end{cor}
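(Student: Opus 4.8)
The plan is to verify the hypothesis of Theorem~\ref{thm:interlacing type II} (together with Remark~\ref{rem:type II interlacing}) for Angelesco systems, namely that every relevant index is normal for the doubly Christoffel-transformed system $(x-z_0)^2\bm{\mu}$ for every $z_0\in\bbR$, and that the polynomials involved are real-rooted. First I would recall from Corollary~\ref{cor:angelesco zero location} that $P_{\bm{n}}$, $P_{\bm{n}+\bm{e}_j}$, and $P_{\bm{n}+\bm{e}_k}$ are all real-rooted (indeed with $n_i$ simple zeros in each $\mathring{\Gamma}_i$), so the real-rootedness hypotheses in Theorem~\ref{thm:interlacing type II} and Remark~\ref{rem:type II interlacing} are automatic.

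The key observation is that the Angelesco property is preserved under Christoffel transforms: if $z_0\in\bbR$, then replacing each $\mu_i$ by $(x-z_0)^2\mu_i$ produces a new system whose supports are still contained in the same intervals $\Gamma_i$ (multiplying by $(x-z_0)^2\ge 0$ does not enlarge the support and does not change sign-definiteness). Hence $(x-z_0)^2\bm{\mu}$ is again an Angelesco system, and by Theorem~\ref{thm:Angelesco} it is perfect; in particular $\bm{n}$ is normal for $(x-z_0)^2\bm{\mu}$ for every $z_0\in\bbR$. Likewise all the indices $\bm{n}$, $\bm{n}+\bm{e}_j$, $\bm{n}+\bm{e}_k$, $\bm{n}+\bm{e}_j+\bm{e}_k$ are normal for $\bm{\mu}$ itself (again by Theorem~\ref{thm:Angelesco}), which is the standing assumption in Theorem~\ref{thm:interlacing type II} and Remark~\ref{rem:type II interlacing}.

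With these two ingredients in hand, the conclusion $P_{\bm{n}+\bm{e}_j}\sim P_{\bm{n}}$ is immediate from the ``in particular'' clause of Theorem~\ref{thm:interlacing type II}, and $P_{\bm{n}+\bm{e}_j}\sim P_{\bm{n}+\bm{e}_k}$ for $k\neq j$ follows from the corresponding clause of Remark~\ref{rem:type II interlacing}. I do not anticipate a genuine obstacle here: the only point requiring a line of justification is the stability of the Angelesco class under multiplication by $(x-z_0)^2$ for real $z_0$, and that is a one-sentence check on supports and sign. So the proof is essentially a citation of Theorem~\ref{thm:Angelesco}, Theorem~\ref{thm:interlacing type II}, Remark~\ref{rem:type II interlacing}, and Corollary~\ref{cor:angelesco zero location}, assembled in that order.
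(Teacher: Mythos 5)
Your proposal is correct and follows essentially the same route as the paper: the paper's proof is the one-line observation that $(x-z_0)^2\bm{\mu}$ is again Angelesco for every real $z_0$, so Theorem~\ref{thm:interlacing type II} and Remark~\ref{rem:type II interlacing} apply. Your additional explicit appeals to Theorem~\ref{thm:Angelesco} for normality and to Corollary~\ref{cor:angelesco zero location} for real-rootedness merely spell out details the paper leaves implicit.
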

\begin{proof}
     $(x-z_0)^2\bm{\mu}$ is Angelesco for any $z_0$, so Theorem~\ref{thm:interlacing type II} applies, along with Remark \ref{rem:type II interlacing}.
\end{proof}

\begin{cor}\label{cor:ATtypeInterlacingII}
    Let $\bm{\mu}$ be an AT system on $\Gamma$ for $\bm{n}$ and $\bm{n}+\bm{e}_j$. 
    Then zeros of $P_{\bm{n}}$ and $P_{\bm{n}+\bm{e}_j}$ interlace. If $\bm{\mu}$ is AT on $\Gamma$ for $\bm{n}+\bm{e}_j$, $\bm{n}+\bm{e}_k$ and $\bm{n}+\bm{e}_j+\bm{e}_k$, then $P_{\bm{n}+\bm{e}_j} \sim P_{\bm{n}+\bm{e}_k}$.
\end{cor}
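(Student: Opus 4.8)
The plan is to deduce both assertions from Theorem~\ref{thm:interlacing type II} and Remark~\ref{rem:type II interlacing} respectively, so the whole task reduces to checking their hypotheses in the AT setting. For the first assertion one needs: $\bm n$ and $\bm n+\bm e_j$ normal for $\bm\mu$; $P_{\bm n}$ real-rooted; and $\bm n$ normal for $(x-z_0)^2\bm\mu$ for every $z_0\in\bbR$. For the second assertion one needs: $\bm n+\bm e_j$, $\bm n+\bm e_k$, $\bm n+\bm e_j+\bm e_k$ normal for $\bm\mu$; $P_{\bm n+\bm e_j}$ real-rooted; and, once more, $\bm n$ normal for $(x-z_0)^2\bm\mu$ for every real $z_0$.

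First I would dispose of the normality requirements: every index listed above is one for which $\bm\mu$ is assumed AT on $\Gamma$ (note that the standing hypothesis of the corollary already includes AT for $\bm n$, and it stays in force in the second sentence), so Theorem~\ref{thm:ATperfect} supplies normality for $\bm\mu$ immediately. For real-rootedness I would quote Corollary~\ref{cor:ATtypeII}: by normality $P_{\bm n}$ is monic of exact degree $|\bm n|$ and $P_{\bm n+\bm e_j}$ of exact degree $|\bm n|+1$, while Corollary~\ref{cor:ATtypeII} places all of their zeros in $\mathring{\Gamma}\subset\bbR$ (and makes them simple), accounting for every zero, so the polynomials are real-rooted.

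The substantive point is the remaining condition --- normality of the base index $\bm n$ for $(x-z_0)^2\bm\mu$ with $z_0$ real --- and here the key observation is that the double Christoffel transform does not disturb the AT structure. Since $(x-z_0)^2\ge 0$ on $\bbR$, the measure $\widetilde\mu:=(x-z_0)^2\mu$ again lies in $\calM(\Gamma)$ whenever $\mu\in\calM(\Gamma)$ (it is sign-definite, has all moments finite, and retains an infinite support), and from $d\bigl((x-z_0)^2\mu_i\bigr)=w_i\,d\widetilde\mu$ one sees that the weights of the system $(x-z_0)^2\bm\mu$ relative to $\widetilde\mu$ are the very same $w_1,\dots,w_r$ as for $\bm\mu$. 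Hence the Chebyshev condition of Definition~\ref{def:AT} for the index $\bm n$ reads identically for $(x-z_0)^2\bm\mu$ and for $\bm\mu$, so $(x-z_0)^2\bm\mu$ is AT on $\Gamma$ for $\bm n$, and Theorem~\ref{thm:ATperfect} gives its normality. Feeding this into Theorem~\ref{thm:interlacing type II} yields $P_{\bm n+\bm e_j}\sim P_{\bm n}$, and into Remark~\ref{rem:type II interlacing} yields $P_{\bm n+\bm e_j}\sim P_{\bm n+\bm e_k}$.

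I do not anticipate a real obstacle: the only idea involved is that multiplying the reference measure of an AT system by a nonnegative polynomial is invisible to the Chebyshev condition. The two things that deserve a moment's attention are to keep the standing AT hypothesis on $\bm n$ active when proving the second assertion (it is this, and not AT for the three shifted indices, that provides normality of $\bm n$ for the doubly transformed system), and to recall --- as already noted in the proof of Remark~\ref{rem:type II interlacing} --- that normality of $\bm n+\bm e_j+\bm e_k$ for $\bm\mu$ is what guarantees $P_{\bm n+\bm e_j}$ and $P_{\bm n+\bm e_k}$ are linearly independent, while the degree condition of Lemma~\ref{lem:wronskian interlacing} holds trivially since these two polynomials have equal degree.
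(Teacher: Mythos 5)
Your proposal is correct and follows the same route as the paper: the paper's proof is precisely the one-line observation that replacing the reference measure $\mu$ by $(x-z_0)^2\mu$ shows $(x-z_0)^2\bm{\mu}$ is again AT on $\Gamma$ for the relevant index, after which Theorem~\ref{thm:interlacing type II} and Remark~\ref{rem:type II interlacing} apply. Your write-up merely makes explicit the routine verifications (normality via Theorem~\ref{thm:ATperfect}, real-rootedness via Corollary~\ref{cor:ATtypeII}, and the fact that the standing AT hypothesis on $\bm{n}$ is what feeds the second assertion) that the paper leaves implicit.
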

\begin{proof}
     If $\bm{\mu}$ is AT for $\bm{k}$ then $(x-z_0)^2\bm{\mu}$ is 
     also AT for $\bm{k}$ (just replace the reference measure $\mu$, see Definition~\ref{def:AT}, with $(x-z_0)^2\mu$), so Theorem~\ref{thm:interlacing type II} applies.     
\end{proof}

\subsection{Zero interlacing for Type I multiple orthogonal polynomials}\label{ss:interlacingI}
\hfill\\

\begin{thm}\label{thm:interlacing type I}
    Assume that $\bm{n}$, $\bm{n}-\bm{e}_1$, and $\bm{n}-\bm{e}_
    \ell$ are normal for  $\bm{\mu}=(\mu_1,\mu_2,\dots,\mu_r)$. 
    Then
    \begin{multline}\label{eq:WronskiNew}
        \calZ[W(A_{\bm{n}}^{(1)},A_{\bm{n}-\bm{e}_\ell}^{(1)})] 
        \\
        =
        \left\{
        z_0 \in \bbC: \bm{n} - 2\bm{e}_1 \mbox{ is not normal for } ((x-z_0)^2\mu_1,\mu_2,\dots,\mu_r)
        \right\}.
    \end{multline}

    If $A_{\bm{n}}^{(1)}$ is real-rooted, then $A_{\bm{n}}^{(1)} \sim A_{\bm{n}-\bm{e}_\ell}^{(1)}$ if and only if $\bm{n}-2\bm{e}_1$ is normal for the system 
    $((x-z_0)^2\mu_1,\mu_2,\dots,\mu_r)$
    for every $z_0 \in \bbR$. 
    
    Analogous statements hold for $A_{\bm{n}}^{(j)}$ for $j = 2,\dots,r$, if we instead transform the $j$-th measure.
\end{thm}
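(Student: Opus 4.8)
The plan is to run the argument of Theorem~\ref{thm:interlacing type II} with the first components $A^{(1)}_{\bm{n}}$ and $A^{(1)}_{\bm{n}-\bm{e}_\ell}$ of the type I vectors in place of the type II polynomials, carrying the extra bookkeeping produced by the non-monic normalisation. Write $\widehat{\widehat{\bm{\mu}}} := ((x-z_0)^2\mu_1,\mu_2,\dots,\mu_r)$. First I would record a degree fact: since $\bm{n}$ and $\bm{n}-\bm{e}_1$ are both normal, $\deg A^{(1)}_{\bm{n}} = n_1-1$ exactly --- were the degree at most $n_1-2$, then $\bm{A}_{\bm{n}}$ (non-zero, since its moment of order $\abs{\bm{n}}-1$ equals $1$) would, by Remark~\ref{rem:normality}, witness non-normality of $\bm{n}-\bm{e}_1$. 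In particular $\deg A^{(1)}_{\bm{n}-\bm{e}_\ell}\le n_1-1=\deg A^{(1)}_{\bm{n}}$, which is exactly the degree hypothesis $\deg P\le\deg Q+1$ that Lemma~\ref{lem:wronskian interlacing} will require at the end.

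As in the proof of Theorem~\ref{thm:interlacing type II}, $W(A^{(1)}_{\bm{n}},A^{(1)}_{\bm{n}-\bm{e}_\ell};z_0)=0$ if and only if there is $(a,b)\ne(0,0)$ with $a A^{(1)}_{\bm{n}}+b A^{(1)}_{\bm{n}-\bm{e}_\ell}$ divisible by $(x-z_0)^2$. Put $\bm{C} := a\bm{A}_{\bm{n}}+b\bm{A}_{\bm{n}-\bm{e}_\ell}$, so $\deg C^{(j)}\le n_j-1$ for all $j$, while $\sum_{j}\int C^{(j)}(x)x^k\,d\mu_j(x)=0$ for $k\le\abs{\bm{n}}-3$ (both $\bm{A}_{\bm{n}}$ and $\bm{A}_{\bm{n}-\bm{e}_\ell}$ are orthogonal through that order), and the moment of $\bm{C}$ of order $\abs{\bm{n}}-2$ equals $b$. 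For the inclusion $\subseteq$, set $\bm{B} := (C^{(1)}/(x-z_0)^2,C^{(2)},\dots,C^{(r)})$; then $\deg B^{(1)}\le n_1-3$, $\deg B^{(j)}\le n_j-1$, and $\sum_{j}\int B^{(j)}(x)x^k\,d\widehat{\widehat{\mu}}_j(x)=\sum_j\int C^{(j)}(x)x^k\,d\mu_j(x)=0$ for $k\le\abs{\bm{n}}-3=\abs{\bm{n}-2\bm{e}_1}-1$, so $\bm{B}$ witnesses non-normality of $\bm{n}-2\bm{e}_1$ for $\widehat{\widehat{\bm{\mu}}}$, provided $\bm{B}\ne\bm{0}$; and $\bm{C}=\bm{0}$ would force $b=0$ (the order $\abs{\bm{n}}-2$ moment) and then $a=0$ (the order $\abs{\bm{n}}-1$ moment), a contradiction.

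For the reverse inclusion, let $\bm{B}\ne\bm{0}$ witness non-normality of $\bm{n}-2\bm{e}_1$ for $\widehat{\widehat{\bm{\mu}}}$ and set $\bm{C} := ((x-z_0)^2 B^{(1)},B^{(2)},\dots,B^{(r)})$; then $\deg C^{(j)}\le n_j-1$ and $\sum_j\int C^{(j)}(x)x^k\,d\mu_j(x)=0$ for $k\le\abs{\bm{n}}-3$. Subtract the appropriate multiple of $\bm{A}_{\bm{n}-\bm{e}_\ell}$ to annihilate the order $\abs{\bm{n}}-2$ moment, then the appropriate multiple of $\bm{A}_{\bm{n}}$ to annihilate the order $\abs{\bm{n}}-1$ moment; the result has admissible degrees and vanishing moments through order $\abs{\bm{n}}-1$, hence is $\bm{0}$ by normality of $\bm{n}$ (Remark~\ref{rem:normality}). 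Thus $\bm{C}=\alpha\bm{A}_{\bm{n}}+\beta\bm{A}_{\bm{n}-\bm{e}_\ell}$ with $(\alpha,\beta)\ne(0,0)$ (otherwise $\bm{C}=\bm{0}$, forcing $B^{(1)}=0$ and $B^{(j)}=C^{(j)}=0$ for $j\ge2$, i.e.\ $\bm{B}=\bm{0}$), and reading off the first components gives $\alpha A^{(1)}_{\bm{n}}+\beta A^{(1)}_{\bm{n}-\bm{e}_\ell}=(x-z_0)^2 B^{(1)}$, so $W(A^{(1)}_{\bm{n}},A^{(1)}_{\bm{n}-\bm{e}_\ell};z_0)=0$.

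Finally, if $A^{(1)}_{\bm{n}}$ is real-rooted, I would apply Lemma~\ref{lem:wronskian interlacing} with $P=A^{(1)}_{\bm{n}-\bm{e}_\ell}$ and $Q=A^{(1)}_{\bm{n}}$ (real-rooted, of degree $n_1-1\ge\deg P$, so $\deg P\le\deg Q+1$); since $W(P,Q)$ and $W(Q,P)$ have the same zero set, $A^{(1)}_{\bm{n}}\sim A^{(1)}_{\bm{n}-\bm{e}_\ell}$ holds if and only if $W(A^{(1)}_{\bm{n}},A^{(1)}_{\bm{n}-\bm{e}_\ell};z_0)\ne0$ for all $z_0\in\bbR$, i.e.\ if and only if $\bm{n}-2\bm{e}_1$ is normal for $((x-z_0)^2\mu_1,\mu_2,\dots,\mu_r)$ for every $z_0\in\bbR$; the statement for a general $j$ follows by relabelling and transforming $\mu_j$ in place of $\mu_1$. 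I expect no serious analytic obstacle --- everything reduces to linear algebra over the moment functionals, exactly as in Theorem~\ref{thm:interlacing type II} --- and the only points needing genuine care are the two non-vanishing checks ($\bm{C}\ne\bm{0}$ and $(\alpha,\beta)\ne(0,0)$) forced by the non-monic normalisation, and the use of the normality of $\bm{n}-\bm{e}_1$ to secure $\deg A^{(1)}_{\bm{n}}=n_1-1$ so that Lemma~\ref{lem:wronskian interlacing} is applicable.
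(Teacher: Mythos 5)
Your proposal is correct and follows essentially the same route as the paper: the same Wronskian-vanishing criterion via a double zero of $aA^{(1)}_{\bm{n}}+bA^{(1)}_{\bm{n}-\bm{e}_\ell}$, the same passage back and forth between that combination and a witness of non-normality of $\bm{n}-2\bm{e}_1$ for $((x-z_0)^2\mu_1,\mu_2,\dots,\mu_r)$ via Remark~\ref{rem:normality}, and the same appeal to Lemma~\ref{lem:wronskian interlacing}. The only cosmetic difference is that you derive $\deg A^{(1)}_{\bm{n}}=n_1-1$ directly from the normality of $\bm{n}-\bm{e}_1$ (the paper cites Ismail for this) and you spell out the two-step moment annihilation and the non-degeneracy checks slightly more explicitly; these are welcome details, not a different argument.
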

\begin{rem}
    Assuming additionally the $\bm{\mu}$-normality of 
    $\bm{n}-\bm{e}_k$ and $\bm{n}-\bm{e}_k-\bm{e}_l$, then 
    $A_{\bm{n}-\bm{e}_\ell}^{(1)}\sim A_{\bm{n}-\bm{e}_k}^{(1)}$ if and only if $\bm{n}-2\bm{e}_1$ is normal for 
    $((x-z_0)^2\mu_1,\mu_2,\dots,\mu_r)$
    for every $z_0 \in \bbR$. 
\end{rem}
\begin{proof}
     Fix any $z_0\in\bbC$ and denote $\widehat{\widehat{\bm{\mu}}}$ to be $((x-z_0)^2\mu_1,\mu_2,\dots,\mu_r)$.     
     Note that $W(A_{\bm{n}}^{(1)},A_{\bm{n}-\bm{e}_\ell}^{(1)};z_0) = 0$ if and only if $aA_{\bm{n}}^{(1)}(x)+bA_{\bm{n}-\bm{e}_\ell}^{(1)}(x) = (x-z_0)^2B(x)$ for some $(a,b) \neq (0,0)$ and some polynomial $B$ with $\deg{B} \leq n_1-3$. Note that $\bm{A}_{\bm{n}}$ and $\bm{A}_{\bm{n}-\bm{e}_\ell}$ are linearly independent, since
    \begin{equation}\label{eq:indep}
        \sum_{j = 1}^r \int A_{\bm{n}}^{(j)}(x)x^{\abs{\bm{n}}-2}d\mu_j(x) = 0 \neq \sum_{j = 1}^r \int A_{\bm{n}-\bm{e}_\ell}^{(j)}(x)x^{\abs{\bm{n}}-2}d\mu_j(x).
    \end{equation}
    Then the vector $(B,aA_{\bm{n}}^{(2)}+bA_{\bm{n}-\bm{e}_\ell}^{(2)},\dots,aA_{\bm{n}}^{(r)}+bA_{\bm{n}-\bm{e}_\ell}^{(r)})$ is non-zero and satisfies all the degree and orthogonality conditions for the index $\bm{n}-2\bm{e}_1$ with respect to $\widehat{\widehat{\bm{\mu}}}$. Since we also have
    \begin{equation}
        \sum_{j = 1}^r \int (aA_{\bm{n}}^{(j)}(x)+bA_{\bm{n}-\bm{e}_\ell}^{(j)}(x))x^{\abs{\bm{n}}-3}d\mu_j(x) = 0,
    \end{equation}
    we see that $\bm{n}-2\bm{e}_1$ is not normal for $\widehat{\widehat{\bm{\mu}}}$ by Remark~\ref{rem:normality}. 

    Conversely, if $\bm{n}-2\bm{e}_1$ is not normal for $\widehat{\widehat{\bm{\mu}}}$, then there is some vector $\bm{B} = (B^{(1)},B^{(2)},\dots,B^{(r)}) \neq \bm{0}$ with $\deg{B^{(1)}} \leq n_1-3$ and $\deg{B^{(j)}} \leq n_j-1$ for $j = 2,\dots,r$, such that 
    \begin{equation}
        \sum_{j = 1}^r \int B^{(j)}(x)x^p d\widehat{\widehat{{\mu}}}_j(x) = 0, \qquad p = 0,\dots,\abs{\bm{n}}-3.
    \end{equation}
    Pick $a$ such that $((x-z_0)^2B^{(1)}(x),B^{(2)}(x)\dots,B^{(r)}(x)) - a\bm{A}_{\bm{n}-\bm{e}_\ell}(x)$ satisfies the same degree and orthogonality conditions as $\bm{A}_{\bm{n}}$ (with the correct choice of $a$ we get the one missing orthogonality condition). This shows that $aA_{\bm{n}}^{(1)}(x)+bA_{\bm{n}-\bm{e}_\ell}^{(1)}(x) = (x-z_0)^2B^{(1)}(x)$ for some $(a,b) \neq (0,0)$ and completes the proof of \eqref{eq:WronskiNew}. 

    The statement on interlacing then follows from Lemma \ref{lem:wronskian interlacing} since normality of $\bm{n}-\bm{e}_1$  gives us $\deg A^{(1)}_{\bm{n}} = n_1-1$ (use~\cite[Cor 23.1.1]{Ismail}), and $\deg A^{(1)}_{\bm{n}-\bm{e}_\ell}\le n_1-1\le \deg A^{(1)}_{\bm{n}}+1$.
\end{proof}

\begin{cor}
    Let $\bm{\mu}$ be an Angelesco system.
    Then $A^{(i)}_{\bm{n}} \sim A^{(i)}_{\bm{n}+\bm{e}_j}$ and $A^{(i)}_{\bm{n}+\bm{e}_j} \sim A^{(i)}_{\bm{n}+\bm{e}_k}$, assuming $n_i \geq 1$ and $j \neq k$.
\end{cor}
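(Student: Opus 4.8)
The plan is to combine Theorem~\ref{thm:interlacing type I} (the Wronskian criterion for interlacing of consecutive type I polynomials) with the perfectness of Angelesco systems (Theorem~\ref{thm:Angelesco}) and the real-rootedness established in the corollary immediately preceding this one. Concretely, for the interlacing $A^{(i)}_{\bm{n}} \sim A^{(i)}_{\bm{n}+\bm{e}_j}$ with $j\ne i$, I would apply Theorem~\ref{thm:interlacing type I} with the index $\bm{n}+\bm{e}_j$ in place of $\bm{n}$ there and with $\ell=j$ (so that $(\bm{n}+\bm{e}_j)-\bm{e}_\ell = \bm{n}$), but transforming the $i$-th measure rather than the first, as the theorem's last sentence permits. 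The relevant normality condition becomes: $(\bm{n}+\bm{e}_j)-2\bm{e}_i$ is normal for $((x-z_0)^2\mu_i$ in the $i$-th slot, all other $\mu_k$ unchanged$)$ for every $z_0\in\bbR$. Since $\bm{\mu}$ is Angelesco, replacing $\mu_i$ by $(x-z_0)^2\mu_i$ keeps the support inside $\Gamma_i$, hence the new system is still Angelesco and therefore perfect; so this normality holds for all $z_0$ automatically. The auxiliary hypotheses of Theorem~\ref{thm:interlacing type I} — normality of $\bm{n}+\bm{e}_j$, of $(\bm{n}+\bm{e}_j)-\bm{e}_i$, and of $(\bm{n}+\bm{e}_j)-\bm{e}_j=\bm{n}$ — are likewise free from Angelesco perfectness, using $n_i\ge 1$ to ensure the shifted indices lie in $\bbN^r$. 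Real-rootedness of $A^{(i)}_{\bm{n}+\bm{e}_j}$ is exactly (the $i$-th coordinate version of) the previous corollary on zero location for Angelesco type I polynomials. Putting these together, Theorem~\ref{thm:interlacing type I} yields $A^{(i)}_{\bm{n}} \sim A^{(i)}_{\bm{n}+\bm{e}_j}$.

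For the second interlacing $A^{(i)}_{\bm{n}+\bm{e}_j} \sim A^{(i)}_{\bm{n}+\bm{e}_k}$ with $j\ne k$ (and both possibly different from $i$), I would invoke the Remark following Theorem~\ref{thm:interlacing type I}, which handles exactly the pair $A^{(1)}_{\bm{n}-\bm{e}_\ell} \sim A^{(1)}_{\bm{n}-\bm{e}_k}$ under normality of $\bm{n}$, $\bm{n}-\bm{e}_1$, $\bm{n}-\bm{e}_\ell$, $\bm{n}-\bm{e}_k$, $\bm{n}-\bm{e}_\ell-\bm{e}_k$, and of $\bm{n}-2\bm{e}_1$ for the doubly-Christoffel-transformed system. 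Again I apply it with the $i$-th measure transformed and with the base index taken to be $\bm{n}+\bm{e}_j+\bm{e}_k$ (so the two neighbours are $\bm{n}+\bm{e}_j$ and $\bm{n}+\bm{e}_k$). Every normality requirement is supplied by Angelesco perfectness — including that of the transformed system, which stays Angelesco — and the needed real-rootedness of $A^{(i)}_{\bm{n}+\bm{e}_j}$ is again the preceding corollary. One only needs $n_i\ge 1$ so that the various indices obtained by subtracting $2\bm{e}_i$ or $\bm{e}_i$ remain valid multi-indices.

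I do not expect a genuine obstacle here: the corollary is a direct specialization of the already-proven general criterion to a class of systems whose Christoffel transforms are visibly still in the class. The only points requiring a little care are bookkeeping ones — checking that all index shifts stay in $\bbN^r$ (this is where $n_i\ge 1$ is used, and for the second statement one also tacitly uses that $\bm{n}$ has all coordinates $\ge 0$ so $\bm{n}+\bm{e}_j+\bm{e}_k$ minus the relevant unit vectors is admissible), and correctly bookkeeping which coordinate plays the role of ``$1$'' and which plays the role of ``$\ell$'' when transcribing Theorem~\ref{thm:interlacing type I} with the $i$-th measure transformed instead of the first. Accordingly the proof is essentially two sentences: ``Transforming the $i$-th measure, $(x-z_0)^2\mu_i$ keeps the system Angelesco, hence perfect, for every $z_0\in\bbR$; combined with real-rootedness of the Angelesco type I polynomials (previous corollary), Theorem~\ref{thm:interlacing type I} and the Remark following it give both interlacings.''
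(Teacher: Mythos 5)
Your proposal is correct and is essentially the paper's own argument, which consists of the single observation that $((x-z_0)^2\mu_1,\ldots,\mu_r)$ (or with the $i$-th measure transformed) is still Angelesco, hence perfect, so Theorem~\ref{thm:interlacing type I} and its accompanying Remark apply; the real-rootedness input from the preceding zero-location corollary that you cite explicitly is left implicit in the paper. Your extra index bookkeeping is harmless and, if anything, more careful than the one-line proof in the text.
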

\begin{proof}
      $((x-z_0)^2\mu_1,\mu_2,\ldots,\mu_r)$  is Angelesco for any $z_0$, so Theorem~\ref{thm:interlacing type I} applies.
\end{proof}

For AT systems real-rootedness of type I polynomials does not have to hold, so one cannot expect interlacing, of course. For Nikishin systems, Theorem~\ref{thm:interlacing type I} can be used, however.

\begin{thm}\label{thm:NikishintypeInterlacingI}
    Let $(\mu_1,\mu_2)=\calN(\sigma_1,\sigma_2)$ be a Nikishin system. 
    Then
    \begin{enumerate}[(i)]
        \item 
        $A^{(1)}_{\bm{n}}$, $A^{(1)}_{\bm{n}-\bm{e}_1}$, and $A^{(1)}_{\bm{n}-\bm{e}_2}$ are pairwise  interlacing if $n_1 +1\leq n_2$. 
        \item $A^{(2)}_{\bm{n}}$, $A^{(2)}_{\bm{n}-\bm{e}_1}$, and $A^{(2)}_{\bm{n}-\bm{e}_2}$ are pairwise  interlacing if  $n_1+1\ge n_2$.
    \end{enumerate}
\end{thm}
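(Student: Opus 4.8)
The plan is to deduce Theorem~\ref{thm:NikishintypeInterlacingI} from Theorem~\ref{thm:interlacing type I} together with the reduction machinery developed in the proof of Theorem~\ref{thm:NikishintypeI} (and its Lemma~\ref{lem:perturbation}). By Theorem~\ref{thm:interlacing type I} and its Remark, to get pairwise interlacing of $A^{(1)}_{\bm n}$, $A^{(1)}_{\bm n-\bm e_1}$, $A^{(1)}_{\bm n-\bm e_2}$ it suffices to check that the index $\bm n-2\bm e_1$ is normal for $((x-z_0)^2\mu_1,\mu_2)$ for every $z_0\in\bbR$ (plus the relevant $\bm\mu$-normality of $\bm n$, $\bm n-\bm e_1$, $\bm n-\bm e_2$, $\bm n-\bm e_1-\bm e_2$, which holds because Nikishin systems are perfect), and also real-rootedness of $A^{(1)}_{\bm n}$, which is exactly what Theorem~\ref{thm:NikishintypeI}(i) gives under $n_1+1\le n_2$. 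Symmetrically for part (ii), where one transforms the \emph{second} measure and needs $\bm n-2\bm e_2$ normal for $(\mu_1,(x-z_0)^2\mu_2)$, with real-rootedness of $A^{(2)}_{\bm n}$ supplied by Theorem~\ref{thm:NikishintypeI}(ii) under $n_1+1\ge n_2$.

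So the heart of the argument is the normality of these double-Christoffel-transformed systems, and here I would recycle verbatim the computation already performed in the proof of Theorem~\ref{thm:NikishintypeI}. For part (ii): for $z_0\in\bbR\setminus\mathring\Gamma_2$ we have $(x-z_0)^2\mu_2=(x-z_0)^2 m_{\sigma_2}(x)\mu_1$, and applying~\eqref{eq:mChristoffel} twice gives $(x-z_0)^2 m_{\sigma_2}(x)=m_{\widehat{\widehat\sigma}_2}(x)+(cx+d)$ with $c,d\in\bbR$ (both real since $z_0\in\bbR$), so $(\mu_1,(x-z_0)^2\mu_2)=(\mu_1,\, m_{\widehat{\widehat\sigma}_2}(x)\mu_1+(cx+d)\mu_1)$; since $\supp\widehat{\widehat\sigma}_2\subseteq\Gamma_2$, the pair $(\mu_1,m_{\widehat{\widehat\sigma}_2}(x)\mu_1)$ is Nikishin hence perfect, and Lemma~\ref{lem:perturbation} with $s=1$ transfers normality to $(\mu_1,(x-z_0)^2\mu_2)$ for every index $(m_1,m_2)$ with $m_2\le m_1-1$. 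Taking $(m_1,m_2)=\bm n-2\bm e_2=(n_1,n_2-2)$, the condition $n_2-2\le n_1-1$, i.e. $n_1+1\ge n_2$, is precisely the hypothesis of part (ii). For $z_0\in\bbR\cap\mathring\Gamma_2$ one instead needs $\calZ[A^{(2)}_{\bm n}]\subset\mathring\Gamma_2$ from Theorem~\ref{thm:NikishintypeI}(ii) to note that $W(A^{(2)}_{\bm n},A^{(2)}_{\bm n-\bm e_\ell};z_0)\ne0$ can only fail off $\mathring\Gamma_2$; more cleanly, since Theorem~\ref{thm:interlacing type I} characterizes $\calZ[W]$ as the set of $z_0\in\bbC$ where the transformed index is non-normal, and we have just shown normality for all \emph{real} $z_0$ outside $\mathring\Gamma_2$, combined with real-rootedness it suffices, so I would just check $z_0\in\bbR\setminus\mathring\Gamma_2$. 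Part (i) is handled by the same flip to $\widetilde{\bm\mu}=(\mu_2,\mu_1)$ used in the proof of Theorem~\ref{thm:NikishintypeI}(i): $A^{(1)}_{n_1,n_2}=\widetilde A^{(2)}_{n_2,n_1}$, the stripping relation~\eqref{eq:strippingM} plus Lemma~\ref{lem:perturbation} turn the relevant system into a Nikishin one when $n_1\le n_2-1$, and part (ii) applied there (with roles swapped) yields the claim.

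One bookkeeping point I would be careful about: Theorem~\ref{thm:interlacing type I} and its Remark require, besides the transformed-index normality, the $\bm\mu$-normality of $\bm n$, $\bm n-\bm e_1$, $\bm n-\bm e_2$ and $\bm n-\bm e_1-\bm e_2$ (for the $A^{(1)}$ case, with the obvious analogue for $A^{(2)}$); all of these are automatic since Nikishin systems are perfect, so this imposes no extra constraint. The degree bound $\deg A^{(1)}_{\bm n}\le\deg A^{(1)}_{\bm n-\bm e_\ell}+1$ needed for Lemma~\ref{lem:wronskian interlacing} likewise follows from perfectness via \cite[Cor 23.1.1]{Ismail} exactly as in the proof of Theorem~\ref{thm:interlacing type I}. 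The only genuine obstacle is purely one of indexing: making sure the inequality $m_2\le m_1-1$ required by Lemma~\ref{lem:perturbation} (with $s=1$ for the complex/double transform) matches the stated hypotheses after the index is shifted by $-2\bm e_1$ or $-2\bm e_2$ and, in part (i), after the coordinate swap; I expect this to come out exactly as $n_1+1\le n_2$ in (i) and $n_1+1\ge n_2$ in (ii), but it is worth tracking the off-by-one with care, especially the edge cases where a component of the shifted index would become negative (which are excluded by the hypotheses $n_1\ne0$, $n_2\ne0$ inherited from Theorem~\ref{thm:NikishintypeI}).
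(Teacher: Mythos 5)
Your overall strategy coincides with the paper's: reduce to Theorem~\ref{thm:interlacing type I} via the double Christoffel transform identity $(x-z_0)^2 m_{\sigma_2}(x)=m_{\widetilde\sigma}(x)+Q(x)$ with $\deg Q\le 1$, invoke Lemma~\ref{lem:perturbation} with $s=1$ to get normality of $(n_1,n_2-2)$ exactly when $n_2-2\le n_1-1$, supply real-rootedness from Theorem~\ref{thm:NikishintypeI}, and handle (i) by the flip $(\mu_2,\mu_1)$. The index bookkeeping you worry about does come out as claimed.

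However, there is one genuine flaw: your restriction to $z_0\in\bbR\setminus\mathring{\Gamma}_2$ and the proposed patch for $z_0\in\mathring{\Gamma}_2$. Lemma~\ref{lem:wronskian interlacing} requires the Wronskian to be nonvanishing on \emph{all} of $\bbR$, and your claim that $W(A^{(2)}_{\bm n},A^{(2)}_{\bm n-\bm e_\ell};z_0)=0$ ``can only fail off $\mathring{\Gamma}_2$'' does not follow from the zero location of the two polynomials: the Wronskian of two real-rooted polynomials with all zeros in an interval can perfectly well vanish inside that interval (indeed, when interlacing fails it typically does), so the case $z_0\in\mathring{\Gamma}_2$ is not disposed of by your argument. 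The fix is that the restriction is unnecessary in the first place: unlike the single transform $(x-z_0)\sigma_2$ used in Theorem~\ref{thm:NikishintypeI} (which is sign-definite only for $z_0\notin\mathring{\Gamma}_2$), the measure $\widetilde\sigma=(x-z_0)^2\sigma_2$ is sign-definite with $\supp\widetilde\sigma\subseteq\Gamma_2$ for \emph{every} $z_0\in\bbR$, so $(\mu_1,m_{\widetilde\sigma}(x)\mu_1)$ is Nikishin, hence perfect, for every real $z_0$, and your computation goes through verbatim on all of $\bbR$. This is exactly how the paper argues; once you drop the case split, your proof matches it.
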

\begin{rem}\label{rem:oneMoreDiagonal}
    In particular, this shows that the polynomials $A_{\bm{n}}^{(1)}$ with $n_1 = n_2$ and $A_{\bm{n}}^{(2)}$ with $n_1 +2 = n_2$  are real-rooted, which improves the statement from Theorem~\ref{thm:NikishintypeI}, see Remark~\ref{rem:simulationsNikishin}. Interlacing also shows that all except potentially one of the real roots of each of these polynomials belong to $\mathring{\Gamma}_2$.
\end{rem}
\begin{proof}
Similarly to the proof of Theorem \ref{thm:NikishintypeI} (ii), we get 
\begin{equation}
    (\mu_1,(x-z_0)^2\mu_2) = (\mu_1,m_{\widetilde{\sigma}}(x)\mu_1 + Q(x)\mu_1), 
\end{equation}
with $\supp\, \widetilde{\sigma} \subseteq \supp\, \sigma$ and $\deg{Q} \leq 1$. Lemma \ref{lem:perturbation}  then gives us normality of this system for the indices with $n_2\le n_1-1$. Index $\bm{n}-2\bm{e}_2$ is then normal if $n_2-2\le n_1-1$ which together with Theorem \ref{thm:interlacing type I} proves (ii).

To prove (i), one can follow the same arguments as in the proof of Theorem \ref{thm:NikishintypeI} (i).
\end{proof}

\section{Wronskians of higher order}\label{ss:Wronskians}

It should come as no surprise now that normality of  Christoffel transforms of higher degree are connected to the zeros of the Wronskians of higher order. Christoffel transforms of multiple orthogonal polynomials were the topic of~\cite{ADMVA,BFM22,KV1,KV2,ManRoj}.

Let us fix an ``increasing path'' of multi-indices, that is a sequence $\{\bm{n}_s\}_{s=1}^{\ell}$ with $l\in \{1,2,\ldots\}$, $\bm{n}_s\in\bbN^r$, so that 
$$
\bm{n}_{s+1} = \bm{n}_{s} + \bm{e}_{j_s}
$$
for every $s=1,1,\ldots,l-1$, and for some $j_s \in \{1,\ldots,r\}$.

\begin{thm}
    Suppose each multi-index in $\{\bm{n}_s\}_{s=1}^{\ell}$ is $\bm{\mu}$-normal. Then 
    \begin{equation}
        \calZ[W(P_{\bm{n}_1},\ldots,P_{\bm{n}_\ell};x)]
   =
        \left\{
        z_0\in\bbC: \bm{n}_1 \mbox{ is not normal for } (x-z_0)^\ell \bm\mu \right\}.
    \end{equation}
\end{thm}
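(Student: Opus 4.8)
The plan is to generalize the two-polynomial argument from Theorem~\ref{thm:interlacing type II} (the $\ell=2$ case, where the Wronskian is $W(P_{\bm{n}+\bm{e}_j},P_{\bm{n}})$ and the relevant transform is $(x-z_0)^2\bm\mu$) to an arbitrary increasing path of length $\ell$. The key linear-algebra observation is that $W(P_{\bm{n}_1},\ldots,P_{\bm{n}_\ell};z_0)=0$ if and only if there is a nonzero coefficient vector $(a_1,\ldots,a_\ell)$ such that the polynomial $R(x):=\sum_{s=1}^\ell a_s P_{\bm{n}_s}(x)$ vanishes to order $\ge\ell$ at $z_0$, i.e.\ $R(x)=(x-z_0)^\ell Q(x)$ for some polynomial $Q$. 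This is just the statement that the columns of the Wronskian matrix are linearly dependent. First I would record this equivalence.

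Next, the degree bookkeeping. Since each $\bm{n}_s$ is normal, $\deg P_{\bm{n}_s}=|\bm{n}_s|=|\bm{n}_1|+s-1$, so the polynomials $P_{\bm{n}_1},\ldots,P_{\bm{n}_\ell}$ have $\ell$ distinct degrees and in particular $R\ne 0$ whenever $(a_1,\ldots,a_\ell)\ne 0$, and $\deg R\le |\bm{n}_1|+\ell-1$, hence $\deg Q\le |\bm{n}_1|-1=|\bm{n}_1-\bm{e}_{j}|$... more precisely $\deg Q<|\bm{n}_1|$. The forward direction then runs as in the $\ell=2$ case: from $R=(x-z_0)^\ell Q$ one checks that $Q$ satisfies the orthogonality relations defining normality of $\bm{n}_1$ with respect to $(x-z_0)^\ell\bm\mu$; indeed each $P_{\bm{n}_s}$ is orthogonal to $1,x,\ldots,x^{n_{1,k}-1}$ against $\mu_k$ when $n_{s,k}\ge n_{1,k}$, which holds for all $s$ along an increasing path starting at $\bm{n}_1$, so $R$ is orthogonal to $x^p$ against $\mu_k$ for $p=0,\ldots,n_{1,k}-1$, and dividing by $(x-z_0)^\ell$ transfers this to $Q$ against $(x-z_0)^\ell\mu_k$. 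Since $\deg Q<|\bm{n}_1|$, this contradicts normality of $\bm{n}_1$ for $(x-z_0)^\ell\bm\mu$ unless $W=0$ fails, proving the inclusion $\subseteq$.

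For the converse inclusion $\supseteq$: if $\bm{n}_1$ is not normal for $(x-z_0)^\ell\bm\mu$ there is a nonzero $Q$ with $\deg Q<|\bm{n}_1|$ satisfying those orthogonality relations. Then $(x-z_0)^\ell Q(x)$ is a polynomial of degree $<|\bm{n}_1|+\ell=\deg P_{\bm{n}_\ell}+1$, orthogonal to the appropriate monomials against each $\mu_k$. The idea is to subtract off multiples of $P_{\bm{n}_\ell},P_{\bm{n}_{\ell-1}},\ldots$ successively to kill leading coefficients and match orthogonality conditions, expressing $(x-z_0)^\ell Q$ in the span of $P_{\bm{n}_1},\ldots,P_{\bm{n}_\ell}$; the normality of each intermediate index along the path is what licenses each step, and one ends with $(x-z_0)^\ell Q=\sum_{s=1}^\ell a_s P_{\bm{n}_s}$ with the $a_s$ not all zero (since $Q\ne0$), giving $W(P_{\bm{n}_1},\ldots,P_{\bm{n}_\ell};z_0)=0$. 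The main obstacle I anticipate is exactly this reconstruction step: in the $\ell=2$ case one only needs "pick $a$ so that $(x-z_0)^2Q-aP_{\bm{n}+\bm{e}_j}$ has degree $\le|\bm{n}|$" and then identifies the remainder with $bP_{\bm{n}}$, but for general $\ell$ one must argue inductively that the span of $\{P_{\bm{n}_s}\}_{s=1}^\ell$ coincides with the space of polynomials of degree $\le|\bm{n}_1|+\ell-1$ satisfying the orthogonality constraints inherited along the path — equivalently, a dimension count showing this constraint space is exactly $\ell$-dimensional, which uses normality of all the $\bm{n}_s$ (and possibly a Remark~\ref{rem:normality}-type characterization applied repeatedly). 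I would carry out this dimension/induction argument carefully, as it is the one genuinely new ingredient beyond the $\ell=2$ proof.
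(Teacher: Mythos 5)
Your proposal is correct and follows essentially the same route as the paper, which omits the proof precisely because it is the step-by-step generalization of Theorem~\ref{thm:interlacing type II} that you describe: the equivalence of $W(P_{\bm{n}_1},\ldots,P_{\bm{n}_\ell};z_0)=0$ with $(x-z_0)^\ell \mid \sum_s a_s P_{\bm{n}_s}$ for some nonzero coefficient vector, the degree count $\deg Q<\abs{\bm{n}_1}$, and the transfer of the $\bm{n}_1$-orthogonality relations (valid for every $P_{\bm{n}_s}$ along an increasing path) to $Q$ against $(x-z_0)^\ell\bm{\mu}$. The one ingredient you flag as needing care --- that the polynomials of degree at most $\abs{\bm{n}_\ell}$ satisfying the $\bm{n}_1$-orthogonality relations form exactly the $\ell$-dimensional span of $P_{\bm{n}_1},\ldots,P_{\bm{n}_\ell}$ --- is indeed the correct completion and goes through by the successive leading-coefficient elimination you sketch, using normality of each $\bm{n}_s$ and Remark~\ref{rem:normality} at the final step.
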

We skip the proof since it wollows the same steps as the proof of Theorem~\ref{thm:interlacing type II}. This result immediately gives us the following corollary. 
\begin{cor}
    Let $\bm{\mu}$ be an Angelesco, AT, or Nikishin system, and $\ell$ be even. Then $W(P_{\bm{n}_1},\ldots,P_{\bm{n}_\ell};x)$ has no real zeros.
\end{cor}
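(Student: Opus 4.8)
The plan is to reduce this to the Wronskian criterion of the preceding theorem. Fix a path $\{\bm{n}_s\}_{s=1}^\ell$ of consecutive multi-indices, all $\bm{\mu}$-normal, with $\ell$ even. By that theorem,
\begin{equation*}
    \calZ[W(P_{\bm{n}_1},\ldots,P_{\bm{n}_\ell};x)]
    =
    \{z_0\in\bbC: \bm{n}_1 \mbox{ is not normal for } (x-z_0)^\ell\bm{\mu}\},
\end{equation*}
so it suffices to show that for every $z_0\in\bbR$ the index $\bm{n}_1$ \emph{is} normal for $(x-z_0)^\ell\bm{\mu}$. Since $\ell$ is even, write $\ell = 2m$ and observe $(x-z_0)^\ell\mu_j = (x-z_0)^{2m}\mu_j$, which is the $2m$-fold Christoffel transform of $\mu_j$ by a nonnegative polynomial.

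The key point is that each of the three classes is stable under multiplication of all measures by a common nonnegative polynomial factor. For an Angelesco system this is immediate: $(x-z_0)^{2m}\mu_j$ is still supported in $\Gamma_j$ and still of constant sign (the factor $(x-z_0)^{2m}\ge 0$ on $\bbR$ does not change the sign), so the disjointness hypothesis is untouched and Theorem~\ref{thm:Angelesco} gives normality of every index, in particular $\bm{n}_1$. For an AT system, as already noted in the proof of Corollary~\ref{cor:ATtypeII}, one simply replaces the reference measure $\mu$ in Definition~\ref{def:AT} by $(x-z_0)^{2m}\mu$: the weights $w_j$ are unchanged, the Chebyshev system~\eqref{eq:AT} is the same, and Theorem~\ref{thm:ATperfect} applies. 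For a Nikishin system $\bm{\mu}=\calN(\sigma_1,\ldots,\sigma_r)$, the argument of Theorem~\ref{thm:NikishintypeI} shows that multiplying by $(x-z_0)^{2m}$ with $z_0\in\bbR$ produces a system of the form $(\mu_1,\ldots)$ which, after peeling off polynomial corrections via~\eqref{eq:mChristoffel} and Lemma~\ref{lem:perturbation}, is again Nikishin and hence perfect; alternatively one cites the known perfectness of Nikishin systems after noting $(x-z_0)^{2m}\mu_j$ still fits the Nikishin structure with generators modified only by $\sigma_1 \mapsto (x-z_0)^{2m}\sigma_1$ when $z_0\notin\mathring\Gamma_1$, handling the general $z_0$ by the stripping/perturbation trick. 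In all three cases $\bm{n}_1$ is normal for $(x-z_0)^{2m}\bm{\mu}$, so $z_0\notin \calZ[W(P_{\bm{n}_1},\ldots,P_{\bm{n}_\ell};x)]$.

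Since this holds for every real $z_0$, the Wronskian has no real zeros, which is the claim. The main obstacle is the Nikishin case: unlike Angelesco and AT, multiplication by $(x-z_0)^{2m}$ does not land one back in a Nikishin system on the nose, and one must absorb the extra polynomial factor into a perturbation of the generating measure exactly as in the proof of Theorem~\ref{thm:NikishintypeI}, keeping track of the degree bound in Lemma~\ref{lem:perturbation} to ensure the index $\bm{n}_1$ still falls in the normal range. For the two purely real-rootedness-type consequences one could also invoke that an even-degree nonnegative Christoffel transform never destroys normality of \emph{any} index in a perfect system that is closed under such transforms, which is precisely what the three classes satisfy.
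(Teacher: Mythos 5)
Your argument is correct and is exactly what the paper means by calling the corollary immediate from the theorem: for even $\ell$ the factor $(x-z_0)^\ell$ is nonnegative on $\bbR$, so $(x-z_0)^\ell\bm{\mu}$ remains Angelesco, AT (with reference measure $(x-z_0)^\ell\mu$), or Nikishin, hence $\bm{n}_1$ stays normal and no real $z_0$ can be a zero of the Wronskian. The only simplification worth noting is that in the Nikishin case no perturbation lemma and no restriction to $z_0\notin\mathring{\Gamma}_1$ are needed: every $\mu_j$ has the form $m(x)\,d\sigma_1(x)$ for an appropriate nested $m$-function, so multiplying the \emph{whole} system by $(x-z_0)^{2m}$ is literally the replacement $\sigma_1\mapsto(x-z_0)^{2m}\sigma_1$, which is still a sign-definite measure in $\calM(\Gamma_1)$ for \emph{every} real $z_0$ precisely because the exponent is even, so the transformed system is Nikishin on the nose and hence perfect.
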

For AT systems this was shown by Zhang and Filipuk in~\cite{ZhaFil}, and for $r=1$ this goes back to the classical result of Karlin and Szeg\H{o}~\cite[Thm 1]{KarSze}.

A similar result can be stated for type I polynomials as follows. 

\begin{thm}
    Suppose each multi-index in $\{\bm{n}_s\}_{s=1}^{\ell}$ is $\bm{\mu}$-normal, $(\bm{n}_1)_1\ge 2$. Then 
    \begin{multline}
        \calZ[W(A_{\bm{n}_1}^{(1)},\ldots,A_{\bm{n}_\ell}^{(1)};x)] 
        \\
        =
        \left\{
        z_0 \in \bbC: \bm{n}_1 - \bm{e}_1 \mbox{ is not normal for } ((x-z_0)^\ell\mu_1,\mu_2,\dots,\mu_r)
        \right\}.
    \end{multline}
    Analogous statements hold for $A_{\bm{k}}^{(j)}$ for $j = 2,\dots,r$, if we instead transform the $j$-th measure.
\end{thm}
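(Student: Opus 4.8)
The plan is to mimic the proof of Theorem~\ref{thm:interlacing type I}, generalizing the passage from a pair of polynomials to an $\ell$-tuple and from one double Christoffel factor to an $\ell$-fold one. The starting observation is a linear-algebra characterization of the vanishing of the Wronskian: $W(A_{\bm{n}_1}^{(1)},\ldots,A_{\bm{n}_\ell}^{(1)};z_0)=0$ if and only if there is a nonzero linear combination $\sum_{s=1}^\ell a_s A_{\bm{n}_s}^{(1)}(x)$ that vanishes to order $\ge\ell$ at $z_0$, i.e.\ equals $(x-z_0)^\ell B^{(1)}(x)$ for some polynomial $B^{(1)}$. Here I would first record that $\{\bm A_{\bm{n}_s}\}_{s=1}^\ell$ are linearly independent: since the path is increasing, $|\bm n_s|$ is strictly increasing, so the functionals ``$k$-th moment'' with $k=|\bm n_1|-1,\ldots,|\bm n_\ell|-1$ separate them exactly as in~\eqref{eq:indep} (the combination $\sum a_s\bm A_{\bm{n}_s}$ pairs to $0$ against $x^{|\bm n_1|-2}$ but a suitable tail moment detects the top index). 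This independence guarantees that the linear combination above is genuinely nonzero whenever $(a_s)\ne 0$.

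Next I would set $\widehat{\bm\mu}^{(\ell)}:=((x-z_0)^\ell\mu_1,\mu_2,\dots,\mu_r)$ and translate the two directions. For the forward direction: given the combination $\sum_s a_s A_{\bm{n}_s}^{(1)}=(x-z_0)^\ell B^{(1)}$, form the vector $\bm B=(B^{(1)},\sum_s a_s A_{\bm{n}_s}^{(2)},\ldots,\sum_s a_s A_{\bm{n}_s}^{(r)})$. From $\deg A_{\bm{n}_s}^{(1)}\le (n_s)_1-1\le (n_\ell)_1-1$ and the factor $(x-z_0)^\ell$ one reads off $\deg B^{(1)}\le (n_\ell)_1-1-\ell$, which is exactly $((\bm n_1-\bm e_1)$-admissible after noting $(n_\ell)_1=(n_1)_1$ along a path that only increments coordinates $2,\ldots,r$ --- I should be careful here about whether the path increments the first coordinate; the clean statement needs $(n_s)_1=(n_1)_1$ for all $s$, OR one takes $\bm n_\ell$'s first coordinate, and the degree bookkeeping must be done against the largest available index. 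Assuming the intended reading (as in Theorem~\ref{thm:interlacing type I}, where the relevant transformed index is $\bm n-2\bm e_1$ and here it is $\bm n_1-\bm e_1$ with an $\ell$-fold transform), the degree and orthogonality relations for $\bm B$ against $\widehat{\bm\mu}^{(\ell)}$ up to order $|\bm n_1|-1-\ell$ are inherited verbatim from those of the $\bm A_{\bm{n}_s}$ up to order $|\bm n_1|-2$ (the extra $(x-z_0)^\ell$ absorbs $\ell$ powers, as in~\eqref{eq:typeIlocationProof2}), so Remark~\ref{rem:normality} yields non-normality of $\bm n_1-\bm e_1$ for $\widehat{\bm\mu}^{(\ell)}$. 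Conversely, given a nonzero witness $\bm B$ of non-normality of $\bm n_1-\bm e_1$ for $\widehat{\bm\mu}^{(\ell)}$, one builds $((x-z_0)^\ell B^{(1)},B^{(2)},\ldots,B^{(r)})$ and successively subtracts multiples $a_\ell\bm A_{\bm n_\ell},a_{\ell-1}\bm A_{\bm n_{\ell-1}},\ldots$ to kill, one at a time, the top $|\bm n_\ell|-1,|\bm n_\ell|-2,\ldots,|\bm n_1|-1$ moments (this is where the increasing path and the $\bm\mu$-normality of each $\bm n_s$ are used: each $\bm A_{\bm n_s}$ is the unique vector realizing its moment pattern), arriving at a vector satisfying all of $\bm A_{\bm n_1}$'s relations, hence equal to $\bm A_{\bm n_1}$ up to scaling; reading the first component gives $a_1A_{\bm n_1}^{(1)}=(x-z_0)^\ell B^{(1)}-\sum_{s\ge 2}a_sA_{\bm n_s}^{(1)}$, i.e.\ a nontrivial combination divisible by $(x-z_0)^\ell$, so $W=0$ at $z_0$.

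I expect the main obstacle to be exactly the degree/index bookkeeping in the iterated subtraction step: ensuring that at each stage the polynomial being subtracted has the right degree so that the partially-cleared vector still lies in the correct finite-dimensional space, and that the $\ell$ successive ``missing moment'' conditions can indeed be met one at a time (equivalently, that the relevant $\ell\times\ell$ coefficient matrix is triangular with nonzero diagonal --- which is precisely the linear independence statement combined with the strict monotonicity of $|\bm n_s|$). A secondary subtlety is the case where the path increments the first coordinate: then $(n_s)_1$ is not constant, and one must decide whether the theorem is stated with $\bm n_1-\bm e_1$ having first coordinate $(n_1)_1-1\ge 1$ (the hypothesis $(\bm n_1)_1\ge 2$) while degrees are controlled by $\deg A_{\bm n_s}^{(1)}\le (n_s)_1-1$; since $(n_s)_1\le (n_\ell)_1$ and $(n_\ell)_1$ could exceed $(n_1)_1$, one should argue using the bound at $\bm n_\ell$ and note the $\ell$ factors of $(x-z_0)$ still leave $\deg B^{(1)}$ within the $\bm n_1-\bm e_1$ budget provided the transform is placed on the first measure --- this matches the statement. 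Everything else is a routine transcription of the $\ell=2$ argument, so I would present it compactly, flagging these two points as the only places where care is needed, and otherwise referring back to the proof of Theorem~\ref{thm:interlacing type I}.
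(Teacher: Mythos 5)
Your overall strategy is the one the paper intends: the paper gives no separate argument for this theorem and implicitly defers to the proof of Theorem~\ref{thm:interlacing type I}, and your skeleton --- $W(A_{\bm{n}_1}^{(1)},\ldots,A_{\bm{n}_\ell}^{(1)};z_0)=0$ iff a nontrivial combination $\sum_s a_sA_{\bm{n}_s}^{(1)}$ is divisible by $(x-z_0)^\ell$, linear independence of the $\bm{A}_{\bm{n}_s}$ via the triangular matrix of moments $\big\langle \bm{A}_{\bm{n}_s},x^{|\bm{n}_t|-1}\big\rangle$, and the two-way translation through Remark~\ref{rem:normality} --- is exactly the right generalization of the $\ell=2$ case. (One small correction: the orthogonality range is not reduced to $|\bm{n}_1|-1-\ell$; the factor $(x-z_0)^\ell$ is absorbed into the measure, so the relations transfer for all $k=0,\ldots,|\bm{n}_1|-2$, which is precisely the range needed for an index of size $|\bm{n}_1|-1$.)

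The genuine gap is the degree bookkeeping that you flag but do not resolve, and you are looking at the wrong component. In the forward direction the witness vector is $\bm{B}=\big(B^{(1)},\sum_s a_sA_{\bm{n}_s}^{(2)},\ldots,\sum_s a_sA_{\bm{n}_s}^{(r)}\big)$, and for $j\ge 2$ one only has $\deg\sum_s a_sA_{\bm{n}_s}^{(j)}\le (\bm{n}_\ell)_j-1$, which exceeds the budget $(\bm{n}_1)_j-1$ of the index $\bm{n}_1-\bm{e}_1$ as soon as the path increments coordinate $j$; so $\bm{B}$ does not witness non-normality of $\bm{n}_1-\bm{e}_1$. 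Symmetrically, in the converse the lifted vector $((x-z_0)^\ell B^{(1)},B^{(2)},\ldots,B^{(r)})$ built from a $\bm{n}_1-\bm{e}_1$ witness has first-component degree up to $(\bm{n}_1)_1-2+\ell$, which exceeds $(\bm{n}_\ell)_1-1$ unless the path increments the first coordinate at every step, so it need not lie in $\mathrm{span}\{\bm{A}_{\bm{n}_1},\ldots,\bm{A}_{\bm{n}_\ell}\}$. The index for which both directions close with your argument is $\bm{n}_\ell-\ell\bm{e}_1$ (with hypothesis $(\bm{n}_\ell)_1\ge\ell$): then $\deg B^{(1)}\le(\bm{n}_\ell)_1-\ell-1$, $\deg B^{(j)}\le(\bm{n}_\ell)_j-1$, and $|\bm{n}_\ell-\ell\bm{e}_1|-1=|\bm{n}_1|-2$ all match, and the converse goes through because the space of vectors with degree profile $\bm{n}_\ell-\bm{1}$ satisfying orthogonality up to $|\bm{n}_1|-2$ has dimension exactly $\ell$ and equals $\mathrm{span}\{\bm{A}_{\bm{n}_s}\}$ by normality of $\bm{n}_\ell$. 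This is consistent with Theorem~\ref{thm:interlacing type I}, whose target index $\bm{n}-2\bm{e}_1$ is written relative to the \emph{larger} of the two indices; the form $\bm{n}_1-\bm{e}_1$ agrees with $\bm{n}_\ell-\ell\bm{e}_1$ only for paths moving purely in the first coordinate (and in the type~II analogue the distinction disappears because only the total degree is constrained there). You should state and prove the theorem with $\bm{n}_\ell-\ell\bm{e}_1$, or restrict the path, rather than leave the discrepancy as an unresolved caveat.
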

\begin{cor}
    Let $\bm{\mu}$ be an Angelesco system, and $\ell$ be even. Then, for any $j=1,\ldots,r$, the Wronskian $W(A_{\bm{n}_1}^{(j)},\ldots,A_{\bm{n}_\ell}^{(j)};x)$ has no real zeros.
\end{cor}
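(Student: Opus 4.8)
The plan is to read the corollary directly off the theorem above. That theorem identifies $\calZ[W(A_{\bm{n}_1}^{(j)},\ldots,A_{\bm{n}_\ell}^{(j)};x)]$ with the set of $z_0\in\bbC$ for which the index $\bm{n}_1-\bm{e}_j$ fails to be normal for the system obtained from $\bm{\mu}$ by replacing $\mu_j$ with $(x-z_0)^\ell\mu_j$. Hence it suffices to prove that this exceptional set contains no real point, i.e. that for every $z_0\in\bbR$ the index $\bm{n}_1-\bm{e}_j$ is normal for that transformed system. The remaining hypotheses of the theorem cause no trouble: normality of every $\bm{n}_s$ for $\bm{\mu}$ itself is automatic because Angelesco systems are perfect by Theorem~\ref{thm:Angelesco}, and the condition $(\bm{n}_1)_j\ge 2$ we simply carry over as a standing assumption (for $(\bm{n}_1)_j\le 1$ one of the polynomials in the Wronskian has degree $\le 0$, and such edge cases would need separate elementary inspection).

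The one point to check is that the transformed system is still Angelesco, and this is precisely where the parity of $\ell$ enters. Fix $z_0\in\bbR$; since $\ell$ is even, $(x-z_0)^\ell\ge 0$ on $\bbR$, so $(x-z_0)^\ell\mu_j$ is again a finite measure of constant sign, with all moments finite, and with infinite support contained in $\Gamma_j$ (at most the single point $z_0$ is removed from $\supp\mu_j$, which cannot make the support finite since it was infinite to begin with). The intervals $\Gamma_1,\ldots,\Gamma_r$ and the requirement that $\Gamma_i\cap\Gamma_k$ be empty or a single point are untouched, so $(\mu_1,\ldots,(x-z_0)^\ell\mu_j,\ldots,\mu_r)$ satisfies Definition~\ref{def:Angelesco}. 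By Theorem~\ref{thm:Angelesco} it is perfect, so in particular $\bm{n}_1-\bm{e}_j$ is normal for it. As $z_0\in\bbR$ was arbitrary, the exceptional set of the theorem meets $\bbR$ in the empty set, which is exactly the assertion of the corollary.

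I do not anticipate any genuine obstacle: the whole content is the elementary remark that a Christoffel transform by an even power preserves sign-definiteness, hence the Angelesco structure (for odd $\ell$ this fails, consistent with the fact that Wronskians of an odd number of consecutive orthogonal polynomials generically do have a real zero, going back to Karlin--Szeg\H{o}). It is worth noting, however, that unlike the type II case, this route does not deliver the analogous statement for AT or Nikishin systems: transforming a single measure of an AT system need not preserve the AT property, and for a Nikishin system the single-measure transform has to be unwound through the $m$-function, as in Theorem~\ref{thm:NikishintypeI}; hence only the Angelesco case is stated here.
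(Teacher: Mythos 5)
Your proof is correct and is essentially the argument the paper intends (the paper leaves this corollary unproved, treating it as immediate from the preceding theorem): for even $\ell$ and real $z_0$ the factor $(x-z_0)^\ell$ is nonnegative, so the transformed system remains Angelesco, hence perfect by Theorem~\ref{thm:Angelesco}, and the exceptional set of the theorem therefore avoids $\bbR$. Your observation that the hypothesis $(\bm{n}_1)_j\ge 2$ must be carried over as an implicit standing assumption is a fair point about the corollary's statement, not a gap in your argument.
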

One can show a similar statement for Nikishin systems for indices in certain cones, similarly as in Theorem~\ref{thm:NikishintypeInterlacingI}.



\bibsection

\begin{biblist}[\small]

\bib{Angelesco}{article}{
   author={Angelesco, A.},
   title={Sur deux extensions des fractions continues alg\'{e}briques},
   language={French},
   journal={C. R. Acad. Sci. Paris },
   volume={168},
   date={1919},
   pages={262-263},
}

\bib{Aptekarev}{article}{
   author={Aptekarev, A.I.},
   title={Multiple orthogonal polynomials},
   journal={Journal of Computational and Applied Mathematics},
   volume={99},
   year={1998},
   pages={423-447},
}

\bib{ADY20}{article}{
   author={Aptekarev, A.I.},
   author={Denisov, S.A.},
   author={Yattselev, M.L.},
   title={Self-adjoint Jacobi matrices on trees and multiple orthogonal polynomials},
   journal={Transactions of the American Mathematical Society},
   volume={373},
   number={2},
   year={2020},
   pages={875-917},
}

\bib{ADMVA}{article}{
    AUTHOR = {Aptekarev, A.I.},
    AUTHOR = {Derevyagin, M.},
    AUTHOR = {Miki, H.},
    AUTHOR = {Van Assche, W.},
     TITLE = {Multidimensional {T}oda lattices: continuous and discrete time},
   JOURNAL = {SIGMA Symmetry Integrability Geom. Methods Appl.},
    VOLUME = {12},
      YEAR = {2016},
     PAGES = {Paper No. 054, 30},
}

\bib{AKVI}{article}{
   author={Aptekarev, A. I.},
   author={Kaliaguine, V.},
   author={Van Iseghem, J.},
   title={The genetic sums' representation for the moments of a system of
   Stieltjes functions and its application},
   journal={Constr. Approx.},
   volume={16},
   date={2000},
   number={4},
   pages={487--524},
}

\bib{ALR}{article}{
   author={Aptekarev, A. I.},
   author={L\'opez Lagomasino, G.},
   author={Rocha, I. A.},
   title={Asymptotic behavior of the ratio of Hermite-Pad\'e{} polynomials
   for Nikishin systems},
   language={Russian, with Russian summary},
   journal={Mat. Sb.},
   volume={196},
   date={2005},
   number={8},
   pages={3--20},
   translation={
      journal={Sb. Math.},
      volume={196},
      date={2005},
      number={7-8},
      pages={1089--1107},
      issn={1064-5616},
   },
}

\bib{ACVA}{article}{
	author={Arves\'u, J.},
	author={Coussement, J.},
	author={Van Assche, W.},
	title={Some discrete multiple orthogonal polynomials},
	booktitle={Proceedings of the Sixth International Symposium on Orthogonal
		Polynomials, Special Functions and their Applications (Rome, 2001)},
	journal={J. Comput. Appl. Math.},
	volume={153},
	date={2003},
	number={1-2},
	pages={19--45}
}

\bib{BBFL}{article}{
	author={Branquinho, A.},
	author={Bustamante, J.},
	author={Foulqui\'e{} Moreno, A.},
	author={L\'opez Lagomasino, G.},
	title={Normal indices in Nikishin systems},
	journal={J. Approx. Theory},
	volume={124},
	date={2003},
	number={2},
	pages={254--262},
}

\bib{BFM22}{article}{
   author={Branquinho, A.},
   author={Foulqui\'{e}-Moreno, A.},
   author={Ma\~{n}as, M.},
   title={Multiple orthogonal polynomials: Pearson equations and Christoffel
   formulas},
   journal={Anal. Math. Phys.},
   volume={12},
   date={2022},
   number={6},
   pages={Paper No. 129, 59},
}

\bib{Chihara}{book}{
   author={Chihara, T.S.},
   title={An Introduction to Orthogonal Polynomials},
   isbn={9780486479293},
   series={Mathematics and Its Applications},
   volume={13},
   publisher={Gordon and Breach Science Publishers, Inc.},
   year={1978},
}

\bib{CouVA}{article}{
   author={Coussement, E.},
   author={Van Assche, W.},
   title={Multiple orthogonal polynomials associated with the modified
   Bessel functions of the first kind},
   journal={Constr. Approx.},
   volume={19},
   date={2003},
   number={2},
   pages={237--263},
   issn={0176-4276},
}

\bib{DenYat}{article}{
   author={Denisov, S. A.},
   author={Yattselev, M. L.},
   title={Spectral theory of Jacobi matrices on trees whose coefficients are
   generated by multiple orthogonality},
   journal={Adv. Math.},
   volume={396},
   date={2022},
   pages={Paper No. 108114, 79}
}

\bib{dosSan}{article}{
   author={dos Santos, E. J. C.},
   title={Monotonicity of zeros of Jacobi-Angelesco polynomials},
   journal={Proc. Amer. Math. Soc.},
   volume={145},
   date={2017},
   number={11},
   pages={4741--4750}
}

\bib{DriSta94}{article}{
	author={Driver, K.},
	author={Stahl, H.},
	title={Normality in Nikishin systems},
	journal={Indag. Math. (N.S.)},
	volume={5},
	date={1994},
	number={2},
	pages={161--187},
}

\bib{FidIllLop04}{article}{
	author={Fidalgo Prieto, U.},
	author={Ill\'an, J.},
	author={L\'opez Lagomasino, G.},
	title={Hermite-Pad\'e{} approximation and simultaneous quadrature
		formulas},
	journal={J. Approx. Theory},
	volume={126},
	date={2004},
	number={2},
	pages={171--197},
}

\bib{FidLop02}{article}{
	author={Fidalgo Prieto, U.},
	author={L\'opez Lagomasino, G.},
	title={On perfect Nikishin systems},
	journal={Comput. Methods Funct. Theory},
	volume={2},
	date={2002},
	number={2},
	pages={415--426},
}

\bib{FidLop11}{article}{
	author={Fidalgo Prieto, U.},
	author={L\'opez Lagomasino, G.},
	title={Nikishin systems are perfect},
	journal={Constr. Approx.},
	volume={34},
	date={2011},
	number={3},
	pages={297--356},
}

\bib{FidLop11b}{article}{
	author={Fidalgo Prieto, U.},
	author={L\'opez Lagomasino, G.},
	title={Nikishin systems are perfect. The case of unbounded and touching
		supports},
	journal={J. Approx. Theory},
	volume={163},
	date={2011},
	number={6},
	pages={779--811},
}

\bib{FLLS}{article}{
   author={Fidalgo Prieto, U.},
   author={L\'opez Garc\'ia, A.},
   author={L\'opez Lagomasino, G.},
   author={Sorokin, V. N.},
   title={Mixed type multiple orthogonal polynomials for two Nikishin
   systems},
   journal={Constr. Approx.},
   volume={32},
   date={2010},
   number={2},
   pages={255--306},
}

\bib{FMM}{article}{
   author={Fidalgo Prieto, U.},
   author={Peralta, S. Medina},
   author={M\'inguez Ceniceros, J.},
   title={Mixed type multiple orthogonal polynomials: perfectness and
   interlacing properties of zeros},
   journal={Linear Algebra Appl.},
   volume={438},
   date={2013},
   number={3},
   pages={1229--1239},
   issn={0024-3795},
}

\bib{HanVA}{article}{
  author={Haneczok, M.},
  author={Van Assche, W.},
  title={Interlacing properties of zeros of multiple orthogonal polynomials},
  journal={Journal of Mathematical Analysis and Applications},
  year={2012},
  volume={389},
  pages={429-438}
}

\bib{Ismail}{book}{
	author={Ismail, M.E.H.},
	title={Classical and Quantum Orthogonal
		Polynomials in One Variable},
	isbn={9780521782012},
	series={Encyclopedia of Mathematics and its Applications},
	Volume={98},
	publisher={Cambridge University Press},
	year={2005},
}


\bib{KarSze}{article}{
   author={Karlin, S.},
   author={Szeg\"o, G.},
   title={On certain determinants whose elements are orthogonal polynomials},
   journal={J. Analyse Math.},
   volume={8},
   date={1960/61},
   pages={1--157},
   issn={0021-7670},
}

\bib{MarMor24}{article}{
  author={Mart\'{i}nez-Finkelshtein, A.},
  author={Morales, R.},
  title={Interlacing and monotonicity of zeros of Angelesco--Jacobi polynomials},
  journal={Pure and Applied Functional Analysis},
  pages={to appear},
  year={2024}
}

\bib{MarMorPer}{article}{
   author={Mart\'{i}nez-Finkelshtein, A.},
  author={Morales, R.},
  author={Perales, D.},
   title={Zeros of generalized hypergeometric polynomials via finite free convolution. Applications to multiple orthogonality},
   journal={Constr Approx},
   year={2025},
   DOI={https://doi.org/10.1007/s00365-025-09703-w},
}

\bib{KarStu}{book}{
	author={Karlin, S.},
	author={Studden, W.J.},
	title={Tchebycheff systems: With applications in analysis and statistics},
	series={Pure and Applied Mathematics},
	volume={Vol. XV},
	publisher={Interscience Publishers John Wiley \& Sons, New
		York-London-Sydney},
	date={1966},
	pages={xviii+586}
}

\bib{Ker70}{article}{
   author={Kershaw, D.},
   title={A note on orthogonal polynomials},
   journal={Proc. Edinburgh Math. Soc. (2)},
   volume={17},
   date={1970},
   pages={83--93},
   issn={0013-0915},
}

\bib{KNikishin}{article}{
   author={Kozhan, R.},
   title={Nikishin systems on the unit circle},
   journal={},
   volume={},
   date={},
   number={},
   pages={arXiv:2410.20813},
}

\bib{KV1}{article}{
   author={Kozhan, R.},
   author={Vaktn\"{a}s, M.},
   title={Christoffel transform and multiple orthogonal polynomials},
   journal={},
   volume={},
   date={},
   number={},
   pages={arXiv:2407.13946},
}

\bib{KV2}{article}{
   author={Kozhan, R.},
   author={Vaktn\"{a}s, M.},
   title={Determinantal formulas for rational perturbations of
multiple orthogonality measures},
   journal={},
   volume={},
   date={},
   number={},
   pages={arXiv:2407.13961},
}

\bib{KVMLOPUC}{article}{
   author={Kozhan, R.},
   author={Vaktn\"{a}s, M.},
   title={Angelesco and AT systems on the unit circle: normality and Hermite--Pad\'{e} problem},
   journal={},
   volume={},
   date={},
   number={},
   pages={arXiv:2410.12094},
}

\bib{Kui}{article}{
   AUTHOR = {Kuijlaars, A.B.J.},
     TITLE = {Multiple orthogonal polynomial ensembles},
    JOURNAL = {Recent trends in orthogonal polynomials and approximation
              theory, Contemp. Math., Amer. Math. Soc., Providence, RI},
    VOLUME = {507},
     PAGES = {155--176},
      YEAR = {2010},
      ISBN = {978-0-8218-4803-6},
}

\bib{Lop21}{article}{
   author={L\'opez-Lagomasino, G.},
   title={An introduction to multiple orthogonal polynomials and
   Hermite-Pad\'e{} approximation},
   conference={
      title={Orthogonal polynomials: current trends and applications},
   },
   book={
      series={SEMA SIMAI Springer Ser.},
      volume={22},
      publisher={Springer, Cham},
   },
   date={2021},
   pages={237--271},
}

\bib{LMF}{article}{
   author={L\'opez Lagomasino, G.},
   author={Medina Peralta, S.},
   author={Fidalgo Prieto, U.},
   title={Hermite-Pad\'e{} approximation for certain systems of meromorphic
   functions},
   language={Russian, with Russian summary},
   journal={Mat. Sb.},
   volume={206},
   date={2015},
   number={2},
   pages={57--76},
   issn={0368-8666},
   translation={
      journal={Sb. Math.},
      volume={206},
      date={2015},
      number={1-2},
      pages={225--241},
      issn={1064-5616},
   },
}

\bib{ManRoj}{article}{
  author={Ma\~{n}as, M.},
 author={Rojas, M.},
  title={General Christoffel Perturbations for Mixed Multiple Orthogonal Polynomials},
   pages={arXiv:2405.11630},
}

\bib{Applications}{article}{
   author={Martínez-Finkelshtein, A.}
   author={Van Assche, W},
   title={WHAT IS...A Multiple Orthogonal Polynomial?},
   journal={Notices of the American Mathematical Society},
   volume={63},
   year={2016},
   pages={1029-1031},
}

\bib{Nik80}{article}{
	author={Nikishin, E. M.},
	title={Simultaneous Pad\'e{} approximants},
	journal={Mat. Sb. (N.S.)},
	volume={113(155)},
	date={1980},
	number={4(12)},
	pages={499--519, 637},
}

\bib{Nikishin}{book}{
	author={Nikishin, E.M.},
	author={Sorokin, V.N.},
	title={Rational Approximations and Orthogonality},
	isbn={9780821845455},
	series={Translations of Mathematical Monographs},
	Volume={92},
	publisher={American Mathematical Society},
	year={1991},
}

\bib{OPUC1}{book}{
   author={Simon, B.},
   title={Orthogonal Polynomials on the Unit Circle, Part 1: Classical Theory},
   isbn={0-8218-3446-0},
   series={Colloquium Lectures},
   Volume={54}
   publisher={American Mathematical Society},
   year={2004},
}

\bib{SimonL2}{book}{
   author={Simon, B.},
   title={Szeg\H o's theorem and its descendants},
   series={M. B. Porter Lectures},
   note={Spectral theory for $L^2$ perturbations of orthogonal polynomials},
   publisher={Princeton University Press, Princeton, NJ},
   date={2011},
   pages={xii+650}
}

\bib{SzegoBook}{book}{
   author={Szeg\H o, G.},
   title={Orthogonal polynomials},
   series={American Mathematical Society Colloquium Publications},
   volume={Vol. XXIII},
   edition={4},
   publisher={American Mathematical Society, Providence, RI},
   date={1975},
   pages={xiii+432},
}

\bib{NNRR}{article}{
	author={Van Assche, W.},
	title={Nearest neighbor recurrence relations for multiple
		orthogonal polynomials},
	journal={Journal of Approximation Theory},
	volume={163},
	year={2011},
	pages={1427-1448},
}

\bib{ZhaFil}{article}{
   author={Zhang, L.},
   author={Filipuk, G.},
   title={On certain Wronskians of multiple orthogonal polynomials},
   journal={SIGMA Symmetry Integrability Geom. Methods Appl.},
   volume={10},
   date={2014},
   pages={Paper 103, 19pp},
}

\end{biblist}

\end{document}